\documentclass[11pt,margin=3pt]{article}
\usepackage[hypertexnames=false]{hyperref}
\usepackage{color}
\usepackage{latexsym}
\usepackage{dsfont}
\usepackage{appendix}
\usepackage{amssymb}
\usepackage{subcaption}
\usepackage[margin=.9in]{geometry}
\usepackage{float}
\usepackage{algorithm,algorithmic}
\usepackage[numbers]{natbib}
\usepackage{amsmath, amsfonts,amssymb,euscript,array,enumerate,amsfonts,mathrsfs}
\usepackage[dvipsnames]{xcolor}
\usepackage{amsthm}
\newtheorem{Theorem}{Theorem}[section]

\newtheorem{Assumption}[Theorem]{Assumption}

\newtheorem{Remark}[Theorem]{Remark}

\usepackage[T1]{fontenc}
\usepackage[latin1]{inputenc}
\usepackage{hhline}
\usepackage{graphicx}
\usepackage{verbatim}
\usepackage{eurosym}
\usepackage{dsfont}
\usepackage{footmisc}
\allowdisplaybreaks

\DeclareMathOperator{\Tr}{Tr}

\makeatletter
\@addtoreset{equation}{section}

\newcommand*{\rom}[1]{\expandafter\@slowromancap\romannumeral #1@}
\makeatother
\newcommand{\vertiii}[1]{{\left\vert\kern-0.25ex\left\vert\kern-0.25ex\left\vert #1 
    \right\vert\kern-0.25ex\right\vert\kern-0.25ex\right\vert}}
\usepackage{blindtext}
\usepackage{ragged2e}

\newcommand{\A}{\widehat{\Sigma}_{t-1}^E}
\newcommand{\B}{\widehat{\Sigma}_{t-1}^P}
\newcommand{\D}{\widehat{\Sigma}_{t-1}^{(P,E)}}
\newcommand{\C}{\widetilde{\Sigma}_{t-1}^{(P,E)}}
\newcommand{\Ct}{(\widetilde{\Sigma}_{t-1}^{(P,E)})^\top}
\newcommand{\F}{\B-\C}
\newcommand{\G}{\A-\Ct}

\begin{document}
\title{Linear-quadratic Gaussian Games with Asymmetric Information:\\ Belief Corrections Using the Opponents Actions\footnote{We thank Sam Cohen and Yanwei Jia for helpful discussions and comments on the paper.}}

\author{
Ben Hambly
\thanks{Mathematical Institute, University of Oxford. \textbf{Email:} hambly@maths.ox.ac.uk }
\and
Renyuan Xu \thanks{Epstein Department of Industrial and Systems Engineering, University of Southern California. \textbf{Email:} renyuanx@usc.edu}
\and
Huining Yang\thanks{ 
Department of Operations Research and Financial Engineering, Princeton University. \textbf{Email:} hy5564@princeton.edu}}
\maketitle
\vspace{-0.2cm}
\begin{abstract}
We consider two-player non-zero-sum linear-quadratic Gaussian games in which both players aim to minimize a quadratic cost function while controlling a linear and stochastic state process {using linear policies}. The system is partially observable with asymmetric information available to the players. In particular, each player has a private and noisy measurement of the state process but can see the history of their opponent's actions. 
The challenge of this asymmetry is that it introduces correlations into the players' belief processes for the state and leads to circularity in their beliefs about their opponents beliefs.
We show that by leveraging the information available through their opponent's actions, both players can enhance their state estimates and improve their overall outcomes. In addition, we provide a closed-form solution for the Bayesian updating rule of their belief process. We show that there is a Nash equilibrium which is linear in the estimation of the state and with a value function incorporating terms that arise due to errors in the state estimation. We illustrate the results through an application to bargaining which demonstrates the value of these information corrections.
\end{abstract}

\section{Introduction}

In the realm of game theory, two-player bargaining games have been extensively studied due to their practical implications in various domains \cite{rubinstein1982perfect,fu2018profit,zheng2010applying,leng2009allocation}. The types of game of interest here typically occur in negotiations between large organizations. 
One example is the negotiation of extraction rights between a host country and a foreign mining company. The host country seeks to maximize its economic benefits while ensuring sustainable resource management, while the foreign company aims to secure profitable extraction agreements. 
Another relevant scenario is the negotiation of rebates on pharmaceutical purchases between a manufacturer and a retail chain. The manufacturer desires to maximize its sales volume and market share, while the retail chain seeks competitive pricing to attract customers. To capture these situations we consider negotiations over a good whose value changes through time and assume that there are a fixed number of rounds of negotiation, where at each round both players present their bids simultaneously. In practice, both players possess limited and potentially asymmetric information about the good under negotiation. This provides compelling motivation for the mathematical modeling and analysis of two-player stochastic games where there is asymmetric information, which can shed light on the negotiation dynamics, identify the key parameters affecting the bargaining outcome, and provide insights into the search for information.


In the game setting where the underlying state dynamics are only partially observed, the players' observation histories expand over time, leading to strategies with {\it expanding domains} due to the dynamic and sequential nature of such games. To address this issue, a commonly employed technique is to summarize the time-expanding histories into {\it sufficient statistics}.  This enables the application of the Dynamic Programming Principle (DPP), allowing such sequential decision-making problems to be solved through nested sub-optimization problems using the Bellman equation. The sufficient statistics may vary from player to player, depending on the observations available to each player. In the case of symmetric information in extensive form games, the concept of a Markov perfect equilibrium has been introduced \cite{maskin2001markov}, in which the players' strategies are determined solely by past events that are relevant to their payoffs, rather than the entire history. {See also \cite{doraszelski2010theory,pakes2001stochastic}.} However, for games with asymmetric information, identifying the appropriate sufficient statistics poses a significant challenge.


A quantity commonly used as the sufficient statistic is a {\it posterior belief} of the state dynamics, constructed using the available observations and Bayes formula. 
The main difficulty in this context is the emergence of {\it private beliefs}, the fact that different agents in the system may have different (private) observations about the same unknown quantity, which introduces {\it dependence} among the agents' beliefs. One way to avoid this problem is to consider models in which private beliefs either do not exist (such as considering symmetric information games, or asymmetric but independent observations \cite{vasal2016signaling,vasal2018systematic}), or, if they do exist, they are not taken into account in the agents' strategies (see for example the concept of ``public perfect equilibrium'' \cite{abreu1990toward}).  {Another closely related line of work considers common-information based Markov perfect equilibria, which breaks the history into the common and private parts. This idea was first introduced in \cite{nayyar2013common} for finite games and then generalized in \cite{gupta2014common} to linear-quadratic games. See \cite{ouyang2016dynamic} for an extension to a more general setting and \cite{gupta2016dynamic} for an application to cyber-physical systems.  One key assumption made in this approach is that players' posterior beliefs about
the system state, conditioned on their common information,
are independent of the strategies used by the players in the
past.  This
decouples the sequential rationality and belief consistency, resulting in
a simplification in calculating the equilibria, and obviating the need to define (possibly correlated) private beliefs.} 

To better understand the challenge posed by having private beliefs, let us consider the following simplified scenario. We have two players, $P$ and $E$, who collect private information about an unknown variable $\Theta$ 
at each time step. Player $P$ acts based on her own private belief about $\Theta$, and expects that Player $E$ will do the same. Although both players can observe each other's actions, they cannot observe each other's private beliefs. This means that Player $P$ must form a belief about Player $E$'s private belief in order to interpret the actions from player $E$, and take this into account when making her own decisions. However, this creates a chain of ``belief about belief'' that must be taken into consideration, which extends as long as each player's beliefs remain private. Due to the symmetry of the information structure, Player $E$ must do the same. Thus, Player $P$ needs to form beliefs about beliefs about beliefs of Player $E$, creating an increasingly complex hierarchy of beliefs. This chain only stops when a belief in one step becomes a public function of the beliefs in the previous steps.

Indeed, stochastic games with private beliefs have been identified as an open problem in the past decade \cite{pachter2017lqg,pachter2014informational}. There have been a few attempts to address this challenging issue. In \cite{heydaribeni2020structured} a model with an unknown (but static) state $\Theta$ of the world is considered, where each player has a private noisy observation of $\Theta$ at timestamp $t$. The private observations  are independent  given $\Theta$.
The authors specialize this setting to the case of a linear-quadratic Gaussian non-zero-sum game where $\Theta$ is a Gaussian variable and players' observations are generated through a linear Gaussian model from $\Theta$. The main contribution of the paper is to show that, due to the conditional independence of the private signals given $\Theta$, the private belief chain stops at the second step and players' beliefs about others' beliefs are public functions of their own beliefs (the first step beliefs). In addition, the authors show that the perfect Bayesian equilibrium (PBE) for their model is a linear function of a players' private Kalman filter estimator. For a more general setting when the (partially) unknown quantity is the underlying stochastic process itself, \cite{pachter2017lqg} considers a zero-sum two-player game formulation in which each agent observes a private linear signal of the underlying process with non-degenerate Gaussian noise. The private signals at each time step are independent conditioned on the true value of the state at the same time. Similarly, the author shows that the private belief chain stops at the second step. However, the sufficient statistics developed in \cite{pachter2017lqg} are not completely correct, impeding the application of the DPP and making it impossible to derive the Nash equilibrium solutions. We will give a detailed technical discussion in Section~\ref{sec:tower_property}.

\vspace{-0.2cm}
\paragraph{Our Contributions.} Motivated by bargaining games, we consider a two-player non-zero-sum game  with linear dynamics and quadratic cost functions. Both players cannot directly observe the dynamics but instead rely on private signal processes that are linear in the state process with some additive Gaussian noise. In addition, both players adopt linear policies to control the partially observable dynamics and each player can also observe the past actions taken by the other player.  


The novelty of our approach is that we show how to use the opponent's actions as additional information to correct the state estimate of the previous time step via a modified Kalman filtering procedure. For the  partially observable setting, we formally derive the updating rule for the belief process and provide an explicit formula for the projection of the unknown state process onto the filtration generated by the information flow that is available to each player in Theorem \ref{thm:sufficient_statistics}. In addition, we prove a conditional version of the DPP that works in the game setting in Theorem \ref{thm:dpp}. With the sufficient statistics and DPP in hand, we are able to derive a Nash equilibrium solution for the two-player game in Theorem \ref{general_result}. Finally, in Section~\ref{sec:more_general}, we extend the above-mentioned results to a more general setting where part of the state is fully observable and part of the state is partially observable through the signal process. In the mixed partially and fully observable setting, we establish parallel findings-- specifically, the updating rule for the  sufficient statistics in Theorem \ref{thm:sufficient_statistics_gnr} and the Nash equilibrium in Theorem \ref{thm:NE_mixed}. To the best of our knowledge, this is the first work that rigorously characterizes the equilibrium solution under private belief when the underlying state dynamics are partially observable. To demonstrate the  performance of our framework, we conclude the paper in Section~\ref{sec:experiment} by discussing a bargaining game example and give a numerical illustration of the effects of using information corrections.

\section{Problem Set-up}\label{sec:set-up}


We consider a general setting for games with two players, $P$ and $E$, under partial observations and asymmetric information. 
The joint dynamics of the state process $x_t\in \mathbb{R}^n$ takes a linear form $(0\leq t \leq T-1)$:
\begin{equation}\label{eq:linear_dynamics}
 x_{t+1} = A_t x_t +B^P_t u^P_t + B^E_t u^E_t +\Gamma_t w_t,
\end{equation}
with initial value $x_0 = x$, and the controls of $P$ and $E$ are $u^P_t\in \mathbb{R}^m$ and $u^E_t\in\mathbb{R}^k$, respectively. Here, for each $t$, the process noise $w_t\in \mathbb{R}^d$ is an i.i.d. sample from $\mathcal{N}(0,W)$ with $W\in \mathbb{R}^{d\times d}$ and we have the model parameters $A_t\in \mathbb{R}^{n\times n},$ $B^P_t\in \mathbb{R}^{n\times m}$, $B^E_t\in \mathbb{R}^{n\times k}$, and $\Gamma_t\in \mathbb{R}^{n\times d}$.

\vspace{-0.2cm}
\paragraph{Information Structure.} At the time $t=0$, player $P$ is not able to observe $x_0$ but instead believes that the initial state is drawn from a Gaussian distribution. Namely, from the view point of player $P$,
\begin{eqnarray}\label{eq:initial_belief_P}
    x_0\sim \mathcal{N}(\widehat{x}_0^P,W^P_0),
\end{eqnarray}
where $\widehat{x}_0^P $ is their own initial constant and $ W^P_0$ is a known constant covariance matrix.  After that, player $P$ observes the following noisy state signal (or measurement) $z_t^P\in \mathbb{R}^p$:
    \begin{eqnarray}\label{eq:observation_P}
    z_{t+1}^P = H_{t+1}^P\,x_{t+1} + \,w^P_{t+1},\quad w^P_{t+1}\sim \mathcal{N}(0,G^P),\quad t=0,1,\cdots,T-1,
    \end{eqnarray}
    with $\{w^P_{t}\}_{t=0}^{T-1}$ a sequence of i.i.d. random variables. Here $G^P\in\mathbb{R}^{p\times p}$  and $H_{t+1}^P\in \mathbb{R}^{p\times n}$.
    Similarly, player $E$ believes that the initial state is drawn from a Gaussian distribution:
\begin{eqnarray}\label{eq:initial_belief_E}
    x_0\sim \mathcal{N}(\widehat{x}_0^{E},W^E_0),
\end{eqnarray}
    with their own initial constant $\widehat{x}_0^E $ and known constant $ W^E_0$. Thereafter player $E$ observes the following noisy state signal $z_t^E\in \mathbb{R}^q$:
    \begin{eqnarray}\label{eq:observation_E}
    z_{t+1}^E = H_{t+1}^E\,x_{t+1} +\,w^E_{t+1},\quad w^E_{t+1}\sim \mathcal{N}(0,G^E),\quad t=0,1,\cdots,T-1.
    \end{eqnarray}
    with $\{w^E_{t}\}_{t=0}^{T-1}$ a sequence of i.i.d. random variables. For simplicity we assume that $\{w^E_{t}\}_{t=0}^{T-1}$ are independent from $\{w^P_{t}\}_{t=0}^{T-1}$. In addition, $G^E\in \mathbb{R}^{q\times q}$  and $H_{t+1}^E\in \mathbb{R}^{q\times n}$.

We follow \cite{pachter2017lqg} to define games with perfect, imperfect, and partial observations:
\begin{itemize}
    \item[(a)]
If the observation matrices $H_t^P$ and $H_t^E$ are both the identity matrix and there is no measurement noise, that is, the covariance matrices $G^P = 0$ and $G^E = 0$, we have a game with {\it full (or perfect) observation}. In other words, the players' observation is $z_t^E=z_t^P = x_t$ .
\item[(b)] If the observation matrices $H_t^P$ and $H_t^E$ are the identity matrix and there is measurement noise, we
have a game with {\it imperfect observation}. Namely, the players' observations are 
of the form 
\begin{equation*}
z^P_t =x_t +w_t^P,\quad z^E_t = x_t +w_t^E.
\end{equation*}
\item[(c)] If the observation matrices are not the identity and there is measurement noise, as in 
\begin{equation*}
z^P_t = H^P_tx_t +w_t^P, \quad
z^E_t = H^E_t x_t +w_t^E,
\end{equation*}
we have a game with {\it partial observation}.
 \end{itemize}

Both players make their decisions based on the public and private information available to them.  We write $\mathcal{Z}_t^P =\{z_s^P\}_{s=1}^t$ and $\mathcal{Z}_t^E =\{z_s^E\}_{s=1}^t$ for the private signals players P and E receive up to time $t$ $(1\leq t \leq T)$, respectively. Let $\mathcal{U}^P_t=\{u^P_s\}_{s=1}^t$ and $\mathcal{U}^E_t=\{u^E_s\}_{s=1}^t$ denote the control history of player $P$ and player $E$ up to time $t$, respectively.

We assume $\mathcal{H}^P_t$ is the information (or history) available to player $P$ and $\mathcal{H}^E_t$ is the information available to player $E$  for them to make decisions at time $t$, where $\mathcal{H}^P_t$ and $\mathcal{H}^E_t$ follow:
\begin{eqnarray}
    \mathcal{H}^P_t = \{\widehat{x}_0^P,W_0^P,W_0^E\}\cup\mathcal{Z}_t^P \cup \mathcal{U}^P_{t-1} \cup \mathcal{U}^E_{t-1},\quad 
    \mathcal{H}^E_t =\{\widehat{x}_0^E,W_0^P,W_0^E\}\cup \mathcal{Z}_t^E \cup \mathcal{U}^P_{t-1} \cup \mathcal{U}^E_{t-1}.\label{seller_information_general}\label{buyer_information_general}
\end{eqnarray}
    Note that the covariance matrices  $\{W_0^P,W_0^E\}$ are known to both players. 
    In the posterior update of a Gaussian distribution, sufficient statistics involve both the mean and covariance matrix. Knowing $\{W_0^P,W_0^E\}$ is essential for both players to update their posterior covariance matrices. In addition, we highlight that both players know all the model parameters.
   
\vspace{-0.2cm}
\paragraph{Cost Function.}
We consider a non-zero sum game between player $P$ and player $E$ where they strive to minimize their own cost functions. Player $i$'s  ($i=P,E$) cost function is given by
\begin{eqnarray}
    \min_{\{u^i_t\}_{t=0}^{T-1}} J^i(\widehat{x}_0^i) &:=& \min_{\{u^i_t\}_{t=0}^{T-1}}\mathbb{E}\left.\left[x_T^{\top}{ Q^i_T} x_T +\sum_{t=0}^{T-1}\left({ x_{t}^{\top}Q^i_{t}x_{t}} + (u^i_t)^{\top}R_t^iu^i_t\right)\,\right\vert\, \mathcal{H}^i_{0}\right],\label{eq:cost_P}
\end{eqnarray}
with cost parameters $Q_t^P,Q_t^E\in\mathbb{R}^{n\times n}$, $R_t^P\in\mathbb{R}^{m\times m}$,  and $R_t^E\in\mathbb{R}^{k\times k}$.
  
For the well-definedness of the game, we summarize the assumptions on the model parameters, initial state, and noise.

\begin{Assumption}[Parameters, Initial State, and Noise]\label{ass:LQGG_model}
For $i=P,E$,
\begin{enumerate}
    \item $\{w_t\}_{t=0}^{T-1}$ and $\{w_t^i\}_{t=1}^{T-1}$ are zero-mean, i.i.d. Gaussian random variables that are independent from $x_0$ and each other and such that $\mathbb{E}[w_tw_t^\top]=W$ is positive definite and $\mathbb{E}[w_t^i(w_t^i)^\top]=G^i$ is positive definite;
    \item Both matrices $H_{t+1}^P\in \mathbb{R}^{p\times n}$ and $H_{t+1}^E\in \mathbb{R}^{q\times n}$ have rank $n$ for $t=0,\dots,T-1$.
    \item The matrices $\Gamma_tW\Gamma_t^\top$ are non-singular for $t=1,\dots,T$;
    \item The cost matrices $Q_t^i$, for $t=0,1,\ldots,T$ are positive semi-definite, and $R_t^i$ for $t=0,1,\ldots,T-1$ are positive definite.
\end{enumerate}
\end{Assumption}
Under the assumptions we make on $G^P$ and $G^E$, we exclude the perfect information case as we require $G^P$ and $G^E$ to be positive definite. This is a challenging scenario to study as the agent cannot get precise information for any
 coordinate of the state process.  On the other hand, since $H_{t+1}^P\in \mathbb{R}^{p\times n}$ and $H_{t+1}^E\in \mathbb{R}^{q\times n}$ have rank $n$, the signal process will reveal aggregated information from all of the coordinates. Hence the agent is still capable of gradually learning each coordinate of the state process.

\begin{Remark}{\rm
We focus on the case where all states are partially observable, as given in Assumption \ref{ass:LQGG_model}, for the rest of Section \ref{sec:set-up} and for all of Section \ref{sec:equilibrium}. We generalize it to a mixed fully  and partially observable setting in Section \ref{sec:more_general}.}
\end{Remark}

\subsection{Sufficient Statistics: Decentralized Kalman Filtering with Information Correction}

To better demonstrate the sufficient statistics of Kalman filtering in the game setting, we start with a brief discussion of some existing results in the single-agent setting.

\subsubsection{Preliminary: Single-agent setting}
Suppose there is a single player $P$  who controls the state dynamics \eqref{eq:linear_dynamics}:
\begin{eqnarray}\label{eq:linear_dynamics_single}
    {x}_{t+1} = A_t {x}_t \, + \, B^P_t {u}^P_t \,+\, \Gamma_t w_t,
\end{eqnarray}
where ${x}_0 = x$ is the initial position and 
${u}_t^P\in \mathbb{R}^m$ are the controls from player $P$.
\vspace{-0.2cm}
\paragraph{Information Structure.} In this single-player case player $P$ believes that the initial state is drawn from a Gaussian distribution at the time $t=0$:
\begin{eqnarray}\label{eq:initial_belief_P_single}
    {x}_0\sim \mathcal{N}
    \left(\widehat{{x}}^P_0,W^P_0\right),
\end{eqnarray}
    and thereafter player $P$ observes the following noisy state signal $z_t\in \mathbb{R}^p$:
    \begin{eqnarray}\label{eq:observation_P_single}
    z^P_{t+1} = H^P_{t+1} \,x_{t+1} + \,w^P_{t+1},\quad w^P_{t+1}\sim \mathcal{N}(0,G^P),\quad t=0,1,\cdots,T-1,
    \end{eqnarray}
    with $\{w^P_{t}\}_{t=0}^{T-1}$ a sequence of i.i.d. random variables. Here $G^P\in\mathbb{R}^{p\times p}$ and $H_{t+1}^P\in \mathbb{R}^{p\times n}$.

Assume the information available to player $P$ to make a decision at time $t$ follows:
\begin{eqnarray}
   {\mathcal{H}}^P_t = \{\widehat{x}_0^P,W_0^P\} \cup \mathcal{Z}_t^P \cup \mathcal{U}^P_{t-1},
\end{eqnarray}
then we have the following result characterizing player $P$'s belief in the state.
\begin{Theorem}{\cite[(5.3-39)-(5.3-42)]{stengel1994optimal}}\label{thm:single_agent_filtering}
    The sufficient statistic for player P at decision time $t = 0$ is $(\widehat{x}_0, W_0^P)$. Namely player P believes that $x_0 \sim \mathcal{N}(\widehat{x}_0, W_0^P)$. For time $1 \leq t \leq T-1$, the distribution of the physical state $x_t$ calculated by player~$P$, by conditioning on the private information available to him at time $t$, is given by 
\begin{eqnarray}\label{eq:belief_process_SA}
x_t \sim \mathcal{N}(\widehat{x}_t^{P},\widehat{\Sigma}_t^P), 
\end{eqnarray}
where
\begin{subequations}
\begin{align}
   \big(\widehat{x}^{P}_t\big)^{-} &= A_{t-1} \widehat{x}^{P}_{t-1} + B^P_{t-1} u^P_{t-1},\label{eq:x_hat_eq1_SA}\\
\big(\widehat{\Sigma}^{P}_t\big)^{-} &=A_{t-1}\widehat{\Sigma}^{P}_{t-1}A_{t-1}^{\top} +\Gamma_{t-1}W \Gamma_{t-1}^{\top},\label{eq:Sigma_hat_minus_SA} \\
K_t^P &= \big(\widehat{\Sigma}^{P}_t\big)^{-}(H_t^P)^{\top}\left[ H_t^P\big(\widehat{\Sigma}^{P}_t\big)^{-} (H^P_t)^{\top}+G^P\right]^{-1},\label{eq:K_def_SA}\\
\widehat{x}_t^{P} &= \big(\widehat{x}_t^{P}\big)^{-} + K_t^{P} \left[ z_t^P -H_t^P\big(\widehat{x}_t^{P}\big)^{-}\right],\label{eq:x_hat_eq3_SA}\\
\widehat{\Sigma}^{P}_t &= \big(I-K_t^PH^P_t\big)\big(\widehat{\Sigma}^{P}_t\big)^{-},\label{eq:x_hat_eq2_SA}
\end{align}
\end{subequations}
with initial condition $\widehat{x}^{P}_0 = \widehat{x}^{P}_0$ and $\widehat{\Sigma}^{P}_0 = W^{P}_0$. 
\end{Theorem}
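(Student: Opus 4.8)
The plan is to argue by induction on $t$, essentially reproducing the classical derivation of the Kalman filter (cf.\ \cite{stengel1994optimal}) but tracking the control term and the information structure $\mathcal{H}_t^P$ carefully. The base case $t=0$ is immediate from \eqref{eq:initial_belief_P_single}: conditional on $\mathcal{H}_0^P=\{\widehat{x}_0^P,W_0^P\}$, player $P$ assigns $x_0\sim\mathcal{N}(\widehat{x}_0^P,W_0^P)$, so the sufficient statistic is $(\widehat{x}_0^P,W_0^P)$.

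For the inductive step, assume that conditional on $\mathcal{H}_{t-1}^P$ the law of $x_{t-1}$ is $\mathcal{N}(\widehat{x}_{t-1}^P,\widehat{\Sigma}_{t-1}^P)$, and split the passage to time $t$ into a prediction step and a correction step. In the prediction step, the admissible control $u_{t-1}^P$ is $\mathcal{H}_{t-1}^P$-measurable, while the process noise $w_{t-1}$ is independent of $\mathcal{H}_{t-1}^P$ (the latter is generated by $x_0$ together with $w_0,\dots,w_{t-2}$ and $w_1^P,\dots,w_{t-1}^P$, none involving $w_{t-1}$). Conditioning the linear recursion \eqref{eq:linear_dynamics_single} on $\mathcal{H}_{t-1}^P$ then gives that $x_t\mid\mathcal{H}_{t-1}^P$ is Gaussian with mean $(\widehat{x}_t^P)^-=A_{t-1}\widehat{x}_{t-1}^P+B_{t-1}^Pu_{t-1}^P$ and covariance $(\widehat{\Sigma}_t^P)^-=A_{t-1}\widehat{\Sigma}_{t-1}^PA_{t-1}^\top+\Gamma_{t-1}W\Gamma_{t-1}^\top$, i.e.\ \eqref{eq:x_hat_eq1_SA}--\eqref{eq:Sigma_hat_minus_SA}.

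In the correction step, since $\mathcal{H}_t^P=\mathcal{H}_{t-1}^P\cup\{u_{t-1}^P\}\cup\{z_t^P\}$ and $u_{t-1}^P$ is already $\mathcal{H}_{t-1}^P$-measurable, the only new information is $z_t^P=H_t^Px_t+w_t^P$ with $w_t^P\sim\mathcal{N}(0,G^P)$ independent of $x_t$ and of $\mathcal{H}_{t-1}^P$. Hence $(x_t,z_t^P)$ is, conditionally on $\mathcal{H}_{t-1}^P$, jointly Gaussian, with conditional means $\big((\widehat{x}_t^P)^-,H_t^P(\widehat{x}_t^P)^-\big)$ and conditional covariance blocks $(\widehat{\Sigma}_t^P)^-$, $(\widehat{\Sigma}_t^P)^-(H_t^P)^\top$ and $H_t^P(\widehat{\Sigma}_t^P)^-(H_t^P)^\top+G^P$; the last block is invertible because $G^P\succ0$ by Assumption \ref{ass:LQGG_model} and $(\widehat{\Sigma}_t^P)^-\succeq0$. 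Applying the Gaussian conditioning formula to the law of $x_t$ given $\mathcal{H}_{t-1}^P$ and $z_t^P$ produces \eqref{eq:K_def_SA}--\eqref{eq:x_hat_eq2_SA}, closing the induction.

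The only delicate point — and the step I would treat most carefully — is the measurability/independence bookkeeping that lets one treat $u_{t-1}^P$ as a constant under the conditioning while keeping $w_{t-1}$ and $w_t^P$ independent of the conditioning $\sigma$-algebra; granting this, the remaining computation is the textbook Kalman update and need not be written out in full.
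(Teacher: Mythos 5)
Your argument is correct and is essentially the standard prediction--correction derivation of the Kalman filter that the paper itself does not reproduce but simply imports from \cite[(5.3-39)--(5.3-42)]{stengel1994optimal}: induction on $t$, conditioning the linear dynamics on $\mathcal{H}_{t-1}^P$ for the prediction step, and applying the Gaussian conditioning formula to the jointly Gaussian pair $(x_t,z_t^P)$ for the measurement update, with the measurability of $u_{t-1}^P$ and independence of $w_{t-1}$, $w_t^P$ handled exactly as in the textbook treatment. No discrepancy with the intended (cited) proof.
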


Note that $\big(\widehat{x}^{P}_t\big)^{-} = \mathbb{E}\left.\Big[x_t\right|\mathcal{H}_{t-1}\Big]$
is the state estimate obtained {before the measurement update, namely,} using information up to time $t-1$. This term is often called the {\it pre-estimate} in the literature. With the new measurement information $z_t^P$ at time $t$,
the agent updates the state estimate to $\widehat{x}_t^P=\mathbb{E}\left.\Big[x_t\right|\mathcal{H}_t^P\Big]$, which is a linear combination of $\big(\widehat{x}^{P}_t\big)^{-}$ and $z_t^P$.  This term is often called the {\it post-estimate} \cite{stengel1994optimal}. Here the second term $z_t^P -H_t^P\big(\widehat{x}_t^{P}\big)^{-}$ on the RHS of \eqref{eq:x_hat_eq3_SA} is independent of the first term $\big(\widehat{x}_t^{P}\big)^{-}$. The only decision variable $K_t^P$ in the coefficients is chosen so that the conditional mean-square error is minimized. Namely,
\begin{eqnarray}
  K_t^P = \arg\min \,\, \mathbb{E}\left.\Big[\|x_t-\widehat{x}_t^P\|^2\right|\mathcal{H}_t^P\Big].
\end{eqnarray}
In terms of quantifying the uncertainty in the state estimate, we have $\big(\widehat{\Sigma}^{P}_t\big)^{-} = \mathbb{E}\Big[(x_t-(\widehat{x}_t^P)^{-})^\top(x_t-(\widehat{x}_t^P)^{-})\Big]$ and $\widehat{\Sigma}^{P}_t  = \mathbb{E}\Big[\big(x_t-\widehat{x}_t^P\big)^\top\big(x_t-\widehat{x}_t^P\big)\Big]$
representing the covariance before and after measurement updates, respectively.

\subsubsection{Two-player setting}

There are a few challenges for the belief updates in the game setting. In this paper we focus on the case where both players $P$ and $E$ adopt linear feedback policies: 
\begin{equation}\label{eqn:linear_policy}
    u^P_t :=F_t^P \, \mathbb{E}[x_t|\mathcal{H}_{t}^P], \,\, {\rm and } \,\, u^E_t := F_t^E \,\mathbb{E}[x_t|\mathcal{H}_{t}^E],
\end{equation}
with some {\it policy matrices} $F_t^P\in \mathbb{R}^{m\times n}$ and $F_t^E\in \mathbb{R}^{k\times n}$. 

From 
player $P$'s perspective, the new information collected at time $t$ is $z_t^P$ and $u^E_{t-1}$. Intuitively, 
player $P$ should be able to use $u^E_{t-1}$ as some additional information to improve their estimate of $x_{t-1}$.  Note that player $P$ is aware that player $E$ adopts a linear feedback policy $u^E_{t-1}= F^E_{t-1} \mathbb{E}[x_{t-1}|\mathcal{H}_{t-1}^E]$, for which the opponent's policy matrix $F^E_{t-1}$ (a function of model parameters) is known  but the state estimate 
$\mathbb{E}[x_{t-1}|\mathcal{H}_{t-1}^E]$ is unknown to player $P$. In order to utilize the information contained in $u^E_{t-1}$, player $P$ also needs to infer the distribution of $\mathbb{E}[x_{t-1}|\mathcal{H}_{t-1}^E]$ using $\mathcal{H}_{t-1}^P$. 
We will show later on in Section \ref{sec:tower_property} that this idea of {\it using the opponent's actions to make an information correction} is not only a possible approach to improve the estimation precision but also a necessary step to guarantee that the conditional expectations based on the information filtrations satisfy the tower property and hence that the DPP holds.

Let us start at decision time $t = 0$. Player $P$ reasons as follows. He models his initial belief $\widehat{x}^P_0$ of the initial state $x_0$ as
\begin{eqnarray}\label{eq:x_e_P}
\widehat{x}^P_0 = x_0 +e_0^P,
\end{eqnarray}
where $x_0$ is the true physical state and $e_0^P$ is player $P$'s estimation error, whose distribution, in view of \eqref{eq:initial_belief_P}, is $\mathcal{N}(0,W_0^{P})$.  
In the same way, player $E$'s belief $\widehat{x}^E_0$ of the initial state $x_0$ follows
\begin{eqnarray}\label{eq:x_e_E}
\widehat{x}_0^E = x_0 + e_0^E,
\end{eqnarray}
where, as before, $x_0$ is the true physical state and $e_0^E$ is player $E$'s estimation 
error, whose distribution, in view of \eqref{eq:initial_belief_E}, is $ \mathcal{N}(0,W_0^{E})$. 
The Gaussian random variables $e_0^E$ and $e_0^P$ are independent by our assumptions.

From player $P$'s perspective, $\widehat{x}_0^P$ is known but $\widehat{x}_0^E$ is a random variable. 
Subtracting \eqref{eq:x_e_P} from \eqref{eq:x_e_E}, at time $t = 0$ player $P$ concludes that as far as 
he is concerned, player $E$'s estimate, upon which he will decide his optimal control, is the random variable
\begin{eqnarray}\label{eq:x_0_different_view}
\widehat{x}_0^E = \widehat{x}_0^P + e_0^E - e_0^P.
\end{eqnarray}
As far as $P$ is concerned, $E$'s estimate of the initial state $x_0$ is a Gaussian random variable
\begin{eqnarray}
\widehat{x}_0^E \sim \mathcal{N}(\widehat{x}_0^P,W_0^P+W_0^E).
\end{eqnarray}
Thus, at time $t = 0$ player $P$ has used his private information $\widehat{x}^P_0$ and the public information $(W_0^P,W_0^E)$ to calculate the distribution of the sufficient statistic $\widehat{x}_0^E$ of
player $E$. Similarly, as far as player $E$ is concerned, at time $t=0$ the distribution of the initial state estimate $\widehat{x}_0^P$ of player $P$ follows $\mathcal{N}(\widehat{x}_0^E,W_0^P+W_0^E)$.

In addition to \eqref{eqn:linear_policy}, we further restrict the admissible set of policy matrices to the following:
\[
\mathcal{A}^P:=\big\{F^P\in\mathbb{R}^{m\times n}|F^P\,\,\text{has rank} \min (m,n)\big\},\quad \mathcal{A}^E:=\big\{F^E\in\mathbb{R}^{k\times n}|F^E\,\,\text{has rank} \min (k,n)\big\}.
\]
For time $t \ge 1$, we have the following result.

\begin{Theorem}[Sufficient Statistics in Two-player Games]\label{thm:sufficient_statistics} 
Assume the sufficient statistic of player $i$ $(i=P,E)$ at decision time $t = 0$ is $x_0 \sim N(\widehat{x}_0^i, W_0^i)$. In addition, assume both players are applying linear strategies. Namely, $u^P_t = F_t^P \mathbb{E}[x_t|\mathcal{H}_{t}^P]$ and $u^E_t = F_t^E \mathbb{E}[x_t|\mathcal{H}_{t}^E]$ for some matrices $F_t^P\in \mathcal{A}^P$ and $F_t^E\in \mathcal{A}^E$. 
Then, for time $1 \leq t \leq T-1$, the distribution of  the physical state $x_t$ calculated by player~$i$  conditioning on the private information available to him at time $t$ follows 
\begin{equation}\label{eq:belief_process}
x_t \sim \mathcal{N}(\widehat{x}_t^{i},\widehat{\Sigma}_t^i),
\end{equation}
\vspace{-0.5cm}
where, for $j \neq i$,
\begin{subequations}
\begin{align}
 J_{t-1}^i  &= \Big( \widehat{\Sigma}_{t-1}^i -  \widetilde{\Sigma}_{t-1}^{(i,j)}\Big) \widehat{\Sigma}_{t-1}^{(i,j)}({Y}_{t-1}^j)^\top \Big({Y}_{t-1}^j \widehat{\Sigma}_{t-1}^{(i,j)}\widehat{\Sigma}_{t-1}^{(i,j)} ({Y}_{t-1}^j)^\top\Big)^{-1},\label{eq:J_Kalman}\\
(\widehat{x}^{i}_{t-1})^{+} &= \widehat{x}^{i}_{t-1}+ J_{t-1}^i({y}^j_{t-1} - {Y}_{t-1}^j \widehat{x}_{t-1}^i) ,\label{eq:x_hat_eq4}\\
 (\widehat{\Sigma}_{t-1}^i)^{+} 
    &= \widehat{\Sigma}^i_{t-1} - \Big(\widehat{\Sigma}^i_{t-1}-\widetilde{\Sigma}_{t-1}^{(i,j)}\Big)(\widehat{\Sigma}_{t-1}^{(i,j)})^{-1} \Big(\widehat{\Sigma}_{t-1}^i-\widetilde{\Sigma}_{t-1}^{(i,j)}\Big)^\top,\label{eq:sig_plus_hat}\\
\big(\widehat{x}^{i}_t\big)^{-} &= A_{t-1} (\widehat{x}^{i}_{t-1})^{+} + B^P_{t-1} u^P_{t-1} + B^E_{t-1}u^E_{t-1},\label{eq:x_hat_eq1}\\
\big(\widehat{\Sigma}^{i}_t\big)^{-} &= A_{t-1}(\widehat{\Sigma}^{i}_{t-1})^+A_{t-1}^{\top} +\Gamma_{t-1}W \Gamma_{t-1}^{\top},\label{eq:Sigma_hat_minus} \\
K_t^i &= \big(\widehat{\Sigma}^{i}_t\big)^{-}(H_t^i)^{\top}\left[ H_t^i\big(\widehat{\Sigma}^{i}_t\big)^{-} (H^i_t)^{\top}+G^i\right]^{-1},\label{eq:K_def}\\
\widehat{x}_t^{i} &= \big(\widehat{x}_t^{i}\big)^{-} + K_t^{i} \left[ z_t^i -H_t^i\big(\widehat{x}_t^{i}\big)^{-}\right],\label{eq:x_hat_eq3}\\
\widehat{\Sigma}^{i}_t &= \big(I-K_t^iH^i_t\big)\big(\widehat{\Sigma}^{i}_t\big)^{-},
\label{eq:x_hat_eq2}\\
\widetilde{\Sigma}_{t}^{(i,j)} &= \left(I-K_{t}^iH_{t}^i\right)\left( A_{t-1} \Delta_{t-1}^{(i,j)} A_{t-1}^{\top} + \Gamma_{t-1} W \Gamma_{t-1}^{\top}\right) \left( I-K_{t}^jH^j_{t}\right)^\top\\
\Delta_{t-1}^{(i,j)} &=
{(\widehat{\Sigma}^i_{t-1} - \widetilde{\Sigma}_{t-1}^{(i,j)})(\widehat{\Sigma}^{(i,j)}_{t-1})^{-1} (\widehat{\Sigma}^j_{t-1} - \widetilde{\Sigma}_{t-1}^{(j,i)}) ^\top+\widetilde{\Sigma}_{t-1}^{(i,j)} }\label{eqn:deltaij_defn}\\
 \widehat{\Sigma}_t^{(i,j)} &= \widehat{\Sigma}_t^{i} + \widehat{\Sigma}_t^j - \widetilde{\Sigma}_t^{(i,j)} - \left(\widetilde{\Sigma}_t^{(i,j)}\right)^{\top}\label{eq:Sigma_hat_ij},
\end{align}
\end{subequations}
where $\widehat{\Sigma}_{t-1}^{(i,j)}$ is positive definite. 
{The values of $Y_{t}^P \in \mathbb{R}^{m\times n}$, $Y_{t}^E\in\mathbb{R}^{k\times n}$ and $y_t^P$,$y_t^E$ depend on the ranks of $F_t^P$ and $F_t^E$ as follows:
\begin{itemize}
    \item[(i)] The pair 
    \[ (Y_t^{P},y_t^P)=\left\{ \begin{array}{ll} (F_t^P,u_t^P) &  \mbox{if $F_t^P$ has rank $m<n$,} \\ (I_n, \widehat{x}_t^P) & 
\mbox{if $F_t^P$ has rank $n\leq m$.} \end{array} \right. \]   
\item[(ii)] The pair 
    \[ (Y_t^{E},y_t^E)=\left\{ \begin{array}{ll} (F_t^E,u_t^E) & \mbox{if $F_t^P$ has rank $k<n$,} \\ (I_n, \widehat{x}_t^E) & 
\mbox{if $F_t^P$ has rank $n\leq k$.} \end{array} \right. \]   
\end{itemize}}
In addition,
the initial conditions are  $\widehat{\Sigma}^{i}_0 = W^{i}_0$, $\widetilde{\Sigma}_{0}^{(i,j)} = 0$, and $\widehat{\Sigma}^{(i,j)}_0 = \widehat{\Sigma}_0^i + \widehat{\Sigma}_0^j$. 
Finally, in player $i$'s view, the posterior distribution for the state estimate $\widehat{x}_t^j$ of player $j$ is
\begin{eqnarray}\label{eq:player_i_view_on_xj}
\widehat{x}_t^j\sim \mathcal{N}(\widehat{x}_t^i,\widehat{\Sigma}_t^{(i,j)}).
\end{eqnarray}

\end{Theorem}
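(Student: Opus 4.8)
The plan is to establish all the displayed identities simultaneously by induction on $t$, working with a hypothesis strong enough that the chain of ``beliefs about beliefs'' closes on itself. Write $e_t^{\ell}:=x_t-\widehat{x}_t^{\ell}$ for $\ell\in\{P,E\}$, so that the deterministic identity $\widehat{x}_t^{j}-\widehat{x}_t^{i}=e_t^{i}-e_t^{j}$ holds and can be used repeatedly. The base case $t=0$ is the computation already carried out before the theorem: $\widehat{\Sigma}_0^i=W_0^i$, $\widetilde{\Sigma}_0^{(i,j)}=0$, $\widehat{\Sigma}_0^{(i,j)}=W_0^i+W_0^j$, and, in player $i$'s view, $\widehat{x}_0^j\sim\mathcal{N}(\widehat{x}_0^i,W_0^i+W_0^j)$ by \eqref{eq:x_0_different_view}. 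The induction hypothesis I carry at time $t-1$ is: for \emph{each} ordered pair $(i,j)$, conditionally on $\mathcal{H}^i_{t-1}$ the error pair $(e^i_{t-1},e^j_{t-1})$ is zero-mean, jointly Gaussian (with $\mathcal{H}^i_{t-1}$) and independent of $\mathcal{H}^i_{t-1}$, with (deterministic) covariance $\left(\begin{smallmatrix}\widehat{\Sigma}^i_{t-1}&\widetilde{\Sigma}^{(i,j)}_{t-1}\\ (\widetilde{\Sigma}^{(i,j)}_{t-1})^{\top}&\widehat{\Sigma}^j_{t-1}\end{smallmatrix}\right)$; in particular $\mathbb{E}[e^j_{t-1}\mid\mathcal{H}^i_{t-1}]=0$, which is the ``agreement'' $\mathbb{E}[\widehat{x}^j_{t-1}\mid\mathcal{H}^i_{t-1}]=\widehat{x}^i_{t-1}$ and, via the identity above, yields \eqref{eq:player_i_view_on_xj}--\eqref{eq:Sigma_hat_ij} at time $t-1$. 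I also carry the symmetries $\widetilde{\Sigma}^{(j,i)}_{t-1}=(\widetilde{\Sigma}^{(i,j)}_{t-1})^{\top}$ (hence $\widehat{\Sigma}^{(i,j)}_{t-1}=\widehat{\Sigma}^{(j,i)}_{t-1}$) and the positive definiteness of $\widehat{\Sigma}^{(i,j)}_{t-1}$.

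For the inductive step, observe that relative to $\mathcal{H}^i_{t-1}$ the genuinely new data in $\mathcal{H}^i_t$ are the opponent's action $u^j_{t-1}=Y^j_{t-1}\widehat{x}^j_{t-1}$ (the own action $u^i_{t-1}$ being $\mathcal{H}^i_{t-1}$-measurable) and the fresh signal $z^i_t$, incorporated in the three stages of the statement. \emph{(a) Correction:} conditioning the Gaussian pair $(x_{t-1},\widehat{x}^j_{t-1})$ on $u^j_{t-1}$, the innovation is $y^j_{t-1}-Y^j_{t-1}\widehat{x}^i_{t-1}=Y^j_{t-1}(e^i_{t-1}-e^j_{t-1})$, whose covariance and whose covariance with $e^i_{t-1}$ are read off the hypothesis; the standard conditional-mean/covariance formulas then give \eqref{eq:J_Kalman}--\eqref{eq:sig_plus_hat}, with invertibility of $\widehat{\Sigma}^{(i,j)}_{t-1}$ supplied by the carried positive definiteness. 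A key point is that player $i$ may likewise express player $j$'s corrected error, whose $\mathcal{H}^i_{t-1}$-measurable part cancels, leaving $x_{t-1}-(\widehat{x}^j_{t-1})^{+}=(I-J^j_{t-1}Y^i_{t-1})e^j_{t-1}+J^j_{t-1}Y^i_{t-1}e^i_{t-1}$. \emph{(b) Prediction:} applying \eqref{eq:linear_dynamics}, using that both controls lie in $\mathcal{H}^i_{t-1}\subseteq\mathcal{H}^i_t$ by \eqref{seller_information_general} and that $w_{t-1}$ is zero-mean and independent, gives \eqref{eq:x_hat_eq1}--\eqref{eq:Sigma_hat_minus}; the predicted-stage cross-error covariance is $A_{t-1}\Delta^{(i,j)}_{t-1}A_{t-1}^{\top}+\Gamma_{t-1}W\Gamma_{t-1}^{\top}$ with $\Delta^{(i,j)}_{t-1}:=\mathbb{E}\big[(x_{t-1}-(\widehat{x}^i_{t-1})^{+})(x_{t-1}-(\widehat{x}^j_{t-1})^{+})^{\top}\big]$, and substituting the step-(a) expressions for the two corrected errors and simplifying with the hypothesis covariances (using identities such as $I-J^i_{t-1}Y^j_{t-1}=(\widehat{\Sigma}^j_{t-1}-(\widetilde{\Sigma}^{(i,j)}_{t-1})^{\top})(\widehat{\Sigma}^{(i,j)}_{t-1})^{-1}$ in the full-rank case, and its analogue otherwise) collapses $\Delta^{(i,j)}_{t-1}$ to \eqref{eqn:deltaij_defn}. \emph{(c) Measurement:} the classical Kalman step for $z^i_t=H^i_t x_t+w^i_t$ yields \eqref{eq:K_def}--\eqref{eq:x_hat_eq2}, and writing $e^i_t=(I-K^i_tH^i_t)(x_t-(\widehat{x}^i_t)^{-})-K^i_t w^i_t$ together with its $j$-analogue and the independence $w^i_t\perp w^j_t$ of Assumption~\ref{ass:LQGG_model} gives $\widetilde{\Sigma}^{(i,j)}_t=(I-K^i_tH^i_t)(A_{t-1}\Delta^{(i,j)}_{t-1}A_{t-1}^{\top}+\Gamma_{t-1}W\Gamma_{t-1}^{\top})(I-K^j_tH^j_t)^{\top}$.

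It remains to restore the hypothesis at time $t$. Gaussianity is immediate since every operation is affine-Gaussian; $\widehat{\Sigma}^i_t$ is the Kalman post-covariance; $\widehat{\Sigma}^{(i,j)}_t=\widehat{\Sigma}^i_t+\widehat{\Sigma}^j_t-\widetilde{\Sigma}^{(i,j)}_t-(\widetilde{\Sigma}^{(i,j)}_t)^{\top}$ follows from $\widehat{x}^j_t-\widehat{x}^i_t=e^i_t-e^j_t$, giving \eqref{eq:Sigma_hat_ij}; the symmetries $\widetilde{\Sigma}^{(j,i)}_t=(\widetilde{\Sigma}^{(i,j)}_t)^{\top}$ and $\Delta^{(j,i)}_{t-1}=(\Delta^{(i,j)}_{t-1})^{\top}$ are read off the closed forms; and positive definiteness of $\widehat{\Sigma}^{(i,j)}_t$ is propagated because $\Gamma_{t-1}W\Gamma_{t-1}^{\top}\succ0$ is injected at the prediction step and the rank-$n$ hypothesis on $H^i_t$ (Assumption~\ref{ass:LQGG_model}) preserves strict positivity through the measurement step. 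The central remaining point is the agreement $\mathbb{E}[\widehat{x}^j_t\mid\mathcal{H}^i_t]=\widehat{x}^i_t$, i.e.\ $\mathbb{E}[e^j_t\mid\mathcal{H}^i_t]=0$: since $e^j_t$ is a deterministic linear combination of $(e^i_{t-1},e^j_{t-1},w_{t-1},w^j_t)$ and $\mathbb{E}[e^j_t\mid\mathcal{H}^i_{t-1}]=0$, this reduces to checking that $e^j_t$ is uncorrelated both with the innovation $Y^j_{t-1}(e^i_{t-1}-e^j_{t-1})$ carried by $u^j_{t-1}$ and with the innovation carried by $z^i_t$; these two orthogonalities are precisely what pin down the gains $J^i_{t-1},J^j_{t-1}$ in the form \eqref{eq:J_Kalman}, which is the exact sense in which the correction step is not optional (cf.\ Section~\ref{sec:tower_property}). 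Joint Gaussianity then upgrades uncorrelatedness to independence, which also restores $\mathrm{Var}(e^j_t\mid\mathcal{H}^i_t)=\widehat{\Sigma}^j_t$, closing the induction.

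The main obstacle is exactly this last verification together with its self-referential character: player $i$'s filter uses the conditional law of $\widehat{x}^j_{t-1}$, which is the output of player $j$'s filter, which in turn uses the conditional law of $\widehat{x}^i_{t-2}$, and so on down the belief-about-belief chain. One must therefore choose an induction hypothesis rich enough to reproduce itself and then carry out the (somewhat lengthy) algebra of stages (a)--(c) showing that the three-stage recursion—and in particular the correction gain—simultaneously preserves the agreement property and all the cross-covariance identities for both orderings $(i,j)$ and $(j,i)$.
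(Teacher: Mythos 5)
Your overall architecture is the same as the paper's: a three-stage decentralized Kalman recursion (correction using the opponent's action, prediction through the dynamics, measurement update with the private signal), error-process bookkeeping that yields the Lyapunov-type recursion for $\widetilde{\Sigma}^{(i,j)}_t$, and the identity $\widehat{x}^j_t-\widehat{x}^i_t=e^i_t-e^j_t$ to obtain \eqref{eq:player_i_view_on_xj}. Your explicit induction hypothesis (conditional joint Gaussianity of the error pair and $\mathbb{E}[e^j_{t-1}\mid\mathcal{H}^i_{t-1}]=0$) is a clean formalization of what the paper uses implicitly, and in the full-rank branch ($Y^j_{t}=I_n$) your conditioning route and the paper's trace-minimization route coincide.

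However, there is a concrete gap at your step (a) in the reduced-rank branch, which is exactly the branch where the correction step has content. If $Y^j_{t-1}=F^j_{t-1}$ has rank $k<n$ (or $m<n$), the ``standard conditional-mean/covariance formulas'' applied to the innovation give gain $\big(\widehat{\Sigma}^i_{t-1}-\widetilde{\Sigma}^{(i,j)}_{t-1}\big)(Y^j_{t-1})^\top\big(Y^j_{t-1}\widehat{\Sigma}^{(i,j)}_{t-1}(Y^j_{t-1})^\top\big)^{-1}$ and posterior covariance $\widehat{\Sigma}^i_{t-1}-\big(\widehat{\Sigma}^i_{t-1}-\widetilde{\Sigma}^{(i,j)}_{t-1}\big)(Y^j_{t-1})^\top\big(Y^j_{t-1}\widehat{\Sigma}^{(i,j)}_{t-1}(Y^j_{t-1})^\top\big)^{-1}Y^j_{t-1}\big(\widehat{\Sigma}^i_{t-1}-\widetilde{\Sigma}^{(i,j)}_{t-1}\big)^\top$; these are \emph{not} \eqref{eq:J_Kalman} and \eqref{eq:sig_plus_hat}, with which they agree only when $Y^j$ is invertible. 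The paper reaches the stated expressions by a different mechanism: it minimizes the trace of the corrected error covariance over the gain and passes through the relation \eqref{eq:J_Sig_relation}, $J^P_{t-1}F^E_{t-1}\widehat{\Sigma}^{(P,E)}_{t-1}=\widehat{\Sigma}^P_{t-1}-\widetilde{\Sigma}^{(P,E)}_{t-1}$, which is then also the key input (together with $J^P_{t-1}F^E_{t-1}+J^E_{t-1}F^P_{t-1}=I$) for collapsing \eqref{eq:delta0} to \eqref{eqn:deltaij_defn}. Your argument has no counterpart to this step: the identity $I-J^i_{t-1}Y^j_{t-1}=\big(\widehat{\Sigma}^j_{t-1}-(\widetilde{\Sigma}^{(i,j)}_{t-1})^\top\big)\big(\widehat{\Sigma}^{(i,j)}_{t-1}\big)^{-1}$ that you invoke to simplify $\Delta^{(i,j)}_{t-1}$, and the orthogonality of $e^j_t$ to player $i$'s innovations that closes your induction, both hinge on $J^i_{t-1}Y^j_{t-1}=\big(\widehat{\Sigma}^i_{t-1}-\widetilde{\Sigma}^{(i,j)}_{t-1}\big)\big(\widehat{\Sigma}^{(i,j)}_{t-1}\big)^{-1}$, which fails for the plain conditional-Gaussian gain when $Y^j$ has deficient rank; ``and its analogue otherwise'' is precisely what is missing. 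Note also that the orthogonality of player $i$'s \emph{own} error to player $i$'s innovations is what pins down $J^i_{t-1}$ and $K^i_t$, whereas the orthogonality of player $j$'s error to those same innovations (which is what $\mathbb{E}[e^j_t\mid\mathcal{H}^i_t]=0$ requires) is an additional cross-player consistency statement that your sketch asserts rather than verifies; this is the crux of the belief-closure argument and in the $m<n$ or $k<n$ branch your proof as written either produces formulas different from those displayed in the theorem or leaves this key identity unestablished.
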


\begin{Remark}\label{remark:kalman} 
{\rm
\begin{enumerate}
\item
$J_{t-1}^i$ in \eqref{eq:J_Kalman} is the Kalman gain for player $i$ when viewing player $j$'s action as the additional signal to improve the state estimation in the previous step $t-1$. We call $(\widehat{x}_{t-1}^i)^+$ in \eqref{eq:x_hat_eq4} the {\it improved-estimate} for $x_{t-1}$, with the corresponding estimation error $(\widehat{\Sigma})_{t-1}^+$. 

\item The post-estimates of the state and covariance after the measurement/signal update \eqref{eq:x_hat_eq3}-\eqref{eq:x_hat_eq2} take similar forms to the single-agent case \eqref{eq:x_hat_eq3_SA}-\eqref{eq:x_hat_eq2_SA}. The differences occur in the input state and covariance estimates. In particular, the post-estimate for the single-agent setting uses the pre-estimate as the input whereas the post-estimate for the two-player setting uses the improved-estimate as the input. 
\item Equation \eqref{eq:player_i_view_on_xj} in Theorem \ref{thm:sufficient_statistics} shows that the chain of ``belief about belief'' stops at the second step, as the belief at the second step becomes a
public function of the beliefs at the first step.
\item When $n >k,m$, players $P$ and $E$ will not be able to recover the opponent's state estimate via observing the action taken by the opponent. Instead, from player $i$'s viewpoint, the posterior distribution for the state estimate $\widehat{x}_t^j$ of player $j$ follows a Gaussian distribution with mean $\widehat{x}_t^i$ and variance $\widehat{\Sigma}_t^{(i,j)}$.
\item {
Consider the special case that 
\begin{align}\label{eq:rank_scenario2}
F_t^P\in \mathbb{R}^{m\times n} \,\, \text{ has rank} \,\, n, \,\, n \leq m, \text{ and }
F_t^E\in \mathbb{R}^{k\times n} \,\, \text{ has rank} \,\,n, \,\, n\leq k.
\end{align}
In this case, player $i$ can fully recover the state estimate from player $j$ by observing her actions, as the RHS of the following equation is fully known to player $i$:
\begin{eqnarray}\label{eq:observalability_opponent_estimation}
\mathbb{E}[x_t|\mathcal{H}_{t}^j] = ((F_t^j)^\top F_t^j)^{-1} (F_t^j)^\top\, u^j_t.
\end{eqnarray}
 
 Observing that ${J}_t^{i} + {J}_t^{j}  = I$, we have 
\begin{eqnarray*}
(\widehat{x}_t^i)^+ = \left(I-{J}^i_t\right)\,\widehat{x}_t^i + {J}^i_t \widehat{x}_t^j = {J}^j_t\widehat{x}_t^i + \left(I-{J}^i_t\right)\, \widehat{x}_t^j = (\widehat{x}_t^j)^+.
\end{eqnarray*}
This shows that Player P and Player E have the {\it same} improved estimate after observing each other's actions. In this case, information is fully shared between the players.}
\end{enumerate}
}

\end{Remark}
\begin{proof}
There are four possible combinations under the conditions (i)-(ii) stated in Theorem \ref{thm:sufficient_statistics}. Here we only show the proof for the following combination as the proof for each of the other combinations follows the same logic:
\begin{align}\label{eq:rank_scenario1}
F_t^P\in \mathbb{R}^{m\times n} \,\, \text{ has rank} \,\, m, \,\,m < n; \text{ and }
F_t^E\in \mathbb{R}^{k\times n} \,\, \text{ has rank} \,\,k, \,\,k < n.
\end{align}
In addition, under condition \eqref{eq:rank_scenario1}, we only prove the results for player $P$ here as the results for player $E$ follow in the same way.

We handle the new information $u^E_{t-1}$ and $z_t^P$, in an incremental fashion. More precisely, we first adjust the estimate $\widehat{x}^P_{t-1}$ using $u^E_{t-1}$, denoted by $(\widehat{x}_{t-1}^P)^+$, and then derive $\widehat{x}^P_{t}$ using $z_t^P$ and $(\widehat{x}_{t-1}^P)^+$.

After player $P$ observes the action $u^E_{t-1}$ from player $E$, player $P$ updates:
\begin{eqnarray}
    (\widehat{x}_{t-1}^P)^{+} := \left.\mathbb{E}\Big[x_{t-1}\right|\mathcal{H}_{t-1}^P \cup \{u^E_{t-1}\}\Big].
\end{eqnarray}
Following the convention in filtering theory \cite{stengel1994optimal}, we write:
\begin{eqnarray}
     (\widehat{x}_{t-1}^P)^{+} = 
     \widehat{x}_{t-1}^P + J_{t-1}^P \Big(u^E_{t-1} - F_{t-1}^E\,\widehat{x}_{t-1}^{P}\Big),
\end{eqnarray}
where $J_{t-1}^P$ is a matrix to be determined to minimize $\mathbb{E}[\|(x_{t-1}^P)^{+}-x_{t-1}\|^2]$. To calculate $J_{t-1}^P$, we have
\begin{eqnarray*}
    &&{\rm \bf  cov} \Big(x_{t-1}- (\widehat{x}_{t-1}^P)^{+}\Big)\\
    &=& {\rm \bf cov} \Big(x_{t-1}- \widehat{x}_{t-1}^P-J_{t-1}^P (u^E_{t-1} - F_t^E\widehat{x}_{t-1}^{P})\Big)\\
      &=&{\rm \bf  cov} \Big(-(I-J_{t-1}^PF_{t-1}^E)e_{t-1}^P \,-\, J_{t-1}^PF_{t-1}^E e_{t-1}^E\Big)\\
      &&\qquad  + (I-J_{t-1}^PF_{t-1}^E)\widetilde{\Sigma}_{t-1}^{(P,E)} (J_{t-1}^PF_{t-1}^E)^\top + (J_{t-1}^PF_{t-1}^E) (\widetilde{\Sigma}_{t-1}^{(P,E)})^\top (I-J_{t-1}^PF_{t-1}^E)^\top\\
      &=& \widehat{\Sigma}_{t-1}^P -J_{t-1}^P F_{t-1}^E \widehat{\Sigma}_{t-1}^P  -\widehat{\Sigma}_{t-1}^P (F_{t-1}^E)^\top (J_{t-1}^P)^\top + J_{t-1}^P F_{t-1}^E \widehat{\Sigma}_{t-1}^P (F_{t-1}^E)^\top (J_{t-1}^P)^\top \\
      && \qquad + J_{t-1}^P F_{t-1}^E \widehat{\Sigma}_{t-1}^E (F_{t-1}^E)^\top (J_{t-1}^P)^\top +\widetilde{\Sigma}^{(P,E)}_{t-1} (F^E_{t-1})^\top (J^P_{t-1})^\top -J_{t-1}^P F_{t-1}^E \widetilde{\Sigma}^{(P,E)}_{t-1} (F^E_{t-1})^\top (J^P_{t-1})^\top\\
      && \qquad + J_{t-1}^P F_{T-1}^{P*} (\widetilde{\Sigma}^{(P,E)}_{t-1})^\top - J_{t-1}^P F_{t-1}^E (\widetilde{\Sigma}^{(P,E)}_{t-1})^\top (F_{t-1}^E)^\top (J_{t-1}^P)^\top.
\end{eqnarray*}
Note that minimizing $\mathbb{E}[\|(x_{t-1}^P)^{+}-x_{t-1}\|^2]$ is equivalent to minimizing $\Tr \left({\rm \bf  cov} \Big(x_{t-1}- (\widehat{x}_{t-1}^P)^{+}\Big)\right)$. Taking the derivative with respect to $J_{t-1}^P$ and setting it to zero, we have
\begin{eqnarray*}
    \frac{\partial \Tr \left({\rm \bf  cov} \Big(x_{t-1}- (\widehat{x}_{t-1}^P)^{+}\Big)\right)}{\partial J_{t-1}^P} = -2 F_{t-1}^E  \widehat{\Sigma}_{t-1}^P + 2 F_{t-1}^E \widehat{\Sigma}^{(P,E)}_{t-1}(F^E_{t-1})^\top (J^P_{t-1})^\top  +2 F_{t-1}^E(\widetilde{\Sigma}_{t-1}^{(P,E)})^\top = 0, 
\end{eqnarray*}
which is equivalent to the following equation (since $\widehat{\Sigma}_{t-1}^{(P,E)}$ is symmetric by its definition)
\begin{eqnarray}\label{eq:J_Sig_relation}
  \widehat{\Sigma}_{t-1}^P -  \widetilde{\Sigma}_{t-1}^{(P,E)}= J_{t-1}^P F_{t-1}^E\widehat{\Sigma}_{t-1}^{(P,E)}. 
\end{eqnarray}
When $F_{t-1}^E\widehat{\Sigma}_{t-1}^{(P,E)}\widehat{\Sigma}_{t-1}^{(P,E)} (F_{t-1}^E)^\top$ is of rank $k$ (which will be shown at the end of the proof), we have
\begin{eqnarray}\label{eq:J}
 J_{t-1}^P  = \Big( \widehat{\Sigma}_{t-1}^P -  \widetilde{\Sigma}_{t-1}^{(P,E)}\Big) \widehat{\Sigma}_{t-1}^{(P,E)}(F_{t-1}^E)^\top \Big(F_{t-1}^E \widehat{\Sigma}_{t-1}^{(P,E)}\widehat{\Sigma}_{t-1}^{(P,E)} (F_{t-1}^E)^\top\Big)^{-1}. 
\end{eqnarray}
Using the expression in \eqref{eq:J}, we have
\begin{eqnarray*}
    (\widehat{\Sigma}_{t-1}^P)^{+} &:=& {\rm \bf  cov} \Big(x_{t-1}- (\widehat{x}_{t-1}^P)^{+}\Big)= \widehat{\Sigma}^P_{t-1} - (\widehat{\Sigma}^P_{t-1}-\widetilde{\Sigma}_{t-1}^{(P,E)})(\widehat{\Sigma}_{t-1}^{(P,E)})^{-1} \Big(\widehat{\Sigma}_{t-1}^P-\widetilde{\Sigma}_{t-1}^{(P,E)}\Big)^\top.
\end{eqnarray*}
Then we have the pre-estimate:
\begin{eqnarray}
    (\widehat{x}_{t}^{P})^{-} = A_{t-1}(\widehat{x}_{t-1}^{P})^+ + B^P_{t-1}u^P_{t-1} + B^E_{t-1}u^E_{t-1}.
\end{eqnarray}
The post-estimate after observing the signal/measure $z_t^P$ at time $t$ is defined as:
\begin{eqnarray}
    \widehat{x}_{t}^{P} = (\widehat{x}_{t}^{P})^{-} +K_t^P\Big(z_t^P-H_t^P (\widehat{x}_{t}^{P})^{-} \Big),
\end{eqnarray}
with a variance
\begin{eqnarray}
    (\widehat{\Sigma}_{t}^P)^{-}:= \mathbb{E}[(\widehat{x}_{t}^{P}-x_t)(\widehat{x}_{t}^{P}-x_t)^\top].
\end{eqnarray}
In the same way as for the derivation of $J_{t-1}^P$, we can show that the following choice of $K_t^P$ minimizes the quantity $\mathbb{E}[\|x_t - \widehat{x}_{t}^{P}\|^2]$:
\begin{eqnarray}
    K_t^P = (\widehat{\Sigma}_{t-1}^P)^{-} (H_t^P)^\top \Big[H_t^P (\widehat{\Sigma}_{t-1}^P)^{-} (H_t^P)^\top +G^P\Big]^{-1}.
\end{eqnarray}
The corresponding covariance takes the form:
\begin{eqnarray}
    \widehat{\Sigma}_t^P := \mathbb{E}\Big[\left\|x_t - \widehat{x}_{t}^{P}\right\|^2\Big] =  (I-K_t^PH_t^P)(\widehat{\Sigma}_{t-1}^P)^{-}.
\end{eqnarray}
To update player $P$'s belief of player $E$'s state, define similarly to the case \eqref{eq:x_0_different_view} when $t=0$,
\begin{eqnarray}\label{eq:estimation_error_relationship}
    x_t  = \widehat{x}_t^P -e_t^P = \widehat{x}_t^E -e_t^E, 
\end{eqnarray}
where $e_t^P$ and $e_t^E$ are the estimation errors from players $P$ and $E$, respectively. Given that \eqref{eq:estimation_error_relationship} is equivalent to the following:
\begin{eqnarray}
   \widehat{x}_t^P = \widehat{x}_t^E + (e_t^P-e_t^E),
\end{eqnarray}
Player $P$'s posterior distribution for the state estimate $\widehat{x}_t^E$ of player $E$ is
\begin{eqnarray}\label{eq:player_P_view_on_xE}
\widehat{x}_t^E\sim \mathcal{N}(\widehat{x}_t^P,\widehat{\Sigma}_t^{(P,E)})
\end{eqnarray}
where the estimation error covariance matrix $\widehat{\Sigma}_t^{(P,E)}$ is defined as
\begin{eqnarray}\label{eq:estimation_error_cov}
\widehat{\Sigma}_t^{(P,E)} &=& \mathbb{E}\left[\big(e^E_t-e^P_t\big)\big(e^E_t-e^P_t\big)^{\top} \right]= \widehat{\Sigma}_t^P + \widehat{\Sigma}_t^E - \widetilde{\Sigma}_t^{(P,E)} - \left(\widetilde{\Sigma}_t^{(P,E)}\right)^{\top},
\end{eqnarray}
in which $\widehat{\Sigma}_0^P=W_0^P$, $\widehat{\Sigma}_0^E=W_0^E$, $\widehat{\Sigma}_t^P=\mathbb{E}[e_t^P(e_t^P)^\top]$, $\widehat{\Sigma}_t^E=\mathbb{E}[e_t^E (e_t^E)^\top]$,  and  $\widetilde{\Sigma}_t^{(P,E)}=\mathbb{E}[e_t^P(e_t^E)^\top]$. 
We will see that $\widetilde{\Sigma}_t^{(P,E)}$ satisfies  a recursive linear matrix equation which is a Lyapunov equation:
\begin{eqnarray}\label{eq:Lyapunov}
\widetilde{\Sigma}_{t}^{(P,E)} = \left(I-K_{t}^PH_{t}^P\right)\left( A_{t-1}\Delta_{t-1}^{(P,E)} A_{t-1}^{\top} + \Gamma_{t-1} W \Gamma_{t-1}^{\top}\right) \left( I-K_{t}^EH^E_{t}\right)^\top,
\end{eqnarray}
with 
\begin{eqnarray}
 \Delta_{t-1}^{(P,E)}  &=& (I-J_{t-1}^PF_{t-1}^E)\widetilde{\Sigma}_{t-1}^{(P,E)}(I-J_{t-1}^EF_{t-1}^P)^\top + (I-J_{t-1}^PF_{t-1}^E)\widehat{\Sigma}_{t-1}^P (J_{t-1}^EF_{t-1}^P)^\top\nonumber\\
 && + J_{t-1}^PF_{t-1}^E (\widetilde{\Sigma}_{t-1}^{(P,E)})^\top (J_{t-1}^EF_{t-1}^P)^\top+ J_{t-1}^PF_{t-1}^E\widehat{\Sigma}_{t-1}^E(I-J_{t-1}^EF_{t-1}^P)^\top.\label{eq:delta0}
\end{eqnarray}
The equations \eqref{eq:Lyapunov} and \eqref{eq:delta0} hold since
 given \eqref{eq:x_hat_eq1}-\eqref{eq:x_hat_eq2}, we have
\begin{equation}\label{eqn:xhat_upda}
\widehat{x}_{t}^P = A_{t-1}(\widehat{x}_{t-1}^P)^+ + B^P_{T-1}u^P_{t-1} +B^E_{T-1} v^E_{t-1} + K_{t}^PH_{t}^PA_{t-1}\big(x_{t-1}-(\widehat{x}_{t-1}^P)^+\big) + K_{t}^PH_{t}^P\Gamma_{t-1} w_{t-1} +K_{t}^Pw_{t}^P,
\end{equation}
and hence
\begin{eqnarray}\label{eqn:e^P_exp}
e_{t}^P = \widehat{x}_{t}^P-x_{t} &=&  (I-K_{t}^PH_{t}^P)A_{t-1}\big((\widehat{x}_{t-1}^P)^+-x_{t-1}\big) + (K_{t}^PH_{t}^P-I)\Gamma_{t-1} w_{t-1} +K_{t}^Pw_{t}^P\nonumber\\
 &=&  (I-K_{t}^PH_{t}^P)A_{t-1}\,{\Big((I-J_{t-1}^PF_{t-1}^E)e_{t-1}^P +J_{t-1}^PF_{t-1}^E e_{t-1}^E\Big)}\nonumber\\
 &&+ (K_{t}^PH_{t}^P-I)\Gamma_{t-1} w_{t-1} +K_{t}^Pw_{t}^P.
\end{eqnarray}
Similarly, we have
\begin{eqnarray}\label{eqn:e^E_exp}
e_{t}^E 
 =  (I-K_{t}^EH_{t}^E)A_{t-1}\,{\Big((I-J_{t-1}^EF_{t-1}^P)e_{t-1}^E +J_{t-1}^EF_{t-1}^P e_{t-1}^P\Big)}+ (K_{t}^EH_{t}^E-I)\Gamma_{t-1} w_{t-1} +K_{t}^Ew_{t}^E.
\end{eqnarray}
Calculating $e_t^P(e_t^E)^\top$ using \eqref{eqn:e^P_exp} and \eqref{eqn:e^E_exp} and taking the expectation lead to \eqref{eq:Lyapunov} and \eqref{eq:delta0}.

Now we simplify \eqref{eq:delta0} to obtain \eqref{eqn:deltaij_defn}. As for \eqref{eq:J_Sig_relation}, we have
\begin{eqnarray}\label{eq:J_Sig_relation2}
  \widehat{\Sigma}_{t-1}^E -  (\widetilde{\Sigma}_{t-1}^{(P,E)})^\top = J_{t-1}^E F_{t-1}^P\widehat{\Sigma}_{t-1}^{(P,E)}. 
\end{eqnarray}
By substituting $J_{t-1}^E F_{t-1}^P =  (\widehat{\Sigma}_{t-1}^E -  (\widetilde{\Sigma}_{t-1}^{(P,E)})^\top)(\widehat{\Sigma}_{t-1}^{(P,E)})^{-1} $ and $J_{t-1}^P F_{t-1}^E =  (\widehat{\Sigma}_{t-1}^P -  \widetilde{\Sigma}_{t-1}^{(P,E)})(\widehat{\Sigma}_{t-1}^{(P,E)})^{-1} $ into \eqref{eq:delta0} and by using the fact that $J_{t-1}^EF_{t-1}^P + J_{t-1}^PF_{t-1}^E=I$, we can rewrite $\Delta_{t-1}^{(P,E)}$ as:
\begin{eqnarray}
    \Delta_{t-1}^{(P,E)}  &=& J_{t-1}^EF_{t-1}^P\widetilde{\Sigma}_{t-1}^{(P,E)}(J_{t-1}^PF_{t-1}^E)^\top + J_{t-1}^EF_{t-1}^P\widehat{\Sigma}_{t-1}^P (J_{t-1}^EF_{t-1}^P)^\top\nonumber\\
 && + J_{t-1}^PF_{t-1}^E (\widetilde{\Sigma}_{t-1}^{(P,E)})^\top (J_{t-1}^EF_{t-1}^P)^\top+ J_{t-1}^PF_{t-1}^E\widehat{\Sigma}_{t-1}^E(J_{t-1}^PF_{t-1}^E)^\top\\
 & = &  (\A-\Ct)(\D)^{-1}\C(\D)^{-1} (\B-\C)^\top\label{eq:term1}\\
 && + (\A-\Ct)(\D)^{-1}\B (\D)^{-1} (\A-\Ct)^\top \label{eq:term2}\\
 && + (\B-\C)(\D)^{-1} (\C)^\top (\D)^{-1} (\A-\Ct)^\top\label{eq:term3}\\
&& + (\B-\C)(\D)^{-1}\A (\D)^{-1} (\B-\C)^\top.\label{eq:term4}
 \end{eqnarray}
Given the fact that $\A-\Ct = \D -(\B-\C)$, we have
\begin{eqnarray}
    \eqref{eq:term1}  &=& \C (\D)^{-1}(\B-\Ct) \label{eq:term1a}\\
    && - (\B-\C)(\D)^{-1}\C (\D)^{-1} (\B-\Ct).\label{eq:term1b}
\end{eqnarray}
Similarly,  $\B-\C = \D-(\A-\Ct)$ leads to the following relationship
\begin{eqnarray}
     \eqref{eq:term3} &=& \Ct (\D)^{-1} (\A-\C) \label{eq:term3a}\\
     && -(\A-\Ct)(\D)^{-1}\Ct (\D)^{-1} (\A-\C).
\label{eq:term3b}
\end{eqnarray}
Combine \eqref{eq:term1b} and \eqref{eq:term4}, we have
\begin{eqnarray}
    \eqref{eq:term1b} + \eqref{eq:term4} = (\B-\C)(\D)^{-1} (\A-\C)(\D)^{-1}(\B-\C)^\top.\label{eq:term5}
\end{eqnarray}
Combine \eqref{eq:term3b} and \eqref{eq:term2}, we have
\begin{eqnarray}
    \eqref{eq:term3b} + \eqref{eq:term2} = (\A-\Ct)(\D)^{-1} (\B-\C)^\top(\D)^{-1}(\A-\C).\label{eq:term6}
\end{eqnarray}
It is easy to check that  $\eqref{eq:term5} + \eqref{eq:term6} = (\F)^\top (\D)^{-1}(\G)^\top.$ Combining with \eqref{eq:term1a} and \eqref{eq:term3a}, we have
\begin{eqnarray}
    \Delta_{t-1}^{(P,E)} = (\F)(\D)^{-1}(\G)^{\top} + \C.
\end{eqnarray}

Finally we  show that $\widehat{\Sigma}_{t-1}^{(P,E)}$ is positive definite, which guarantees  that $F_{t-1}^E\widehat{\Sigma}_{t-1}^{(P,E)}\widehat{\Sigma}_{t-1}^{(P,E)} (F_{t-1}^E)^\top$ is of rank $k$. To see this,
we have 
\begin{eqnarray*}
    \widehat{\Sigma}_t^{(P,E)} &=& \mathbb{E}\left[\big(e^E_t-e^P_t\big)\big(e^E_t-e^P_t\big)^{\top} \right]= \mathbb{E}[s_{t-1}(s_{t-1})^\top] + K_t^PG^P(K_t^P)^\top + K_t^E G^E (K_t^E)^\top,
\end{eqnarray*}
with $s_{t-1}$ defined as
\begin{eqnarray*}
    s_{t-1} &=& (I-K_{t}^EH_{t}^E)A_{t-1}\,{\Big((I-J_{t-1}^EF_{t-1}^P)e_{t-1}^E +J_{t-1}^EF_{t-1}^P e_{t-1}^P\Big)}+ (K_{t}^EH_{t}^E-I)\Gamma_{t-1} w_{t-1} \\
    && -  (I-K_{t}^PH_{t}^P)A_{t-1}\,{\Big((I-J_{t-1}^PF_{t-1}^E)e_{t-1}^P +J_{t-1}^PF_{t-1}^E e_{t-1}^E\Big)}- (K_{t}^PH_{t}^P-I)\Gamma_{t-1} w_{t-1}. 
\end{eqnarray*}
It is easy to see that $\mathbb{E}[s_{t-1}(s_{t-1})^\top]$ is positive semi-definite.  $K_t^PG^P(K_T^P)^\top$  is positive definite since  $K_t^P$ has rank $n$ and $G^p$ is positive definite.
Similarly, $K_t^EG^E(K_T^E)^\top$  is also positive definite.
\end{proof}

\vspace{-0.2cm}
\subsection{Conditional Expectation and Tower Property}\label{sec:tower_property}

In partially observable game settings, players face the difficult task of incrementally estimating unknown quantities through information filtering, and then using those estimates to make informed decisions. In the linear-quadratic framework, each player~$i$ needs to determine
\begin{equation}
\mathbb{E}\left.\Big[x_t\,\right|\,\mathcal{H}_t^i\Big], \text{ and } \mathbb{E}\Big[x^\top_t O_t x_t\,|\,\mathcal{H}_t^i\Big],\label{eq:conditional_expectations}
\end{equation}
for any given matrix $O_t\in \mathbb{R}^{n\times n}$. This requires projection of the unknown quantity into the space spanned by the information filtration $\mathcal{H}_t^i$. Given that $\mathcal{H}_t^i$ contains information on the opponent's action $u_{t-1}^{j}$, the critical challenge boils down to how to utilize this information
so that the conditional expectation can be calculated in a valid incremental form to facilitate further analysis and renders the game amenable to solution by dynamic programming. 

Theorem \ref{thm:sufficient_statistics} provides an explicit formula to calculate \eqref{eq:conditional_expectations} in an incremental format:
\begin{equation}
\mathbb{E}\left.\Big[x_t\,\right|\,\mathcal{H}_t^i\Big]  = \widehat{x}_t^i, \text{ and } 
\mathbb{E}\Big[x^\top_t O_t x_t\,|\,\mathcal{H}_t^i\Big] 
= (\widehat{x}^i_t)^\top O_t \widehat{x}^i_t + \Tr \left(O_t\widehat{\Sigma}_t^i\right),
\end{equation}
with $ \widehat{x}_t^i$ and $\widehat{\Sigma}_t^i$ following the explicit recursive formats in \eqref{eq:x_hat_eq3} and \eqref{eq:x_hat_eq2}, respectively.

Indeed,  a similar setup was first studied in \cite{pachter2017lqg}, in which zero-sum linear-quadratic  dynamic games  with partial observations and asymmetric information are considered. The players' initial state estimate and their
measurements are private information, but each player is able to observe his opponent's past
control inputs, so the players' past controls are shared information. However, when the conditional expectation $\mathbb{E}[\cdot|\mathcal{H}_t^i]$ is calculated, the  recursive format proposed in \cite{pachter2017lqg} follows the single-agent Bayes formula (see Equations (11)-(20) in \cite{pachter2017lqg} or similarly \eqref{eq:x_hat_eq1_SA}-\eqref{eq:x_hat_eq2_SA}) and does not utilize the observable information of the opponent's past controls to improve their state estimation, leading to an incorrect formula and hence the tower property fails to hold, let alone the DPP.

To be mathematically more concrete, we do a sanity check to show that the tower property holds when using the recursive expression  in \eqref{eq:J_Kalman}-\eqref{eq:Sigma_hat_ij}.  Namely, from player $P$'s perspective, we have 
\begin{equation}\label{tower_pro}
\mathbb{E}\left[\left.\mathbb{E}\left[\left. x_t^\top Q_t^P x_t\right|\mathcal{H}_t^P\right]\right|\mathcal{H}_{t-1}^P\right]=\mathbb{E}\left[\left. x_t^\top Q_t^P x_t\right|\mathcal{H}_{t-1}^P\right]
\end{equation}
holds when using \eqref{eq:J_Kalman}-\eqref{eq:Sigma_hat_ij} to unwind the conditional expectations. 
To do so, we will first calculate both the LHS and the RHS of \eqref{tower_pro} using \eqref{eq:J_Kalman}-\eqref{eq:Sigma_hat_ij}, and then match all the terms to prove that the LHS equals the RHS. Finally, we will show that the tower property fails to hold when using the formulas in Equations (11)-(20) of \cite{pachter2017lqg}  to unwind the conditional expectations.

\paragraph{Calculations using  \eqref{eq:J_Kalman}-\eqref{eq:Sigma_hat_ij}.}
For the LHS of \eqref{tower_pro},  by \eqref{eq:x_hat_eq3},
\begin{eqnarray}
  \widehat{x}_T^P &=& \big(\widehat{x}_T^P\big)^{-} + K_T^P \left[ z_T^P -H_T^P\big(\widehat{x}_T^P\big)^{-}\right]\nonumber\\
     &=& \big(A_{T-1}(\widehat{x}_{T-1}^P)^{+}+B^P_{T-1}u^P_{T-1}+B^E_{T-1}u^E_{T-1}\big) + K_T^Pw_T^P\label{eqn:x_hat_der_3}\\
     && +K_T^P H_T^P\left[ A_{T-1}\big(x_{T-1}-\big(\widehat{x}_{T-1}^P\big)^{+}\big)+\Gamma_{T-1}w_{T-1}\right]\nonumber,
\end{eqnarray}
where \eqref{eqn:x_hat_der_3} holds by definition of $z_T^P$ given in \eqref{eq:observation_P}, and \eqref{eq:x_hat_eq1}. We will just consider the case when $F_t^P$ has rank $m<n$ and $F_t^E$ has rank $k< n$ for all $t=0,1,\ldots,T-1$, as the other cases will follow the same 
logic. Define $\Pi_{T-1}^P:=\big(\widehat{\Sigma}_{T-1}^P -  \widetilde{\Sigma}_{T-1}^{(P,E)}\big)\big(\widehat{\Sigma}_{T-1}^{(P,E)}\big)^{-1}$, then  \eqref{eq:J_Sig_relation} becomes
\begin{eqnarray}\label{eq:J_Pi_relation}
  \Pi_{T-1}^P= J_{T-1}^P F_{T-1}^E. 
\end{eqnarray}
We can then rewrite \eqref{eq:x_hat_eq4} as
\[
 (\widehat{x}_{T-1}^P)^{+} = 
     (I-\Pi_{T-1}^P)\widehat{x}_{T-1}^P + \Pi_{T-1}^P \widehat{x}_{T-1}^{E}=(I-\Pi_{T-1}^P)\widehat{x}_{T-1}^P + \Pi_{T-1}^P (\widehat{x}_{T-1}^P-e_{T-1}^P+e_{T-1}^E).
\]
Using this equation in \eqref{eqn:x_hat_der_3} we obtain
\begin{eqnarray}
  \widehat{x}_T^P
     =(A_{T-1}+B^P_{T-1}F_{T-1}^P+B^E_{T-1}F_{T-1}^E)\widehat{x}_{T-1}^P + L_1e_{T-1}^E + L_2 e_{T-1}^P+ K_T^Pw_T^P+K_T^P H_T^P\Gamma_{T-1}w_{T-1},\label{eqn:x_hat_L1L2}
\end{eqnarray}
with
\begin{eqnarray}
    L_1 &:=& A_{T-1}\Pi_{T-1}^P + B^E_{T-1}F_{T-1}^E - K_T^PH_T^PA_{T-1}\Pi_{T-1}^P,\label{eqn:L1}\\
L_2 &:=& - A_{T-1}\Pi_{T-1}^P - B^E_{T-1}F_{T-1}^E - K_T^PH_T^PA_{T-1}(I-\Pi_{T-1}^P).\label{eqn:L2}
\end{eqnarray}
Then substituting \eqref{eqn:x_hat_L1L2} into the LHS of \eqref{tower_pro}, we have
\begin{eqnarray}
&&\mathbb{E}\left[\left.\mathbb{E}\left[\left. x_T^\top Q_T^P x_T\right|\mathcal{H}_T^P\right]\right|\mathcal{H}_{T-1}^P\right] =\mathbb{E}\left[\left. (\widehat{x}_T^P)^\top Q_T^P\widehat{x}_T^P \right|\mathcal{H}_{T-1}^P\right]+ \Tr(Q_T^P\widehat{\Sigma}_T^P)\nonumber\\
&=&(\widehat{x}_{T-1}^P)^\top \left(A_{T-1}+B^P_{T-1}F_{T-1}^P+B^E_{T-1}F_{T-1}^E\right)^\top Q_T^P \left(A_{T-1}+B^P_{T-1}F_{T-1}^P+B^E_{T-1}F_{T-1}^E\right) \widehat{x}_{T-1}^P\nonumber\\
&&+ \Tr(L_1^\top Q_T^P L_1 \widehat{\Sigma}_{T-1}^E) + \Tr(L_2^\top Q_T^P L_2 \widehat{\Sigma}_{T-1}^P) + 2\Tr(L_1^\top Q_T^P L_2\widetilde{\Sigma}_{T-1}^{(P,E)}) \nonumber\\
&&+ \Tr((K_T^P)^\top Q_T^P K_T^P G^P) + \Tr(\Gamma_{T-1}^\top (H_T^P)^\top (K_T^P)^\top Q_T^P K_T^P H_T^P\Gamma_{T-1}W)+ \Tr(Q_T^P\widehat{\Sigma}_T^P).\label{eqn:LHS_tower}
\end{eqnarray}
For the RHS of \eqref{tower_pro}, we have by expanding $x_{T}$ directly, 
\begin{eqnarray}
    &&\mathbb{E}\left[\left. x_T^\top Q_T^P x_T\right|\mathcal{H}_{T-1}^P\right] \nonumber\\ 
    &=& (\widehat{x}_{T-1}^P)^\top \left(A_{T-1}+B^P_{T-1}F_{T-1}^P+B^E_{T-1}F_{T-1}^E\right)^\top Q_T^P \left(A_{T-1}+B^P_{T-1}F_{T-1}^P+B^E_{T-1}F_{T-1}^E\right) \widehat{x}_{T-1}^P\nonumber\\
    &&+\Tr(\Gamma_{T-1}^\top Q_T \Gamma_{T-1} W)+ \Tr\big((A_{T-1}+B^E_{T-1}F_{T-1}^E)^\top Q_T^P (A_{T-1}+B^E_{T-1}F_{T-1}^E) \widehat{\Sigma}_{T-1}^P\big)  \nonumber\\
    &&+ \Tr\big((F_{T-1}^E)^\top (B^E_{T-1})^\top Q_T^P B^E_{T-1}F_{T-1}^E \widehat{\Sigma}_{T-1}^E\big)- 2\Tr\big((A_{T-1}+B^E_{T-1}F_{T-1}^E)^\top Q_T^P B^E_{T-1}F_{T-1}^E \widetilde{\Sigma}_{T-1}^{(E,P)}\big).\nonumber\\\label{eqn:RHS_tower}
\end{eqnarray}

The proof that \eqref{eqn:LHS_tower} is equivalent to \eqref{eqn:RHS_tower} is deferred to Appendix \ref{app:tower}.

\vspace{-0.2cm}
\paragraph{Calculations using the result in \cite{pachter2017lqg}.}

In \cite{pachter2017lqg}, the authors used the following recursive formulas which are essentially the same as the single agent case (see Theorem \ref{thm:single_agent_filtering}):
\begin{eqnarray}
x_t \sim \mathcal{N}(\widehat{x}_t^{i},\widehat{\Sigma}_t^i),
\end{eqnarray}
where $\widehat{x}_t^{i}$ and $\widehat{\Sigma}_t^i$ are updated according to \eqref{eq:x_hat_eq1_SA}-\eqref{eq:x_hat_eq2_SA}
Now we use the recursive formula listed in \eqref{eq:x_hat_eq1_SA}-\eqref{eq:x_hat_eq2_SA} to calculate the LHS and RHS of \eqref{tower_pro}. For the LHS, by direct calculation,
\begin{eqnarray}
&&\mathbb{E} \left.\left[(\widehat{x}^P_{T})^\top Q_T^P \widehat{x}^P_{T}\right|\mathcal{H}^P_{T-1}\right]+ \Tr(Q_T^P\widehat{\Sigma}_T^P)\nonumber\\
&=&  (\widehat{x}_{T-1}^P)^{\top}A_{T-1}^{\top}
{Q}^P_T A_{T-1}\widehat{x}_{T-1}^P + (\widehat{x}_{T-1}^P)^\top(F^P_{T-1})^{\top}(B_{T-1}^P)^{\top}{Q}_T^P B_{T-1}^P F^P_{T-1}\widehat{x}_{T-1}^P \nonumber\\
&&+(F_{T-1}^E\widehat{x}_{T-1}^P)^{\top} ((B_{T-1}^E)^{\top}{Q}_T^PB_{T-1}^E)F_{T-1}^E\widehat{x}_{T-1}^P + \Tr( (F_{T-1}^E)^\top (B_{T-1}^E)^{\top}{Q}_T^PB_{T-1}^EF_{T-1}^E\widehat{\Sigma}_{T-1}^{(P,E)})\nonumber\\
&&+2 (F_{T-1}^P \widehat{x}_{T-1}^P)^{\top} (B_{T-1}^P)^{\top} {Q}_T^P A_{T-1}\widehat{x}_{T-1}^P +2(F_{T-1}^P \widehat{x}_{T-1}^P)^{\top}(B_{T-1}^P)^{\top}{Q}_T^P B_{T-1}^EF_{T-1}^E\widehat{x}_{T-1}^P \nonumber\\
&&+2(F_{T-1}^E \widehat{x}_{T-1}^E)^{\top} (B_{T-1}^E)^{\top} {Q}_T^PA_{T-1} \widehat{x}^P_{T-1}-2\Tr( (B_{T-1}^EF_{T-1}^E)^\top Q_T^P K_T^P H_{T}^PA_{T-1}\widetilde{\Sigma}_{T-1}^{(P,E)})\nonumber\\
&&+2\Tr( (B_{T-1}^EF_{T-1}^E)^\top Q_T^P K_T^P H_T^P A_{T-1}\widehat{\Sigma}_{T-1}^{P})+\Tr\left(\Gamma_{T-1}^\top (H_{T}^P)^\top (K_{T}^P)^\top Q_T^P K_T^P H_{T}^P \Gamma_{T-1}W\right)\nonumber\\
&&  + \Tr\left((K_T^P)^\top Q_T^P K_T^P G^P\right) +\Tr\left(A_{T-1}^\top (H_{T}^P)^\top (K_T^P)^\top Q_T^P K_T^PH_{T}^PA_{T-1}\widehat{\Sigma}_{T-1}^P\right)+ \Tr(Q_T^P\widehat{\Sigma}_T^P).\label{eq:wrong_LHS}
\end{eqnarray}
On the other hand,
\begin{eqnarray}
 &&\mathbb{E}\left.\left[x_T^{\top}Q_T^Px_T\right|\mathcal{H}^P_{T-1}\right]\nonumber\\
 &=& (\widehat{x}_{T-1}^P)^{\top}A_{T-1}^{\top}
 {Q}^P_T A_{T-1}\widehat{x}_{T-1}^P + (F_{T-1}^P \widehat{x}_{T-1}^P)^{\top} (B_{T-1}^P)^{\top}{Q}_T^P B_{T-1}^P(F_{T-1}^P \widehat{x}_{T-1}^P)\nonumber\\
 &&+(F_{T-1}^E\widehat{x}_{T-1}^P)^{\top} ((B_{T-1}^E)^{\top}{Q}_T^PB_{T-1}^E)F_{T-1}^E\widehat{x}_{T-1}^P + \Tr( ((B_{T-1}^EF_{T-1}^E)^{\top}{Q}_T^PB_{T-1}^EF_{T-1}^E)\widehat{\Sigma}_{T-1}^{(P,E)}) \nonumber\\
 &&+2 u_{T-1}^{\top} (B_{T-1}^P)^{\top} {Q}_T^P A_{T-1}\widehat{x}_{T-1}^P +2u_{T-1}^{\top}(B_{T-1}^P)^{\top}{Q}_T^P B_{T-1}^E F_{T-1}^E \widehat{x}_{T-1}^P\nonumber\\
 && +2 (F_{T-1}^E \widehat{x}_{T-1}^P)^{\top}(B_{T-1}^E)^{\top} {Q}_T^PA_{T-1} (\widehat{x}^P_{T-1}) + 2\Tr ((B_{T-1}^EF_{T-1}^E)^{\top} {Q}_T^PA_{T-1} \widehat{\Sigma}_{T-1}^P) \nonumber\\
 &&- 2\Tr ((B_{T-1}^EF_{T-1}^E)^{\top}{Q}_T^PA_{T-1} \widetilde{\Sigma}_{T-1}^{(P,E)}) +\Tr (A_{T-1}^{\top}{Q}_T^PA_{T-1}\widehat{\Sigma}_{T-1}^P)+\Tr\left(\Gamma_{T-1}^{\top}{Q}_T^P\Gamma_{T-1}W\right).\label{eq:wrong_RHS}
 \end{eqnarray}
By routine calculations similar to those used for Step 3 in Appendix \ref{app:tower}, we see that \eqref{eq:wrong_LHS} and \eqref{eq:wrong_RHS} differ from each other by:
\begin{eqnarray}
    2 \Tr\left((F_t^E)^{\top}(B_t^E)^{\top}Q_{t+1}^P(K_{t+1}^{P}H_{t+1}^{P}-I)A_t\left(\widetilde{\Sigma}_t^{(E,P)}-\widehat{\Sigma}_t^P\right)\right).
\end{eqnarray}
Hence the recursive formula  \eqref{eq:x_hat_eq1_SA}-\eqref{eq:x_hat_eq2_SA} adopted in \cite{pachter2017lqg} does not lead to the correct conditional expectation, let alone the tower property and DPP.

\section{Equilibrium Solution}\label{sec:equilibrium}
Now that we have the information corrections in the updating scheme, we will use these to discuss the DPP and the Nash equilibrium in this section.
\subsection{Dynamic Programming Principle}
Although both players do not have access to the true state and their controls are related to their state estimates which causes extra  correlated randomness,  we are still able to derive the individual DPP for the two-player general-sum linear-quadratic Gaussian game under a fixed linear and Markovian strategy from the opponent. This is because the sufficient statistics derived from Theorem \ref{thm:sufficient_statistics} leads to a valid tower property given in \eqref{tower_pro}.
This equips us with sufficient tools to prove the DPP. 

To start,  denote the value function of player $i$ ($i=P,E$), under a fixed strategy $F^j := \{F_t^j\}_{t=0}^{T-1}$ from player $j$ ($j\neq i$) and at any given time $0\leq t\leq T-1$, as
\begin{eqnarray}\label{eq:value_function}
V^i_t(\widehat{x}_t^i; F^j) =\left. \min_{\{u_s^i\}_{s=t}^{T-1}}\mathbb{E}\left[x_T^{\top}Q^i_T x_T+ \sum_{s=t}^{T-1}(u^i_t)^{\top}R_t^i u^i_t +x_t^{\top}Q^i_t x_t\right\vert\mathcal{H}_{t}^i\right]
\end{eqnarray}
subject to \eqref{eq:linear_dynamics} and $F^j$, with the terminal value
\begin{eqnarray}
V_{T}^i\left(\widehat{x}^i_{T};F^j\right) = V_{T}^i\left(\widehat{x}^i_{T}\right)=\left.\mathbb{E}\left[x_T^{\top}Q_T^ix_T\,\right|\,\mathcal{H}^i_{T}\right] = \left(\widehat{x}^i_{T}\right)^{\top}Q_T^i\widehat{x}^i_{T} + \Tr \left(Q_T^i\widehat{\Sigma}_{T}^i \right).\label{eqn:terminal_val_P}
\end{eqnarray}

Now we prove the DPP in the two-player general-sum linear-quadratic Gaussian game under partial observations and asymmetric information.
\begin{Theorem}[Dynamic Programming Principle]\label{thm:dpp}
For any given time $0\leq t \leq T-1$,  
 the value function $V^i_t$ for player $i$ $(i=P,E)$, under a fixed policy $F^j:=\{F^j_t\}_{t=0}^{T-1}$ from player $i$, satisfies
\begin{eqnarray}\label{eq:dpp}
V^i_t(\widehat{x}_t^i;F^j) = \min_{u^i_t}\mathbb{E}\left[\left.x_t^{\top}Q_t^ix_t + (u^i_t)^{\top}R_t^iu^i_t + {V}^{i}_{t+1}\left(\widehat{x}_{t+1}^i;F^j\right)\right|\mathcal{H}^i_t\right]
\end{eqnarray}
with $j=P,E$, $i\neq j$, and terminal value $V_T^i(\widehat{x}_T^i;F^j)$ given in \eqref{eqn:terminal_val_P}.
\end{Theorem}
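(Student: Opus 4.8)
The plan is to prove \eqref{eq:dpp} by backward induction on $t$, from $T$ down to $0$, exploiting two facts already in hand. First, by Theorem~\ref{thm:sufficient_statistics}, $\widehat{x}_t^i$ is a sufficient statistic for player~$i$ at time $t$, the attached covariances $\widehat{\Sigma}_t^i,\widetilde{\Sigma}_t^{(i,j)},\widehat{\Sigma}_t^{(i,j)}$ being deterministic functions of the model data and of the fixed opponent policy $F^j$; consequently $V_t^i(\,\cdot\,;F^j)$ is a genuine function of $\widehat{x}_t^i$ alone, so \eqref{eq:dpp} is well-posed. Second, the step-by-step evaluation of conditional expectations through the recursions \eqref{eq:J_Kalman}--\eqref{eq:Sigma_hat_ij} is consistent with the tower property $\mathbb{E}[\mathbb{E}[\,\cdot\,|\mathcal{H}_{t+1}^i]\,|\,\mathcal{H}_t^i]=\mathbb{E}[\,\cdot\,|\mathcal{H}_t^i]$, which is exactly the verification \eqref{tower_pro} carried out in Section~\ref{sec:tower_property} (deferred to Appendix~\ref{app:tower}). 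The base case $t=T$ is immediate from \eqref{eqn:terminal_val_P}.

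For the inductive step, assume the result at $t+1$; in particular $V_{t+1}^i(\widehat{x}_{t+1}^i;F^j)$ is an $\mathcal{H}_{t+1}^i$-measurable, integrable random variable, quadratic in $\widehat{x}_{t+1}^i$ with deterministic coefficients (a property carried along the induction and made explicit by Theorem~\ref{general_result}). Write $\Phi_t^i(\widehat{x}_t^i)$ for the right-hand side of \eqref{eq:dpp}. To get $V_t^i\ge\Phi_t^i$, take any admissible control sequence $\{u_s^i\}_{s=t}^{T-1}$ for the time-$t$ problem (each $u_s^i$ being $\mathcal{H}_s^i$-measurable), split its conditional cost into the time-$t$ running cost plus the cost from $t+1$ onward, insert the conditioning on $\mathcal{H}_{t+1}^i$ and apply the tower property; the inner conditional expectation is at least $V_{t+1}^i(\widehat{x}_{t+1}^i;F^j)$ by definition of the latter as an infimum over admissible continuations, and then minimizing over $u_t^i$ and taking the infimum over the whole sequence gives $V_t^i\ge\Phi_t^i$. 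For the reverse inequality, positive-definiteness of $R_t^i$ makes the minimization defining $\Phi_t^i$ a strictly convex quadratic program, attained at an $\mathcal{H}_t^i$-measurable $\bar{u}_t^i$; by the induction hypothesis the time-$(t+1)$ problem likewise admits a minimizing continuation measurable in $\mathcal{H}_{t+1}^i$. Concatenating $\bar{u}_t^i$ with this continuation is admissible for the time-$t$ problem, and by the tower property its conditional cost equals $\mathbb{E}[x_t^\top Q_t^i x_t+(\bar{u}_t^i)^\top R_t^i\bar{u}_t^i+V_{t+1}^i(\widehat{x}_{t+1}^i;F^j)\,|\,\mathcal{H}_t^i]=\Phi_t^i(\widehat{x}_t^i)$, hence $V_t^i\le\Phi_t^i$. (If one prefers to sidestep measurable selection, an $\varepsilon$-optimal continuation followed by $\varepsilon\downarrow 0$ works just as well.)

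The only step with genuine content, as opposed to bookkeeping, is the application of the tower property through the belief recursions, i.e. that $\mathbb{E}[\mathbb{E}[g(x_{t+1})\,|\,\mathcal{H}_{t+1}^i]\,|\,\mathcal{H}_t^i]=\mathbb{E}[g(x_{t+1})\,|\,\mathcal{H}_t^i]$ when both sides are unwound via \eqref{eq:J_Kalman}--\eqref{eq:Sigma_hat_ij}. For $g$ quadratic this is precisely \eqref{tower_pro}; since that identity is linear in $g$, it extends to every quadratic functional of the state, and the induction only ever needs such $g$ (the running cost, the terminal cost, and $V_{t+1}^i$). One should flag the mild interdependence with Theorem~\ref{general_result}, which supplies the explicit quadratic form and closes the ansatz. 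The remaining points — that, conditioned on $\mathcal{H}_t^i$, the innovations entering $(\widehat{x}_t^i)^{+}$ and $\widehat{x}_{t+1}^i$ in Theorem~\ref{thm:sufficient_statistics} are independent zero-mean Gaussians with deterministic covariances; that admissibility is preserved under concatenation; and that player~$i$'s sufficient statistic is unaffected by the choice of their own (observed) controls — are routine and can be dispatched briefly.
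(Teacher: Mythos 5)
Your proposal is correct and rests on exactly the same key ingredient as the paper's proof: the one-step decomposition of the cost-to-go combined with the tower property \eqref{tower_pro} for the corrected belief recursions \eqref{eq:J_Kalman}--\eqref{eq:Sigma_hat_ij}, with adaptedness of the controls allowing the minimization to be split and exchanged with the conditional expectation. The only difference is presentational: the paper carries out the exchange of $\min$ and $\mathbb{E}[\,\cdot\,|\mathcal{H}_t^i]$ as a direct chain of equalities justified by adaptedness, whereas you justify the same interchange via the standard two-inequality argument with ($\varepsilon$-)optimal continuations, which is a more careful rendering of the same step rather than a different route.
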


\begin{proof}
We take the perspective of player $P$ and the result for player $E$ follows the same logic. By definition of the value function in \eqref{eq:value_function} we have,
\begin{eqnarray}
&&V^P_t(\widehat{x}_t^P;F^E)\nonumber \\
&=& \min_{\{u_s^P\}_{s=t}^{T-1}}\mathbb{E}\left.\left[x_T^{\top}Q^P_T x_T+ \sum_{s=t}^{T-1}(u^P_t)^{\top}R_t^P u^P_t +x_t^{\top}Q^P_t x_t \right\vert\mathcal{H}_t^P\right] \nonumber\\
& = & \min_{u^P_t}\Bigg\{\mathbb{E}\left.\left[x_t^{\top}Q_t^Px_t + (u^P_t)^{\top}R_t^Pu^P_t \right\vert\mathcal{H}_t^P\right]\nonumber\\
&&\quad +\min_{\{u_s^P\}_{s=t+1}^T} \left. \mathbb{E}\left[x_T^{\top}Q^P_T x_T+ \sum_{s=t+1}^{T-1}\left((u_s^P)^{\top}R_s^P u_s^P +x_s^{\top}Q^P_s x_s\right)\right\vert\mathcal{H}_t^{P}\right]\Bigg\}\label{eq:HJB-inter1}\\
& = & \min_{u^P_t}\Bigg\{\mathbb{E}\left.\left[x_t^{\top}Q_t^Px_t + (u^P_t)^{\top}R_t^Pu^P_t \right\vert\mathcal{H}_t^P\right]\nonumber\\
&&\quad +\min_{\{u_s^P\}_{s=t+1}^T}\mathbb{E}\left[ \left. \mathbb{E}\left[\left. x_T^{\top}Q^P_T x_T+ \sum_{s=t+1}^{T-1}\left((u_s^P)^{\top}R_s^P u_s^P +x_s^{\top}Q^P_s x_s\right)\right\vert\mathcal{H}_{t+1}^{P}\right]\right\vert\mathcal{H}_{t}^{P}\right]\Bigg\}\label{eq:HJB-inter2}\\
& = & \min_{u^P_t}\Bigg\{\mathbb{E}\left.\left[x_t^{\top}Q_t^Px_t + (u^P_t)^{\top}R_t^Pu^P_t \right\vert\mathcal{H}_t^P\right]\nonumber\\
&&\quad +\mathbb{E}\left[ \min_{\{u_s^P\}_{s=t+1}^T}\left. \mathbb{E}\left[\left. x_T^{\top}Q^P_T x_T+ \sum_{s=t+1}^{T-1}\left((u_s^P)^{\top}R_s^P u_s^P +x_s^{\top}Q^P_s x_s\right)\right\vert\mathcal{H}_{t+1}^{P}\right]\right\vert\mathcal{H}_{t}^{P}\right]\Bigg\},\label{eq:HJB-inter3}
\end{eqnarray}
where \eqref{eq:HJB-inter1} holds since $u^P_t$ is adapted to $\mathcal{H}_t^P$, \eqref{eq:HJB-inter2} holds by the tower property \eqref{tower_pro}, and \eqref{eq:HJB-inter3} holds since $u_s^P$ is adapted to $\mathcal{H}_s^P$ $(s\ge t+1)$. Finally \eqref{eq:HJB-inter3} leads to the DPP \eqref{eq:dpp} by the definition of $V_{t+1}^P$.
\end{proof}

\subsection{Nash Equilibrium}
In this section, we will show that the Nash equilibrium strategy for the game \eqref{eq:linear_dynamics}-\eqref{eq:initial_belief_P}-\eqref{eq:observation_P}-\eqref{eq:observation_E}-\eqref{eq:cost_P} is related to the solution of a coupled Riccati system. Assumption \ref{ass:exist_riccati} is the existence and uniqueness of this solution and we provide a sufficient condition for this assumption in Remark \ref{remark:suff_unique}.
\begin{Assumption}\label{ass:exist_riccati}
There exists a unique  solution set $F^{P*}:=\{F_t^{P*}\}_{t=0}^{T-1}$ with {$F_t^{P*}\in \mathcal{A}^P$} and $F^{E*}:=\{F_t^{E*}\}_{t=0}^{T-1}$ with {$F_t^{E*}\in \mathcal{A}^E$} to the following set of linear matrix equations:
\begin{eqnarray}
F_t^{P*} &=& -(R_t^P+(B^P_t)^\top U_{t+1}^{P*}B^P_t)^{-1}\big((B^P_t)^\top U_{t+1}^{P*}(A_t+B^E_tF_t^{E*})\big),\label{eqn:opt_FP_t} \\
F_t^{E*} &=& -(R_t^E+(B^E_t)^\top U_{t+1}^{E*}B^E_t)^{-1}\big((B^E_t)^\top U_{t+1}^{E*}(A_t+B^P_tF_t^{P*})\big),\label{eqn:opt_FE_t}
\end{eqnarray}
where $\{U_t^{P*}\}_{t=0}^T$ and $\{U_t^{E*}\}_{t=0}^T$ are obtained recursively backwards from
\begin{eqnarray}\label{eqn:Riccati_Pi_tP}
U_t^{P*} 
&=& Q_t^P + (F_t^{P*})^\top R_t^P F_t^{P*}  +\big(A_t+B^P_tF_t^{P*}+B^E_tF_t^{E*}\big)^\top U_{t+1}^{P*}\big(A_t+B^P_tF_t^{P*}+B^E_tF_t^{E*}\big),\\
U_{t}^{E*} 
&=& Q_t^{E} + (F_t^{E*})^\top R_t^E F_t^{E*}  + \big(A_t+B^P_tF_t^{P*}+B^E_tF_t^{E*}\big)^\top U_{t+1}^{E*}\big(A_t+B^P_tF_t^{P*}+B^E_tF_t^{E*}\big),\label{eqn:Riccati_Pi_tE}
\end{eqnarray}
with terminal conditions $U_{T}^{i*} = Q_T^i$ for $i=P,E$.
\end{Assumption}

\begin{Remark}\label{remark:suff_unique}
{\rm 
A sufficient condition for the unique solvability of \eqref{eqn:Riccati_Pi_tP}-\eqref{eqn:Riccati_Pi_tE} is the invertibility of the block matrix $\Phi_t$, $t=0,1,\cdots,T-1$, with the $ii$-th block given by $R_t^i+(B_t^i)^\top U_{t+1}^{i*}B_t^i$ and the $ij$-th block given by $(B_t^i)^\top U_{t+1}^{i*} B_t^j$, where $i,j=P,E$ and $j\neq i$. See Remark 6.5 in \cite{BasarOlsder1999}.}
\end{Remark}

Using the DPP formula in Theorem \ref{thm:dpp}, we have the following result for the Nash equilibrium strategy and the corresponding value function for the game \eqref{eq:linear_dynamics}-\eqref{eq:initial_belief_P}-\eqref{eq:observation_P}-\eqref{eq:observation_E}-\eqref{eq:cost_P}. 
\begin{Theorem}\label{general_result} Suppose Assumptions \ref{ass:LQGG_model} and \ref{ass:exist_riccati} hold. We also assume that both players are applying linear strategies.  Then the unique Nash equilibrium policy can be expressed as for $i=P,E$
\begin{eqnarray}
u^{i*}_t(\widehat{x}_t^i) = F^{i*}_t \widehat{x}_t^i,\label{opt_policy_P}
\end{eqnarray}
with $F_t^{P*}$ and $F_t^{E*}$ given in \eqref{eqn:opt_FP_t} and \eqref{eqn:opt_FE_t}. The corresponding optimal value function of player $i$ is quadratic $(0\leq t \leq T)$:
\begin{equation}
V^i_{t}(\widehat{x}_t^i;F^{j*}) = (\widehat{x}_t^i)^{\top}U_t^{i*}\widehat{x}_t^i+c^{i*}_{t}\label{value_qua_P},
\end{equation}
where $j=P,E$ and $j\neq i$, the matrices $U_t^{P*},U_t^{E*}\in \mathbb{R}^{n\times n}$ are given in \eqref{eqn:Riccati_Pi_tP} and \eqref{eqn:Riccati_Pi_tE},
and the scalars $c^{P*}_{t},c^{E*}_{t}\in \mathbb{R}$ are given by
\begin{eqnarray}
c_t^{i*}&=& c_{t+1}^{i*} +  \Tr\left(Q_t^i\widehat{\Sigma}_t^i \right) - \Tr\left(U_{t+1}^{i*}\widehat{\Sigma}_{t+1}^i \right) +\Tr\big((A_{t}+B_t^jF_{t}^{j*})^\top U_{t+1}^{i*} (A_t+B_t^jF_{t}^{j*}) \widehat{\Sigma}_{t}^i\big) \nonumber\\
&&+ \Tr\big((F_{t}^{j*})^\top  (B_t^j)^\top U_{t+1}^{i*} B_t^jF_{t}^{j*} \widehat{\Sigma}_{t}^j\big)- 2\Tr\big((A_{t}+B_t^jF_{t}^{j*})^\top U_{t+1}^{i*} B_t^jF_{t}^{j*} \widetilde{\Sigma}_{t}^{(j,i)}\big) \nonumber\\
&&+\Tr(\Gamma_{t}^\top U_{t+1}^{i*} \Gamma_{t} W)\label{eqn:c_tP}.
\end{eqnarray}
The terminal condition for player $i$ is $c_T^{i*} = \Tr\left(Q_T^i\widehat{\Sigma}_T^i\right)$.
\end{Theorem}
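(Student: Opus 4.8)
The plan is to prove the theorem by backward induction on $t$, using the Dynamic Programming Principle (Theorem~\ref{thm:dpp}) together with the explicit belief recursions of Theorem~\ref{thm:sufficient_statistics}. Fix player $j$'s strategy at $F^{j*}$, and take as induction hypothesis that $V^i_{t+1}(\widehat{x}^i_{t+1};F^{j*}) = (\widehat{x}^i_{t+1})^\top U^{i*}_{t+1}\widehat{x}^i_{t+1} + c^{i*}_{t+1}$ with $U^{i*}_{t+1}\succeq 0$. The base case $t=T$ is precisely the terminal value \eqref{eqn:terminal_val_P}, with $U^{i*}_T = Q^i_T\succeq 0$ and $c^{i*}_T = \Tr(Q^i_T\widehat{\Sigma}^i_T)$; positive semi-definiteness of every $U^{i*}_t$ then propagates backward along \eqref{eqn:Riccati_Pi_tP}--\eqref{eqn:Riccati_Pi_tE} because $Q^i_t\succeq 0$ and $R^i_t\succeq 0$.

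For the inductive step I insert the hypothesis into the DPP \eqref{eq:dpp} and must evaluate $\mathbb{E}[x_t^{\top} Q^i_t x_t\,|\,\mathcal{H}^i_t]$ and $\mathbb{E}[(\widehat{x}^i_{t+1})^\top U^{i*}_{t+1}\widehat{x}^i_{t+1}\,|\,\mathcal{H}^i_t]$. The first equals $(\widehat{x}^i_t)^\top Q^i_t\widehat{x}^i_t + \Tr(Q^i_t\widehat{\Sigma}^i_t)$ from \eqref{eq:belief_process}. For the second, rather than expanding $\widehat{x}^i_{t+1}$ through the Kalman recursion, I use the identity $(\widehat{x}^i_{t+1})^\top U^{i*}_{t+1}\widehat{x}^i_{t+1} = \mathbb{E}[x_{t+1}^{\top}U^{i*}_{t+1}x_{t+1}\,|\,\mathcal{H}^i_{t+1}] - \Tr(U^{i*}_{t+1}\widehat{\Sigma}^i_{t+1})$ (valid since $x_{t+1}\,|\,\mathcal{H}^i_{t+1}\sim\mathcal{N}(\widehat{x}^i_{t+1},\widehat{\Sigma}^i_{t+1})$ with $\widehat{\Sigma}^i_{t+1}$ deterministic), then apply the tower property \eqref{tower_pro} to reduce to $\mathbb{E}[x_{t+1}^{\top}U^{i*}_{t+1}x_{t+1}\,|\,\mathcal{H}^i_t] - \Tr(U^{i*}_{t+1}\widehat{\Sigma}^i_{t+1})$; this is exactly the place where the information-corrected sufficient statistics are indispensable, as in the sanity check of Section~\ref{sec:tower_property}. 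Expanding $x_{t+1} = A_tx_t + B^P_t u^P_t + B^E_t u^E_t + \Gamma_t w_t$ with player $j$'s policy $u^j_t = F^{j*}_t\widehat{x}^j_t$, and substituting $x_t = \widehat{x}^i_t - e^i_t$ and $\widehat{x}^j_t = \widehat{x}^i_t - e^i_t + e^j_t$ (from \eqref{eq:estimation_error_relationship}--\eqref{eq:player_i_view_on_xj}), writes $x_{t+1} = (A_t+B^j_tF^{j*}_t)\widehat{x}^i_t + B^i_t u^i_t + \xi_{t+1}$ with $\xi_{t+1} := -(A_t+B^j_tF^{j*}_t)e^i_t + B^j_tF^{j*}_t e^j_t + \Gamma_t w_t$, which is conditionally mean zero given $\mathcal{H}^i_t$ and has conditional second moments given by the deterministic matrices $\widehat{\Sigma}^i_t$, $\widehat{\Sigma}^j_t$, $\widetilde{\Sigma}^{(i,j)}_t$ and $W$. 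Hence $\mathbb{E}[x_{t+1}^{\top}U^{i*}_{t+1}x_{t+1}\,|\,\mathcal{H}^i_t] = \mu^\top U^{i*}_{t+1}\mu + \mathbb{E}[\xi_{t+1}^\top U^{i*}_{t+1}\xi_{t+1}\,|\,\mathcal{H}^i_t]$ with $\mu := (A_t+B^j_tF^{j*}_t)\widehat{x}^i_t + B^i_t u^i_t$, and the second term reproduces exactly the four explicit trace terms of \eqref{eqn:c_tP} involving $\widehat{\Sigma}^i_t,\widehat{\Sigma}^j_t,\widetilde{\Sigma}^{(j,i)}_t$ and $W$, once the $e^i_t$--$e^j_t$ cross term is rewritten using $\Tr(M)=\Tr(M^\top)$, symmetry of $U^{i*}_{t+1}$, and $\widetilde{\Sigma}^{(j,i)}_t = (\widetilde{\Sigma}^{(i,j)}_t)^\top$.

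Collecting the pieces, the bracket inside the DPP \eqref{eq:dpp} equals a constant (the four trace terms, $\Tr(Q^i_t\widehat{\Sigma}^i_t)$, $c^{i*}_{t+1}$, and $-\Tr(U^{i*}_{t+1}\widehat{\Sigma}^i_{t+1})$) plus $(\widehat{x}^i_t)^\top Q^i_t\widehat{x}^i_t$ plus $\min_{u^i_t}\{(u^i_t)^\top R^i_t u^i_t + \mu^\top U^{i*}_{t+1}\mu\}$. Since $R^i_t + (B^i_t)^\top U^{i*}_{t+1}B^i_t\succ 0$, the inner problem is strictly convex with unique minimizer $u^{i*}_t = -(R^i_t+(B^i_t)^\top U^{i*}_{t+1}B^i_t)^{-1}(B^i_t)^\top U^{i*}_{t+1}(A_t+B^j_tF^{j*}_t)\widehat{x}^i_t = F^{i*}_t\widehat{x}^i_t$, matching \eqref{eqn:opt_FP_t}--\eqref{eqn:opt_FE_t}; by Assumption~\ref{ass:exist_riccati} this $F^{i*}_t$ lies in $\mathcal{A}^i$, so the minimizer is a linear feedback consistent with the standing linearity assumption and the belief recursions used remain valid. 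Substituting $u^{i*}_t = F^{i*}_t\widehat{x}^i_t$ back, the completion-of-squares identity collapses the coefficient of $(\widehat{x}^i_t)^\top(\cdot)\widehat{x}^i_t$ to $Q^i_t + (F^{i*}_t)^\top R^i_t F^{i*}_t + (A_t+B^i_tF^{i*}_t+B^j_tF^{j*}_t)^\top U^{i*}_{t+1}(A_t+B^i_tF^{i*}_t+B^j_tF^{j*}_t) = U^{i*}_t$ as in \eqref{eqn:Riccati_Pi_tP}--\eqref{eqn:Riccati_Pi_tE}, and the constant to $c^{i*}_t$ as in \eqref{eqn:c_tP}, which closes the induction. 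Since the argument applies simultaneously to $i=P$ and $i=E$, $(F^{P*},F^{E*})$ is a Nash equilibrium; for uniqueness within linear feedback strategies, any Nash pair must have each player best-responding, and the backward induction above identifies the best-response conditions with the coupled system \eqref{eqn:opt_FP_t}--\eqref{eqn:opt_FE_t} and \eqref{eqn:Riccati_Pi_tP}--\eqref{eqn:Riccati_Pi_tE}, whose solution is unique by Assumption~\ref{ass:exist_riccati}.

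The main obstacle is the computation in the second paragraph, namely showing $\mathbb{E}[\xi_{t+1}^\top U^{i*}_{t+1}\xi_{t+1}\,|\,\mathcal{H}^i_t]$ reproduces precisely the trace terms of \eqref{eqn:c_tP}. This hinges on (i) justifying that, conditionally on $\mathcal{H}^i_t$, the estimation errors $e^i_t$ and $e^j_t$ have zero mean and deterministic second moments $\widehat{\Sigma}^i_t$, $\widehat{\Sigma}^j_t$, $\widetilde{\Sigma}^{(i,j)}_t$ — which relies on the joint Gaussianity and the path-independence of these error covariances built into Theorem~\ref{thm:sufficient_statistics} and the relation \eqref{eq:player_i_view_on_xj} — and (ii) the trace/transpose manipulation that turns the $e^i_t$--$e^j_t$ cross term into the $-2\Tr((A_t+B^j_tF^{j*}_t)^\top U^{i*}_{t+1}B^j_tF^{j*}_t\widetilde{\Sigma}^{(j,i)}_t)$ term. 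Everything downstream is the classical discrete-time LQG backward induction and introduces no new ideas; moreover, since this step only uses the dynamics and $u^j_t = F^{j*}_t\widehat{x}^j_t$ rather than the detailed form of the Kalman recursion, it is insensitive to which of the four rank cases of Theorem~\ref{thm:sufficient_statistics} applies.
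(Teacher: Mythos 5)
Your proposal is correct and follows the paper's overall skeleton — backward induction via the DPP of Theorem \ref{thm:dpp}, pointwise first-order minimization giving $F_t^{i*}$ as in \eqref{eqn:opt_FP_t}--\eqref{eqn:opt_FE_t}, and identification of the quadratic coefficient with the Riccati recursion \eqref{eqn:Riccati_Pi_tP}--\eqref{eqn:Riccati_Pi_tE} — but you handle the key step, the evaluation of $\mathbb{E}[V^i_{t+1}(\widehat{x}^i_{t+1};F^{j*})\,|\,\mathcal{H}^i_t]$, by a genuinely different decomposition. The paper expands $\widehat{x}^i_{t+1}$ through the information-corrected Kalman recursion as in \eqref{eqn:x_hat_L1L2} (the $L^1,L^2,K,\Pi$ terms), which yields the constant in the ``LHS'' form and then requires the lengthy trace manipulations of Appendix \ref{app:tower} (the equivalence of \eqref{eqn:LHS_tower} and \eqref{eqn:RHS_tower}) to convert it into the stated expression \eqref{eqn:c_tP}. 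You instead write $(\widehat{x}^i_{t+1})^\top U^{i*}_{t+1}\widehat{x}^i_{t+1}=\mathbb{E}[x_{t+1}^\top U^{i*}_{t+1}x_{t+1}|\mathcal{H}^i_{t+1}]-\Tr(U^{i*}_{t+1}\widehat{\Sigma}^i_{t+1})$, invoke the tower property \eqref{tower_pro}, and expand $x_{t+1}$ directly through the dynamics and the error identities of Theorem \ref{thm:sufficient_statistics}; this produces \eqref{eqn:c_tP} immediately (it is exactly the ``RHS'' form), and your cross-term/transpose bookkeeping for the $\widetilde{\Sigma}^{(j,i)}_t$ term is right. What each route buys: the paper's is self-contained modulo Appendix \ref{app:tower} but algebra-heavy, while yours is shorter and makes transparent why \eqref{eqn:c_tP} has the stated structure; the price is that you lean on \eqref{tower_pro} in slightly greater generality than it is literally verified in Section \ref{sec:tower_property} — with the weight matrix $U^{i*}_{t+1}$ rather than $Q^P_T$, at intermediate times, and with the player's own control $u^i_t$ still free inside the minimization rather than already in linear-feedback form. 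Since the covariance recursions are control-independent, the filter mean is affine in $u^i_t$ with the same coefficient as the dynamics, and the Appendix \ref{app:tower} computation is generic in the symmetric matrix and the time index, this extension is routine (and the paper's own proof implicitly relies on the same facts), but a complete write-up should say so explicitly. Your additional remarks — propagating $U^{i*}_t\succeq 0$ to get strict convexity of the inner problem, and reducing uniqueness of the linear-feedback equilibrium to the unique solvability of the coupled system under Assumption \ref{ass:exist_riccati} — are consistent with (and slightly more explicit than) the paper.
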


\begin{Remark}[Discussion of linear policies]{\rm
    \begin{enumerate}
    \item In the partially observable setting,  it is widely recognized that the existence of a Nash equilibrium is not guaranteed if  a more general class of policies is considered, as players can mislead their opponents by disclosing false intentions \cite{fudenberg1991game,witsenhausen1968counterexample}.
        \item We note that the optimal policies $F_t^{P*}$ and $F_t^{E*}$ given in \eqref{eqn:opt_FP_t} and \eqref{eqn:opt_FE_t}, and the Riccati equations given in \eqref{eqn:Riccati_Pi_tP} and \eqref{eqn:Riccati_Pi_tE}, are the same as the optimal policies and Riccati equations in the case of full observation (\cite[Corollary 6.4]{BasarOlsder1999}). However,  the linear-quadratic Gaussian game under partial observation (defined in \eqref{eq:linear_dynamics}-\eqref{eq:initial_belief_P}-\eqref{eq:observation_P}-\eqref{eq:observation_E}-\eqref{eq:cost_P}) differs from the linear-quadratic game with {\it full information} in \cite[Corollary 6.4]{BasarOlsder1999} in the sense that the Nash equilibrium strategy is linear in the {\it state estimate} rather than the {\it true state}, and the scalars $c_t^{P*}$ and $c_t^{E*}$ in the value function involve more terms due to the errors in state estimation.
    \end{enumerate}}
\end{Remark}

\begin{proof}
We prove the theorem by backward induction. We take the perspective of player $P$ and let player $E$ use the linear strategy $F^{E*}=\{F_t^{E*}\}_{t=0}^{T-1}$ defined in \eqref{eqn:opt_FE_t}. At time $T$, \eqref{value_qua_P} holds by the terminal condition given in \eqref{eqn:terminal_val_P}. At time $T-1$, by Theorem \ref{thm:dpp}, we have the DPP for player $P$:
\begin{equation*}
V^P_{T-1}(\widehat{x}_{T-1}^P;F^{E*}) =\left. \min_{u^P_{T-1}}\mathbb{E}\left[x_{T-1}^{\top}Q_{T-1}^Px_{T-1} + (u^P_{T-1})^{\top}R_{T-1}^Pu^P_{T-1} + {V}^{P}_{T}\left(\widehat{x}_{T}^P;F^{E*}\right)\right|\mathcal{H}^P_{T-1}\right],
\end{equation*}
and
by \eqref{eqn:x_hat_L1L2},
\begin{equation}
  \widehat{x}_T^P
     =(A_{T-1}+B^E_{T-1}F_{T-1}^{E*})\widehat{x}_{T-1}^P  +B^P_{T-1}u^P_{T-1} + L_{T-1}^1e_{T-1}^E + L_2 e_{T-1}^P+ K_T^Pw_T^P+K_T^P H_T^P\Gamma_{T-1}w_{T-1},\nonumber
\end{equation}
with $L_{T-1}^1$ and $L_{T-1}^2$ defined as
$L_{T-1}^1 = A_{T-1}\Pi_{T-1}^P + B^E_{T-1}F_{T-1}^{E*} - K_T^PH_T^PA_{T-1}\Pi_{T-1}^P$ and $
L_{T-1}^2 = - A_{T-1}\Pi_{T-1}^P - B^E_{T-1}F_{T-1}^{E*} - K_T^PH_T^PA_{T-1}(I-\Pi_{T-1}^P)$,
where $\Pi_{T-1}^P$ is defined as $\Pi_{T-1}^P=\big(\widehat{\Sigma}_{T-1}^P -  \widetilde{\Sigma}_{T-1}^{(P,E)}\big)\big(\widehat{\Sigma}_{T-1}^{(P,E)}\big)^{-1}$. Hence
\begin{eqnarray}
V^P_{T-1}(\widehat{x}_{T-1}^P;F^{E*}) &=& \min_{u^P_{T-1}}\left\{(u^P_{T-1})^{\top}R_{T-1}^P u^P_{T-1} + (\widehat{x}_{T-1}^P)^\top Q_T^P \widehat{x}_{T-1}^P + \Tr\left(Q_{T-1}^P\widehat{\Sigma}_{T-1}^P \right) \right.\nonumber\\
&& +\mathbb{E}\left[ V_{T}^P((A_{T-1}+B^E_{T-1}F_{T-1}^{E*})\widehat{x}_{T-1}^P + B^P_{T-1}u^P_{T-1}+L_{T-1}^1e_{T-1}^E + L_{T-1}^2 e_{T-1}^P\right.\nonumber\\
&& \left.\left.\left.+ K_T^Pw_T^P+K_T^P H_T^P\Gamma_{T-1}w_{T-1};F^{E*})\right|\mathcal{H}^P_{T-1}\right]\right\},
\label{eqn:P_val_inte}
\end{eqnarray}
Since $V_T^P(\widehat{x}_T^P)=(\widehat{x}_T^P)^\top Q_T^P \widehat{x}_T^P+\Tr(Q_T^P \widehat{\Sigma}_T^P)$, we have
\begin{eqnarray}
&&V^P_{T-1}(\widehat{x}_{T-1}^P;F^{E*})= \min_{u^P_{T-1}}\left\{(u^P_{T-1})^{\top}R_{T-1}^P u^P_{T-1} + (\widehat{x}_{T-1}^P)^\top Q_{T-1}^P \widehat{x}_{T-1}^P + \Tr\big(Q_{T-1}^P\widehat{\Sigma}_{T-1}^P \big)\right.\nonumber\\
&& \qquad+\Tr\big(Q_T^P \widehat{\Sigma}_T^P\big)+\mathbb{E}\left[\big((A_{T-1}+B^E_{T-1}F_{T-1}^{E*})\widehat{x}_{T-1}^P + B^P_{T-1}u^P_{T-1} + L_{T-1}^1e_{T-1}^E + L_{T-1}^2 e_{T-1}^P\right.\nonumber\\
&&\qquad+ K_T^Pw_T^P+K_T^P H_T^P\Gamma_{T-1}w_{T-1}\big)^\top Q_T^P \big((A_{T-1}+B^E_{T-1}F_{T-1}^{E*})\widehat{x}_{T-1}^P + B^P_{T-1}u^P_{T-1}\nonumber\\
&&\left.\left.\left. \qquad+ L_{T-1}^1e_{T-1}^E + L_{T-1}^2 e_{T-1}^P+ K_T^Pw_T^P+K_T^P H_T^P\Gamma_{T-1}w_{T-1}\big)\right|\mathcal{H}^P_{T-1}\right]\right\}.\label{eqn:ne_inte1}
\end{eqnarray}
Expanding terms in the expectation, \eqref{eqn:ne_inte1} becomes
\begin{eqnarray}
&&V^P_{T-1}(\widehat{x}_{T-1}^P;F^{E*})\nonumber\\
&=&\min_{u^P_{T-1}}\Big\{(u^P_{T-1})^{\top}(R_{T-1}^P+(B^P_{T-1})^\top Q_T^P B^P_{T-1}) u^P_{T-1}+ 2(\widehat{x}_{T-1}^P)^\top \big(A_{T-1}+B^E_{T-1}F_{T-1}^{E*}\big)^\top Q_T^P B^P_{T-1}u^P_{T-1}\Big\}\nonumber\\
&&+(\widehat{x}_{T-1}^P)^\top \Big(Q_{T-1}^P + \big(A_{T-1}+B^E_{T-1}F_{T-1}^{E*}\big)^\top Q_T^P\big(A_{T-1}+B^E_{T-1}F_{T-1}^{E*}\big)\Big)\widehat{x}_{T-1}^P+\Tr\big(Q_T^P \widehat{\Sigma}_T^P\big) \nonumber\\
&& + \Tr\big(Q_{T-1}^P\widehat{\Sigma}_{T-1}^P \big)+ \Tr((L_{T-1}^1)^\top Q_T^P L_{T-1}^1 \widehat{\Sigma}_{T-1}^E) + \Tr((L_{T-1}^2)^\top Q_T^P L_{T-1}^2 \widehat{\Sigma}_{T-1}^P)\nonumber\\
&& + 2\Tr((L_{T-1}^1)^\top Q_T^P L_{T-1}^2\widetilde{\Sigma}_{T-1}^{(P,E)})+\Tr\big(\Gamma_{T-1}^\top (H_T^P)^\top (K_T^P)^\top Q_T^P K_T^P H_T^P\Gamma_{T-1}W\big)\nonumber\\
&&+\Tr\big((K_T^P)^\top Q_T^P K_T^P G^P\big). \label{eqn:ne_inte2}
\end{eqnarray}
{We note that all the constant terms are independent of $u_{T-1}^P$, since $\widehat{\Sigma}_T^P$, $\widetilde{\Sigma}_{T-1}^{(P,E)}$, and $\widehat{\Sigma}_{T-1}^P$ are independent of the policy $F_{T-1}^P$. Thus these constant terms will not be involved in the minimization problem. }
Applying the first order condition to the minimization part in \eqref{eqn:ne_inte2} leads to
\[
u^{P*}_{T-1}= -(R_{T-1}^P+(B^P_{T-1})^\top Q_{T}^PB^P_{T-1})^{-1}(B^P_{T-1})^\top Q_{T}^P(A_{T-1}+B^E_{T-1}F_{T-1}^{E*})\widehat{x}_{T-1}^P = F_{T-1}^{P*}\widehat{x}_{T-1}^P.
\]
Similarly, we can derive the optimal policy of the player $E$ when fixing player P's strategy $F_{T-1}^{P*}$:
\[
 u^{E*}_{T-1} =  -(R_{T-1}^E+(B^E_{T-1})^\top Q_{T}^EB^E_{T-1})^{-1}(B^E_{T-1})^\top Q_{T}^E(A_{T-1}+B^P_{T-1}F_{T-1}^{P*})\widehat{x}_{T-1}^E=F_{T-1}^{E*}\widehat{x}_{T-1}^E.
\]
Substituting $u^{P*}_{T-1}=F_{T-1}^{P*}\widehat{x}_{T-1}^P$ into \eqref{eqn:ne_inte2} we obtain the optimal value function given as
\begin{eqnarray}
V^P_{T-1}(\widehat{x}_{T-1}^P;F^{E*})
&=&(\widehat{x}_{T-1}^P)^\top \Big(Q_{T-1}^P +(F^{P*}_{T-1})^\top R_{T-1}^PF^{P*}_{T-1}+\big(A_{T-1}+B^P_{T-1}F_{T-1}^{P*}+B^E_{T-1}F_{T-1}^{E*}\big)^\top\cdot\nonumber\\
&& Q_T^P\big(A_{T-1}+B^P_{T-1}F_{T-1}^{P*}+B^E_{T-1}F_{T-1}^{E*}\big)\Big)\widehat{x}_{T-1}^P+ \Tr\big(Q_{T-1}^P\widehat{\Sigma}_{T-1}^P \big) \nonumber\\
&& +\Tr\big(Q_T^P \widehat{\Sigma}_T^P\big) + \Tr((L_{T-1}^1)^\top Q_T^P L_{T-1}^1 \widehat{\Sigma}_{T-1}^E) + \Tr((L_{T-1}^2)^\top Q_T^P L_{T-1}^2 \widehat{\Sigma}_{T-1}^P)\nonumber\\
&& + 2\Tr((L_{T-1}^1)^\top Q_T^P L_{T-1}^2\widetilde{\Sigma}_{T-1}^{(P,E)})+\Tr\big(\Gamma_{T-1}^\top (H_T^P)^\top (K_T^P)^\top Q_T^P K_T^P H_T^P\Gamma_{T-1}W\big)\nonumber\\
&&+\Tr\big((K_T^P)^\top Q_T^P K_T^P G^P\big)\label{eqn:ne_inte7}\\
    &=& (\widehat{x}_{T-1}^P)^\top U^{P*}_{T-1} (\widehat{x}_{T-1}^P)^\top + c_{T-1}^P.\label{eqn:ne_inte4}
\end{eqnarray}
where \eqref{eqn:ne_inte4} holds by a similar calculation to that proving \eqref{eqn:LHS_tower} is equivalent to \eqref{eqn:RHS_tower} in Section \ref{sec:tower_property}.
Similarly we can also show that \eqref{value_qua_P} holds for player $E$ at time $T-1$.

Now assume that \eqref{opt_policy_P}-\eqref{value_qua_P} holds at all $s\geq t+1$. Then we have
\begin{equation}\label{eqn:ne_inte6}
    V^P_{t+1}(\widehat{x}_{t+1}^P;F^{E*}) = (\widehat{x}_{t+1}^P)^{\top}U_{t+1}^{P*}\widehat{x}_{t+1}^P +c^{P*}_{t+1}.
\end{equation}

At time $t$, recall that $\Pi_{t}^P$ is defined as  $\Pi_{t}^P=\big(\widehat{\Sigma}_{t}^P -  \widetilde{\Sigma}_{t}^{(P,E)}\big)\big(\widehat{\Sigma}_{t}^{(P,E)}\big)^{-1}$. We further define $L_t^1$ and $L_t^2$ as $L_t^1 = A_{t}\Pi_{t}^P + B_{t}^EF_{t}^{E*} - K_{t+1}^PH_{t+1}^PA_{t}\Pi_{t}^P$ and
$L_t^2 = - A_{t}\Pi_t^P - B_{t}^EF_{t}^{E*} - K_{t+1}^PH_{t+1}^PA_{t}(I-\Pi_{t}^P)$. Then similarly to \eqref{eqn:x_hat_L1L2} we have
\begin{equation}
  \widehat{x}_{t+1}^P
     =(A_{t}+B_{t}^EF_{t}^{E*})\widehat{x}_{t}^P + B^P_tu_t^P + L_t^1e_{t}^E + L_t^2 e_{t}^P+ K_{t+1}^Pw_{t+1}^P+K_{t+1}^P H_{t+1}^P\Gamma_{t}w_{t},\label{eqn:ne_inte5}
\end{equation}
We apply the DPP again at time $t$, then by \eqref{eqn:ne_inte6} and \eqref{eqn:ne_inte5} we have
\begin{eqnarray}
&&V^P_{t}(\widehat{x}_t^P;F^{E*})\nonumber\\
&=& \min_{u^P_t}\left\{(u^P_t)^{\top}R_t^P u^P_t + (\widehat{x}_t^P)^\top Q_t^P \widehat{x}_t^P + \Tr\left(Q_t^P\widehat{\Sigma}_t^P \right)\right.\nonumber\\
&&+\mathbb{E}\left[ \big((A_{t}+B_{t}^EF_{t}^{E*})\widehat{x}_{t}^P +B^P_tu_t^P+ L_t^1e_{t}^E + L_t^2 e_{t}^P+ K_{t+1}^Pw_{t+1}^P+K_{t+1}^P H_{t+1}^P\Gamma_{t}w_{t}\big)^\top\cdot U_{t+1}^{P*}\cdot\right.\nonumber\\
&&  \left.\left.\left.\big((A_{t}+B_{t}^EF_{t}^{E*})\widehat{x}_{t}^P +B^P_tu_t^P+ L_t^1e_{t}^E + L_t^2 e_{t}^P+ K_{t+1}^Pw_{t+1}^P+K_{t+1}^P H_{t+1}^P\Gamma_{t}w_{t}\big)+c_{t+1}^{P*}\right|\mathcal{H}^P_{t}\right]\right\}.\nonumber
\end{eqnarray}


Expanding the terms in the expectation we obtain
 \begin{eqnarray}
&&V^P_{t}(\widehat{x}_t^P;F^{E*})\nonumber\\
&=& \min_{u^P_t}\Big\{(u^P_t)^{\top}\big(R_t^P+(B^P_t)^\top U_{t+1}^{P*} B^P_t \big) u^P_t +2(\widehat{x}_t^P)^\top( A_t+B_t^EF_t^E)^\top U_{t+1}^P B_t^P u^P_t\Big\} \nonumber\\
&& +c_{t+1}^P+ (\widehat{x}_t^P)^\top \big(Q_t^P +  (A_t+B^E_t F_t^{E*})^\top U_{t+1}^{P*} (A_t+B^E_tF_t^{E*})\big)\widehat{x}_t^P + \Tr\left(Q_t^P\widehat{\Sigma}_t^P \right) \nonumber\\
&& + \Tr((L_t^1)^\top U_{t+1}^{P*} L_t^1 \widehat{\Sigma}_{t}^E) + \Tr((L_t^2)^\top U_{t+1}^{P*} L_t^2 \widehat{\Sigma}_{t}^P) + 2\Tr((L_t^1)^\top U_{t+1}^{P*} L_t^2\widetilde{\Sigma}_{t}^{(P,E)})\nonumber\\
&& + \Tr\big(\Gamma_t^\top (H_{t+1}^P)^\top (K_{t+1}^P)^\top U_{t+1}^{P*}K_{t+1}^PH_{t+1}^P\Gamma_t W\big) + \Tr\big((K_{t+1}^P)^\top U_{t+1}^{P*} K_{t+1}^P G^P\big)\label{eqn:val_func_inte}
\end{eqnarray}
We note that all the constant terms including the accumulated sum $c_{t+1}^P$ are independent of $u_t^P$,  since $\widehat{\Sigma}_s^P$, $\widehat{\Sigma}_{s}^E$, and $\widetilde{\Sigma}_{s}^{(P,E)}$ ($s=t,\ldots,T$) are independent of the sequence $\{F_{s}^P\}_{s=0}^t$. 

We can apply the first-order condition to obtain the following optimal response:
\begin{eqnarray}
u_t^{P*} =  -(R_t^P+(B_t^P)^\top U_{t+1}^{P*}B_t^P)^{-1}(B^P_t)^\top U_{t+1}^{P*}(A_t+B^E_tF_t^{E*})\widehat{x}_t^P=F_t^{P*}\widehat{x}_t^P.
\end{eqnarray}

Similarly, player E minimizes his value function to find his optimal response to player P's strategy $\widehat{F}_t^{P*}\widehat{x}_t^P$. We can show that the optimal strategy $u^{E*}_t$ of player E is given by 
\begin{equation}
    u^{E*}_t =  -(R_t^E+(B^E_t)^\top U_{t+1}^{E*}B^E_t)^{-1}(B^E_t)^\top U_{t+1}^{E*}(A_t+B^P_tF_t^{P*})\widehat{x}_t^E=F_t^{E*}\widehat{x}_t^E.
\end{equation}

Plugging $u_t^{P*}=F_t^{P*}\widehat{x}_t^P$ into \eqref{eqn:val_func_inte} and after manipulations similar to those in the proof that \eqref{eqn:LHS_tower} is equivalent to \eqref{eqn:RHS_tower} in Section \ref{sec:tower_property}, we can rewrite the value function as 
\[
V^P_{t}(\widehat{x}_t^P;F^{E*}) = (\widehat{x}_t^P)^{\top}U_t^{P*}\widehat{x}_t^P +c^{P*}_{t}, 
\]
with $U_{t}^{P*}$ and $c_t^{P*}$ given in \eqref{eqn:Riccati_Pi_tP} and \eqref{eqn:c_tP}.
Similarly, we can show that \eqref{eqn:c_tP} also holds for player $E$.
Therefore by backward induction, the statements hold for all $t=0,1,\ldots, T$.
\end{proof}

\section{A Mixed Partially and Fully Observable Setting}\label{sec:more_general}\label{sec:more_general_setup}
In this section, we consider a more general setting for games with two players, $P$ and $E$. 
Now part of the state process is fully observable and part of the state process is partially observable.
The joint dynamics $x_t\in \mathbb{R}^n$ takes a linear form $(0\leq t \leq T-1)$:
\begin{eqnarray}\label{eq:linear_dynamics_gnr}
 x_{t+1} :=\begin{pmatrix}
 x^{(1)}_{t+1}\\
  x^{(2)}_{t+1}
 \end{pmatrix}
 = 
A_t\begin{pmatrix}
 x^{(1)}_{t}\\
  x^{(2)}_{t}
 \end{pmatrix} +B^P_t u^P_t + B^E_t u^E_t +\Gamma_t w_t,
\end{eqnarray}
with initial value $x_0 =  (x^{(1)}_{0},
  x^{(2)}_{0})^\top$, and the controls of $P$ and $E$ are $u^P_t\in \mathbb{R}^m$ and $u^E_t\in\mathbb{R}^k$, respectively. Here, for each $t$, the noise $w_t\in \mathbb{R}^d$ is an i.i.d. sample from $\mathcal{N}(0,W)$ with $W\in \mathbb{R}^{d\times d}$ and we have the model parameters $A_t\in \mathbb{R}^{n\times n},$ $B^P_t\in \mathbb{R}^{n\times m}$, $B^E_t\in \mathbb{R}^{n\times k}$, and $\Gamma_t\in \mathbb{R}^{n\times d}$. We assume that $x_t^{(1)}\in \mathbb{R}^{n_1}$ is the partially observable part and $x_t^{(2)}\in \mathbb{R}^{n_2}$ is the fully observable part  with $n=n_1+n_2$.

\vspace{-0.2cm}
\paragraph{Information Structure.} At the time $t=0$, player $P$ observes $x_0^{(2)}$ and believes that $x_0^{(1)}$ is drawn from a Gaussian distribution $ x^{(1)}_0\sim \mathcal{N}(\widehat{x}_0^{P,(1)},W^P_0)$, and thereafter player $P$ observes part of the state $x_t^{(2)}\in \mathbb{R}^{n_2}$ and the noisy state signal $z_t^P\in \mathbb{R}^p$:
    \begin{eqnarray}\label{eq:observation_P_gnr}
    z_{t+1}^P = H_{t+1}^P\,x^{(1)}_{t+1} + \,w^P_{t+1},\quad w^P_{t+1}\sim \mathcal{N}(0,G^P),\quad t=0,1,\cdots,T-1,
    \end{eqnarray}
    with $\{w^P_{t}\}_{t=0}^{T-1}$ a sequence of i.i.d. random variables. Here $G^P\in\mathbb{R}^{p\times p}$  and $H_{t+1}^P\in \mathbb{R}^{p\times n_1}$.
    Similarly, player $E$ observes $x_0^{(2)}$ and believes that $x_0^{(1)}$ is drawn from a Gaussian distribution $x^{(1)}_0\sim \mathcal{N}(\widehat{x}_0^{E,(1)},W^E_0)$.
  Then player $E$ observes part of the state $x_t^{(2)}\in \mathbb{R}^{n_2}$ and the  noisy state signal $z_t^E\in \mathbb{R}^q$:
    \begin{eqnarray}\label{eq:observation_E_gnr}
    z_{t+1}^E = H_{t+1}^E\,x^{(1)}_{t+1} +\,w^E_{t+1},\quad w^E_{t+1}\sim \mathcal{N}(0,G^E),\quad t=0,1,\cdots,T-1.
    \end{eqnarray}
    with $\{w^E_{t}\}_{t=0}^{T-1}$ a sequence of i.i.d. random variables. For simplicity we assume that $\{w^E_{t}\}_{t=0}^{T-1}$ are independent from  $\{w^P_{t}\}_{t=0}^{T-1}$. In addition we have $G^E\in \mathbb{R}^{q\times q}$  and $H_{t+1}^E\in \mathbb{R}^{q\times n_1}$.

Both players make their decisions based on the public and private information available to them.  We write $\mathcal{Z}_t^P =\{z_s^P\}_{s=1}^t$ and $\mathcal{Z}_t^E =\{z_s^E\}_{s=1}^t$ for the private signals players P and E receive up to time $t$ $(1\leq t \leq T)$, respectively. Let $\mathcal{U}^P_t=\{u^P_s\}_{s=1}^t$ and $\mathcal{U}^E_t=\{u^E_s\}_{s=1}^t$ denote the control history from the buyer and seller up to time $t$, respectively. Also let $\mathcal{X}_t:=\{x_{s}^{(2)}\}_{s=0}^t$ be the public information that is available to both players.

We assume $\mathcal{H}^P_t$ is the information (or history) available to player P and $\mathcal{H}^E_t$ is the information available to player E  for them to make decisions at time $t$, where $\mathcal{H}^P_t$ and $\mathcal{H}^E_t$ follow:
\begin{eqnarray}
    \mathcal{H}^P_t = \{\widehat{x}_0^P,W_0^P,W_0^E\}\cup\mathcal{Z}_t^P \cup \mathcal{X}_t \cup \mathcal{U}^P_{t-1} \cup \mathcal{U}^E_{t-1} ,\,\,
    \mathcal{H}^E_t =\{\widehat{x}_0^E,W_0^P,W_0^E\}\cup \mathcal{Z}_t^E \cup \mathcal{X}_t \cup \mathcal{U}^P_{t-1} \cup \mathcal{U}^E_{t-1}.\label{seller_information_gnr}
\label{buyer_information_gnr}
\end{eqnarray}
    Note that the covariance matrices  $\{W_0^P,W_0^E\}$ are known to both players.
\vspace{-0.2cm}
    \paragraph{Cost Function.}
Each player $i$ ($i=P,E$) strives to minimize their own cost function:
\begin{eqnarray}
    \min_{\{u^i_t\}_{t=0}^{T-1}} J^i(\widehat{x}_0^{i,(1)},x_0^{(2)}) &:=& \min_{\{u^i_t\}_{t=0}^{T-1}}\mathbb{E}\left.\left[x_T^{\top}{ Q^i_T} x_T +\sum_{t=0}^{T-1}\left({ x_{t}^{\top}Q^i_{t}x_{t}} + (u^i_t)^{\top}R_t^iu^i_t\right)\,\right\vert\, \mathcal{H}^i_{0}\right],\label{eq:cost_P_gnr}
\end{eqnarray}
with cost parameters $Q_t^P,Q_t^E\in\mathbb{R}^{n\times n}$, $R_t^P\in\mathbb{R}^{m\times m}$ and $R_t^E\in\mathbb{R}^{k\times k}$.

Rewrite the earlier model as $A_t = \begin{pmatrix}
    A_t^{(1,1)}\, A_t^{(1,2)}\\
     A_t^{(2,1)}\, A_t^{(2,2)}
\end{pmatrix}$
with $ A_t^{(1,1)} \in \mathbb{R}^{n_1\times n_1}$,  $ A_t^{(1,2)} \in \mathbb{R}^{n_1\times n_2}$, $ A_t^{(2,1)} \in \mathbb{R}^{n_2\times n_1}$ and  $ A_t^{(2,2)} \in \mathbb{R}^{n_2\times n_2}$. Similarly, rewrite
 $B^P_t =
    (B_t^{P,(1)},
    B_t^{P,(2)})^\top$
with $ B_t^{P,(1)} \in \mathbb{R}^{n_1\times m}$ and  $ B_t^{P,(2)} \in \mathbb{R}^{n_2\times m}$, and $B^E_t = (
    B_t^{E,(1)},
    B_t^{E,(2)})^\top$
with $ B_t^{E,(1)} \in \mathbb{R}^{n_1\times k}$, $ B_t^{E,(2)} \in \mathbb{R}^{n_2\times k}$, and $\Gamma_t=(
    \Gamma_t^{(1)},
    \Gamma_t^{(2)})^\top$ with $\Gamma_t^{(1)}\in\mathbb{R}^{n_1\times d}$ and $\Gamma_t^{(2)}\in\mathbb{R}^{n_2\times d}$. For the cost parameters, $Q_t^i=\begin{pmatrix}
    Q_t^{i,(1,1)} & Q_t^{i,(1,2)}\\
    (Q_t^{i,(1,2)})^\top
 & Q_t^{i,(2,2)}
 \end{pmatrix}$ with $Q_t^{i,(1,1)}\in\mathbb{R}^{n_1\times n_1}$ , $Q_t^{i,(1,2)}\in\mathbb{R}^{n_1\times n_2}$, and $Q_t^{i,(2,2)}\in\mathbb{R}^{n_2\times n_2}$ for $i=P,E$.


For the mixed case we make the following assumptions on the parameters, initial state, and noise.
\begin{Assumption}[{[Mixed Setting]} Parameters, Initial State, and Noise]\label{ass:LQGG_model_mix}
For $i=P,E$,
\begin{enumerate}
    \item $\{w_t\}_{t=0}^{T-1}$ and $\{w_t^i\}_{t=1}^{T-1}$ are zero-mean, i.i.d. Gaussian random variables that are independent from $x_0$ and each other and such that $\mathbb{E}[w_tw_t^\top]=W$  and $\mathbb{E}[w_t^i(w_t^i)^\top]=G^i$ are positive definite;
    \item Both matrices $H_{t+1}^P\in \mathbb{R}^{p\times n_1}$ and $H_{t+1}^E\in \mathbb{R}^{q\times n_1}$ have rank $n_1$ for $t=0,\dots,T-1$.
    \item The matrices $\Gamma_t^{(1)}W(\Gamma_t^{(1)})^\top$ are non-singular for $t=1,\dots,T$;
    \item The cost matrices $Q_t^i$, for $t=0,1,\ldots,T$ are positive semi-definite, and $R_t^i$ for $t=0,1,\ldots,T-1$ are positive definite.
\end{enumerate}
\end{Assumption}

We now give the main results and omit the proofs as they follow naturally by applying the ideas in Sections~\ref{sec:set-up} and~\ref{sec:equilibrium} to the partially observable part of the state process.
\begin{Theorem}[Sufficient Statistics in Two-player Games]\label{thm:sufficient_statistics_gnr} Assume that both players are applying linear strategies in that $u^P_t = F_t^{P,(1)} \,\mathbb{E}[x^{(1)}_t|\mathcal{H}_{t}^P] + F_t^{P,(2)}\,x_t^{(2)}$ and $u^E_t = F_t^{E,(1)} \,\mathbb{E}[x^{(1)}_t|\mathcal{H}_{t}^E] +F_t^{E,(2)}\,x_t^{(2)}$ for some matrices $F_t^{P,(1)}\in \mathbb{R}^{m\times n_1}$ {of rank $\min(m,n_1)$}, $F_t^{P,(2)}\in \mathbb{R}^{m\times n_2}$,  $F_t^{E,(1)}\in \mathbb{R}^{k\times n_1}$ {of rank $\min(k,n_1)$}, and $F_t^{E,(2)}\in \mathbb{R}^{k\times n_2}$.
The sufficient statistic of player $i$ for $i=P,E$ at decision time $t = 0$ is $x_0 \sim N(\widehat{x}_0^i, W_0^i)$. For time $1 \leq t \leq T-1$, the distribution of  $x^{(1)}_t$ as calculated by player $i$, conditioning on the private information available to him at time $t$, is given by 
\begin{eqnarray}\label{eq:belief_process_gnr}
x^{(1)}_t \sim \mathcal{N}(\widehat{x}_t^{i,(1)},\widehat{\Sigma}_t^i),
\end{eqnarray}
where, for $j \neq i$,
\begin{subequations}
\begin{align}
 J_{t-1}^i  &= \Big( \widehat{\Sigma}_{t-1}^i -  \widetilde{\Sigma}_{t-1}^{(i,j)}\Big) \widehat{\Sigma}_{t-1}^{(i,j)}({Y}_{t-1}^{j,(1)})^\top \Big({Y}_{t-1}^{j,(1)} \widehat{\Sigma}_{t-1}^{(i,j)}\widehat{\Sigma}_{t-1}^{(i,j)} ({Y}_{t-1}^{j,(1)})^\top\Big)^{-1},\label{eq:J_Kalman_gnr}\\
(\widehat{x}^{i,(1)}_{t-1})^{+} &= \widehat{x}^{i,(1)}_{t-1}+ J_{t-1}^{i}\Big({y}^j_{t-1} - {Y}_{t-1}^{j,(1)} \widehat{x}_{t-1}^{i,(1)}\Big) ,\label{eq:x_hat_eq4_gnr}\\
 (\widehat{\Sigma}_{t-1}^i)^{+} 
    &= \widehat{\Sigma}^i_{t-1} - \Big(\widehat{\Sigma}^i_{t-1}-\widetilde{\Sigma}_{t-1}^{(i,j)}\Big)(\widehat{\Sigma}_{t-1}^{(i,j)})^{-1} \Big(\widehat{\Sigma}_{t-1}^i-\widetilde{\Sigma}_{t-1}^{(i,j)}\Big)^\top,\label{eq:sig_plus_hat_gnr}\\
\big(\widehat{x}^{i,(1)}_t\big)^{-} &= A_{t-1}^{(1,1)} (\widehat{x}^{i,(1)}_{t-1})^{+} + A_{t-1}^{(1,2)}x_{t-1}^{(2)}+ B^{P,(1)}_{t-1} u^P_{t-1} + B^{E,(1)}_{t-1}u^E_{t-1},\label{eq:x_hat_eq1_gnr}\\
\big(\widehat{\Sigma}^{i}_t\big)^{-} &= A^{(1,1)}_{t-1}(\widehat{\Sigma}^{i}_{t-1})^+(A^{(1,1)}_{t-1})^{\top} +\Gamma_{t-1}^{(1)}W (\Gamma_{t-1}^{(1)})^{\top},\label{eq:Sigma_hat_minus_gnr} \\
K_t^i &= \big(\widehat{\Sigma}^{i}_t\big)^{-}(H_t^i)^{\top}\left[ H_t^i\big(\widehat{\Sigma}^{i}_t\big)^{-} (H^i_t)^{\top}+G^i\right]^{-1},\label{eq:K_def_gnr}\\
\widehat{x}_t^{i,(1)} &= \big(\widehat{x}_t^{i,(1)}\big)^{-} + K_t^{i} \left[ z_t^i -H_t^i\big(\widehat{x}_t^{i,(1)}\big)^{-}\right],\label{eq:x_hat_eq3_gnr}\\
\widehat{\Sigma}^{i}_t &= \big(I-K_t^iH^i_t\big)\big(\widehat{\Sigma}^{i}_t\big)^{-},
\label{eq:x_hat_eq2_gnr}\\
\widetilde{\Sigma}_{t}^{(i,j)} &= \left(I-K_{t}^iH_{t}^i\right)\left( A^{(1,1)}_{t-1} \Delta_{t-1}^{(i,j)} (A^{(1,1)}_{t-1})^{\top} + \Gamma_{t-1}^{(1)} W (\Gamma_{t-1}^{(1)})^{\top}\right) \left( I-K_{t}^jH^j_{t}\right)^\top,\\
\Delta_{t-1}^{(i,j)} &=
{(\widehat{\Sigma}^i_{t-1} - \widetilde{\Sigma}_{t-1}^{(i,j)})(\widehat{\Sigma}^{(i,j)}_{t-1})^{-1} (\widehat{\Sigma}^j_{t-1} - \widetilde{\Sigma}_{t-1}^{(j,i)}) ^\top+\widetilde{\Sigma}_{t-1}^{(i,j)} },\\
 \widehat{\Sigma}_t^{(i,j)} &= \widehat{\Sigma}_t^{i} + \widehat{\Sigma}_t^j - \widetilde{\Sigma}_t^{(i,j)} - \left(\widetilde{\Sigma}_t^{(i,j)}\right)^{\top}\label{eq:Sigma_hat_ij_gnr},
\end{align}
\end{subequations}
where $\widehat{\Sigma}_{t-1}^{(i,j)}$ is positive definite. 
The values of $Y_{t}^{P,(1)} \in \mathbb{R}^{m\times n_1}$, $Y_{t}^{E,(1)}\in\mathbb{R}^{k\times n_1}$ and $y_t^P$, $y_t^E$ depend on the ranks of $F_t^{P,(1)}$ and $F_t^{E,(1)}$ as follows:
\begin{itemize}
    \item[(i)] The pair 
    \[ (Y_t^{P,(1)},y_t^P)=\left\{ \begin{array}{ll} \left(F_t^{P,(1)},\,u_t^P-F_{t-1}^{P,(2)} x_{t-1}^{(2)}\right) &  \mbox{if $F_t^{P,(1)}$ has rank $m<n_1$,} \\ (I_n, \widehat{x}_t^{P,(1)}) & 
\mbox{if $F_t^P$ has rank $n_1\leq m$.} \end{array} \right. \]   
\item[(ii)]  The pair
    \[ (Y_t^{E,(1)},y_t^E)=\left\{ \begin{array}{ll} \left(F_t^{E,(1)},\,u_t^E-F_{t-1}^{E,(2)} x_{t-1}^{(2)}\right) & \mbox{if $F_t^{E,(1)}$ has rank $k<n_1$,} \\ (I_n, \widehat{x}_t^{E,(1)}) & 
\mbox{if $F_t^{E,(1)}$ has rank $n_1\leq k$.} \end{array} \right. \]   
\end{itemize}
Finally,
the initial conditions are $\widehat{\Sigma}^{i}_0 = W^{i}_0$, $\widetilde{\Sigma}_{0}^{(i,j)} = 0$, and $\widehat{\Sigma}^{(i,j)}_0 = \widehat{\Sigma}_0^i + \widehat{\Sigma}_0^j$.
\end{Theorem}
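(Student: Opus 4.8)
The plan is to mirror the proof of Theorem~\ref{thm:sufficient_statistics}, restricting all filtering to the partially observable component $x_t^{(1)}$ while treating $x_t^{(2)}$ as a known deterministic input at every step. Since $\mathcal{X}_t=\{x_s^{(2)}\}_{s=0}^t$ is contained in both $\mathcal{H}_t^P$ and $\mathcal{H}_t^E$, both players observe $x_t^{(2)}$ exactly, so there is no belief to form about it and no ``belief about belief'' chain attached to it; the only unknown each player estimates is $x_t^{(1)}$, with error $e_t^{i,(1)}:=\widehat{x}_t^{i,(1)}-x_t^{(1)}$. Setting $\widehat{\Sigma}_t^i=\mathbb{E}[e_t^{i,(1)}(e_t^{i,(1)})^\top]$, $\widetilde{\Sigma}_t^{(i,j)}=\mathbb{E}[e_t^{i,(1)}(e_t^{j,(1)})^\top]$ and $\widehat{\Sigma}_t^{(i,j)}$ as in \eqref{eq:Sigma_hat_ij_gnr}, the whole argument of Theorem~\ref{thm:sufficient_statistics} goes through once $A_{t-1}$ is replaced by its block $A_{t-1}^{(1,1)}$ and $\Gamma_{t-1}$ by $\Gamma_{t-1}^{(1)}$, because in the $x^{(1)}$-dynamics \eqref{eq:x_hat_eq1_gnr} the term $A_{t-1}^{(1,2)}x_{t-1}^{(2)}$ together with the control terms $B_{t-1}^{P,(1)}u^P_{t-1}+B_{t-1}^{E,(1)}u^E_{t-1}$ are all known to both players and therefore cancel when forming the pre-measurement error $(\widehat{x}_t^{i,(1)})^- - x_t^{(1)} = A_{t-1}^{(1,1)}\big((\widehat{x}_{t-1}^{i,(1)})^+ - x_{t-1}^{(1)}\big) - \Gamma_{t-1}^{(1)}w_{t-1}$.

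The only genuinely new ingredient is the decoding of the opponent's action. From player $i$'s viewpoint the new data at time $t$ are $z_t^i$, $x_t^{(2)}$ and $u_{t-1}^j$; the first two are handled as in the single-block case. For the third, since $u_{t-1}^j = F_{t-1}^{j,(1)}\mathbb{E}[x_{t-1}^{(1)}\mid\mathcal{H}_{t-1}^j]+F_{t-1}^{j,(2)}x_{t-1}^{(2)}$ and $x_{t-1}^{(2)}$ is public, player $i$ can form the reduced signal $y_{t-1}^j:=u_{t-1}^j-F_{t-1}^{j,(2)}x_{t-1}^{(2)}=F_{t-1}^{j,(1)}\widehat{x}_{t-1}^{j,(1)}$ when $F_{t-1}^{j,(1)}$ is rank-deficient, or directly recover $\widehat{x}_{t-1}^{j,(1)}$ when $F_{t-1}^{j,(1)}$ has rank $n_1$; this is exactly the case split $(Y_{t-1}^{j,(1)},y_{t-1}^j)$ stated in the theorem. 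One then performs the incremental update \eqref{eq:x_hat_eq4_gnr}, expands $y_{t-1}^j-Y_{t-1}^{j,(1)}\widehat{x}_{t-1}^{i,(1)}$ in terms of $e_{t-1}^{i,(1)}$ and $e_{t-1}^{j,(1)}$ as in the derivation of \eqref{eq:J}, and chooses $J_{t-1}^i$ to minimize $\mathrm{Tr}\big(\mathrm{cov}(x_{t-1}^{(1)}-(\widehat{x}_{t-1}^{i,(1)})^+)\big)$. The first-order condition reproduces $\widehat{\Sigma}_{t-1}^i-\widetilde{\Sigma}_{t-1}^{(i,j)}=J_{t-1}^i Y_{t-1}^{j,(1)}\widehat{\Sigma}_{t-1}^{(i,j)}$, hence \eqref{eq:J_Kalman_gnr} and \eqref{eq:sig_plus_hat_gnr}; the hypothesis that $F_t^{i,(1)}$ has rank $\min(m,n_1)$ (resp. $\min(k,n_1)$), combined with $\widehat{\Sigma}_{t-1}^{(i,j)}\succ0$, is precisely what makes $Y_{t-1}^{j,(1)}\widehat{\Sigma}_{t-1}^{(i,j)}\widehat{\Sigma}_{t-1}^{(i,j)}(Y_{t-1}^{j,(1)})^\top$ invertible.

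The remaining steps are literal transcriptions of the earlier proof. The prediction step \eqref{eq:x_hat_eq1_gnr}--\eqref{eq:Sigma_hat_minus_gnr} uses the improved estimate $(\widehat{x}_{t-1}^{i,(1)})^+$, the known input $A_{t-1}^{(1,2)}x_{t-1}^{(2)}$ and the controls; the measurement update \eqref{eq:K_def_gnr}--\eqref{eq:x_hat_eq2_gnr} is the ordinary Kalman step for $z_t^i=H_t^i x_t^{(1)}+w_t^i$. Writing $e_t^{i,(1)}$ in the form of \eqref{eqn:e^P_exp} with $A_{t-1}^{(1,1)},\Gamma_{t-1}^{(1)}$ in place of $A_{t-1},\Gamma_{t-1}$, and taking $\mathbb{E}[e_t^{i,(1)}(e_t^{j,(1)})^\top]$, yields the Lyapunov recursion for $\widetilde{\Sigma}_t^{(i,j)}$; the simplification of $\Delta_{t-1}^{(i,j)}$ into its compact form via the identity $J_{t-1}^i Y_{t-1}^{j,(1)}+J_{t-1}^j Y_{t-1}^{i,(1)}=I$ is the same manipulation that takes \eqref{eq:delta0} to \eqref{eqn:deltaij_defn}. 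Positive definiteness of $\widehat{\Sigma}_{t-1}^{(i,j)}$ follows as before: $\widehat{\Sigma}_t^{(i,j)}$ decomposes as a positive semi-definite term plus $K_t^i G^i (K_t^i)^\top+K_t^j G^j (K_t^j)^\top$, and the rank-$n_1$ assumption on $H_t^i$ together with $G^i\succ0$ makes $K_t^i G^i (K_t^i)^\top$ positive definite on $\mathbb{R}^{n_1}$.

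The main obstacle I anticipate is pure bookkeeping: one must check with care that no part of the uncertainty ever leaks from the $x^{(2)}$-block into the recursion, i.e.\ that every occurrence of $x_{t-1}^{(2)}$ (in the dynamics, in the two controls, and in the opponent's action) is measurable with respect to both $\mathcal{H}_t^P$ and $\mathcal{H}_t^E$ and hence drops out of all error terms, so that the recursion genuinely closes on the $n_1$-dimensional objects $\widehat{x}_t^{i,(1)},\widehat{\Sigma}_t^i,\widetilde{\Sigma}_t^{(i,j)}$. A secondary point is to confirm that the rank conditions that are actually needed are on the blocks $F_t^{i,(1)}$ acting on the partially observed estimate, not on the full matrices $F_t^i$, and that the reduced signal $y_{t-1}^j$ carries exactly the same information about $x_{t-1}^{(1)}$, given $\mathcal{H}_{t-1}^i$, as $u_{t-1}^j$ itself.
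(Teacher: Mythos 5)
Your proposal is correct and follows exactly the route the paper intends: the paper omits the proof of Theorem \ref{thm:sufficient_statistics_gnr}, stating only that it follows by applying the argument of Theorem \ref{thm:sufficient_statistics} to the partially observable block, and your writeup is precisely that adaptation --- replacing $A_{t-1},\Gamma_{t-1}$ by $A^{(1,1)}_{t-1},\Gamma^{(1)}_{t-1}$, cancelling the publicly known $x^{(2)}$ terms from the error recursions, and decoding the opponent's action through the reduced signal $y^j_{t-1}=u^j_{t-1}-F^{j,(2)}_{t-1}x^{(2)}_{t-1}$. Your handling of the rank case split, the identity $J^i_{t-1}Y^{j,(1)}_{t-1}+J^j_{t-1}Y^{i,(1)}_{t-1}=I$ used to simplify $\Delta^{(i,j)}_{t-1}$, and the positive definiteness of $\widehat{\Sigma}^{(i,j)}_{t-1}$ all match the corresponding steps in the proof of Theorem \ref{thm:sufficient_statistics}.
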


\begin{Theorem}[Nash Equilibrium]\label{thm:NE_mixed}
Suppose Assumption \ref{ass:LQGG_model_mix} holds and there exists a unique solution  $\{F_t^{P*}\}_{t=0}^{T-1}$ and $\{F_t^{E*}\}_{t=0}^{T-1}$ to \eqref{eqn:opt_FP_t}-\eqref{eqn:opt_FE_t} with $F_t^{P*,(1)}$ of rank $\min(m,n_1)$ and $F_t^{E*,(1)}$ of rank $\min(k,n_1)$. Further assume that both players apply linear policies. Then the unique Nash equilibrium policy is 
\begin{equation}
     u_t^{i*} = F_t^{i*}y_t^i, \quad \text{with} \quad y_{t}^i=
         (\widehat{x}_t^{i,(1)},x_t^{(2)})^\top
,\quad i=P,E.
\end{equation}
The corresponding optimal value functions are quadratic $(0\leq t \leq T)$:
\begin{eqnarray}
V^P_{t}(y_t^P;F^{E*}) = (y_t^P)^{\top}U_t^{P*}y_t^P +\widetilde{c}^{P*}_{t},\quad
V^E_{t}(y_t^E; F^{P*}) = (y_t^E)^{\top}U_t^{E*}y_t^E +\widetilde{c}^{E*}_{t},\label{value_qua_E_gnr}\label{value_qua_P_gnr}
\end{eqnarray}
with matrices $U_t^{P*},U_t^{E*}\in \mathbb{R}^{n\times n}$ given in \eqref{eqn:Riccati_Pi_tP} and \eqref{eqn:Riccati_Pi_tE}. Here for $i,j=P,E$ and $j\neq i$, the  scalar $c^{i*}_{t}\in \mathbb{R}$ is given by 
\begin{eqnarray}
\widetilde{c}_t^{i*}&=& \widetilde{c}_{t+1}^{i*}+ \Tr(Q_t^{i,(1,1)}\widehat{\Sigma}_t^{i}) + \Tr\big((\overline{L}_t^{i,(1)})^\top U_{t+1}^i \overline{L}_t^{i,(1)} \widehat{\Sigma}_t^i\big)+ \Tr\big((\overline{L}_t^{i,(2)})^\top U_{t+1}^i \overline{L}_t^{i,(2)}\widehat{\Sigma}_t^j\big)\nonumber\\
    && + 2 \Tr\big((\overline{L}_t^{i,(2)})^\top U_{t+1}^i \overline{L}_t^{i,(1)} \widetilde{\Sigma}_t^{(i,j)}\big)+\Tr\left((K_{t+1}^i)^\top U_{t+1}^{i,(1,1)}K_{t+1}^i G^i \right)\nonumber\\
    &&+\Tr\left(
     \begin{bmatrix}
        \big(K_{t+1}^i H_{t+1}^i\Gamma_{t}^{(1)}\big)^\top &
        (\Gamma_t^{(2)})^\top
    \end{bmatrix}U_{t+1}^i
    \begin{bmatrix}
        K_{t+1}^i H_{t+1}^i\Gamma_{t}^{(1)}\\
        \Gamma_t^{(2)}
    \end{bmatrix}
    W
    \right),\nonumber
\end{eqnarray}
where $\overline{L}_t^{i,(1)}=(
        \widetilde{L}_t^{i,(1)},
        -(A_t^{(2,1)}+B_t^{j,(2)}F_t^{j*,(1)}))^\top$, and $\overline{L}_t^{i,(2)}=(
       \widetilde{L}_t^{i,(2)},
        B_t^{j,(2)}F_t^{j*,(1)})^\top$ with
    \begin{eqnarray*}
\widetilde{L}_t^{i,(1)} &=& - A_{t}^{(1,1)}\Pi_t^i - B_{t}^{j,(1)}F_{t}^{j*,(1)} - K_{t+1}^i H_{t+1}^i A_{t}^{(1,1)}(I-\Pi_t^i ),\\
    \widetilde{L}_t^{i,(2)} &=& A_{t}^{(1,1)}\Pi_t^i  + B_{t}^{j,(1)}F_{t}^{j*,(1)} - K_{t+1}^i H_{t+1}^i A_{t}^{(1,1)}\Pi_t^i ,
\end{eqnarray*}
where $\Pi_{t}^i:=\big(\widehat{\Sigma}_{t}^i -  \widetilde{\Sigma}_{t}^{(i,j)}\big)\big(\widehat{\Sigma}_{t}^{(i,j)}\big)^{-1}$.
The terminal conditions are $\widetilde{c}_T^{i*} = \Tr(Q_T^{i,(1,1)}\widehat{\Sigma}_T^i)$ for $i=P,E$.
\end{Theorem}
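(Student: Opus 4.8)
The plan is to replay the backward-induction argument of Theorem~\ref{general_result}, with the single sufficient statistic $\widehat{x}_t^i$ replaced by the stacked statistic $y_t^i=(\widehat{x}_t^{i,(1)},x_t^{(2)})^\top$ and the single-state filtering quantities replaced by their $x^{(1)}$-counterparts from Theorem~\ref{thm:sufficient_statistics_gnr}. First I would record that the Dynamic Programming Principle holds in the mixed setting: Theorem~\ref{thm:sufficient_statistics_gnr} supplies the correct incremental expressions for $\mathbb{E}[x_t\mid\mathcal{H}_t^i]$ and $\mathbb{E}[x_t^\top O_tx_t\mid\mathcal{H}_t^i]$ in terms of $y_t^i$, $\widehat{\Sigma}_t^i$, $\widehat{\Sigma}_t^j$ and $\widetilde{\Sigma}_t^{(i,j)}$, so the tower property for $\mathcal{H}_{t-1}^i\subset\mathcal{H}_t^i$ goes through exactly as in \eqref{tower_pro}, and hence the analogue of Theorem~\ref{thm:dpp} holds with $\widehat{x}_t^i$ replaced by $y_t^i$. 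I then fix player $E$ at the candidate policy $F^{E*}$, treat player $P$ (the argument for $E$ being symmetric), and prove by downward induction on $t$ that $V_t^P(y_t^P;F^{E*})=(y_t^P)^\top U_t^{P*}y_t^P+\widetilde{c}_t^{P*}$ and that $P$'s best response is $u_t^{P*}=F_t^{P*}y_t^P$. The base case $t=T$ is immediate: conditioning on $\mathcal{H}_T^P$ gives $x_T^{(1)}\sim\mathcal{N}(\widehat{x}_T^{P,(1)},\widehat{\Sigma}_T^P)$ with $x_T^{(2)}$ known, so $x_T=y_T^P-(e_T^P,0)^\top$ and $\mathbb{E}[x_T^\top Q_T^Px_T\mid\mathcal{H}_T^P]=(y_T^P)^\top Q_T^Py_T^P+\Tr(Q_T^{P,(1,1)}\widehat{\Sigma}_T^P)$, matching \eqref{value_qua_P_gnr} with $U_T^{P*}=Q_T^P$ and $\widetilde{c}_T^{P*}=\Tr(Q_T^{P,(1,1)}\widehat{\Sigma}_T^P)$.

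For the inductive step, assume the claim at $t+1$. Using \eqref{eq:x_hat_eq4_gnr}--\eqref{eq:x_hat_eq2_gnr}, the relation $\Pi_t^P=J_t^PF_t^{E,(1)}$ (the mixed analogue of \eqref{eq:J_Pi_relation}), the substitution $u_t^E=F_t^{E*,(1)}\widehat{x}_t^{E,(1)}+F_t^{E*,(2)}x_t^{(2)}=F_t^{E*}y_t^E$ with $\widehat{x}_t^{E,(1)}=\widehat{x}_t^{P,(1)}-e_t^P+e_t^E$, and $x_{t+1}^{(2)}=A_t^{(2,1)}x_t^{(1)}+A_t^{(2,2)}x_t^{(2)}+B_t^{P,(2)}u_t^P+B_t^{E,(2)}u_t^E+\Gamma_t^{(2)}w_t$, I would write the one-step dynamics of the stacked statistic in the affine form
\[
y_{t+1}^P=(A_t+B_t^EF_t^{E*})\,y_t^P+B_t^P\,u_t^P+\overline{L}_t^{P,(1)}e_t^P+\overline{L}_t^{P,(2)}e_t^E+\xi_t,
\]
where $\xi_t$ collects the independent Gaussian terms ($K_{t+1}^Pw_{t+1}^P$ in the first block, and $(K_{t+1}^PH_{t+1}^P\Gamma_t^{(1)},\Gamma_t^{(2)})^\top w_t$ overall), and $\overline{L}_t^{P,(1)},\overline{L}_t^{P,(2)}$ are exactly the loading matrices of the statement: their top blocks $\widetilde{L}_t^{P,(1)},\widetilde{L}_t^{P,(2)}$ are the block-structured versions of $L_2,L_1$ from \eqref{eqn:L2}--\eqref{eqn:L1} coming from the partially observable Kalman update (so $A^{(1,1)},B^{(1)}$ replace $A,B$ and the deterministic term $A_t^{(1,2)}x_t^{(2)}$ is absorbed into the $y_t^P$-coefficient), while their bottom blocks $-(A_t^{(2,1)}+B_t^{E,(2)}F_t^{E*,(1)})$ and $B_t^{E,(2)}F_t^{E*,(1)}$ are the coefficients of $e_t^P$ and $e_t^E$ in $x_{t+1}^{(2)}$ after substituting $x_t^{(1)}=\widehat{x}_t^{P,(1)}-e_t^P$ and the above expression for $u_t^E$. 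Substituting this into $\mathbb{E}[V_{t+1}^P(y_{t+1}^P;F^{E*})\mid\mathcal{H}_t^P]=\mathbb{E}[(y_{t+1}^P)^\top U_{t+1}^{P*}y_{t+1}^P\mid\mathcal{H}_t^P]+\widetilde{c}_{t+1}^{P*}$, using that conditional on $\mathcal{H}_t^P$ the triple $(e_t^P,e_t^E,\xi_t)$ is zero-mean Gaussian with $\mathbb{E}[e_t^P(e_t^P)^\top]=\widehat{\Sigma}_t^P$, $\mathbb{E}[e_t^E(e_t^E)^\top]=\widehat{\Sigma}_t^E$, $\mathbb{E}[e_t^P(e_t^E)^\top]=\widetilde{\Sigma}_t^{(P,E)}$ (so all cross terms with the $\mathcal{H}_t^P$-measurable part $(A_t+B_t^EF_t^{E*})y_t^P+B_t^Pu_t^P$ drop out), together with $\mathbb{E}[x_t^\top Q_t^Px_t\mid\mathcal{H}_t^P]=(y_t^P)^\top Q_t^Py_t^P+\Tr(Q_t^{P,(1,1)}\widehat{\Sigma}_t^P)$, turns the DPP right-hand side into a quadratic in $(u_t^P,y_t^P)$ plus the policy-independent constants $\widetilde{c}_{t+1}^{P*}$, two $\overline{L}$-trace terms, and two noise traces.

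The remaining work is algebraic and parallels the non-mixed proof. The $u_t^P$-dependent part of the objective is exactly $(u_t^P)^\top(R_t^P+(B_t^P)^\top U_{t+1}^{P*}B_t^P)u_t^P+2(y_t^P)^\top(A_t+B_t^EF_t^{E*})^\top U_{t+1}^{P*}B_t^Pu_t^P$, so the first-order condition gives $u_t^{P*}=F_t^{P*}y_t^P$ with $F_t^{P*}$ as in \eqref{eqn:opt_FP_t}; the symmetric computation for $E$ gives $u_t^{E*}=F_t^{E*}y_t^E$, and the assumed rank of $F_t^{P*,(1)},F_t^{E*,(1)}$ is precisely what makes $y_t^i$ an admissible sufficient statistic through Theorem~\ref{thm:sufficient_statistics_gnr}, so the candidate pair is a mutual best response. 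Substituting $u_t^{P*}$ back and completing the square with $A_t+B_t^PF_t^{P*}+B_t^EF_t^{E*}$ collapses the quadratic part to $(y_t^P)^\top U_t^{P*}y_t^P$ with $U_t^{P*}$ as in \eqref{eqn:Riccati_Pi_tP}, and the constant part becomes $\widetilde{c}_t^{P*}$ as stated, read off term by term from the $\overline{L}$-traces, the $\Tr(Q_t^{P,(1,1)}\widehat{\Sigma}_t^P)$ term, and the noise traces; that $\widehat{\Sigma}_s^i,\widetilde{\Sigma}_s^{(i,j)}$ do not depend on $P$'s own policy, which is needed to pull them outside the minimization, is inherited from Theorem~\ref{thm:sufficient_statistics_gnr}. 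Closing the induction and pairing the two best responses gives the Nash equilibrium, with uniqueness following from the assumed unique solvability of \eqref{eqn:opt_FP_t}--\eqref{eqn:opt_FE_t} under the stated rank constraints.

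I expect the main obstacle to be the careful bookkeeping in the inductive step: correctly identifying $\overline{L}_t^{i,(1)},\overline{L}_t^{i,(2)}$ as the block-stacked coefficients of $e_t^i$ and $e_t^j$ in $y_{t+1}^i$ (keeping straight the partially observable Kalman update, the deterministic $A^{(1,2)}x^{(2)}$ term, and the fully observable row $x_{t+1}^{(2)}$), checking that every cross term between the $\mathcal{H}_t^i$-measurable part and the zero-mean error/noise terms vanishes, and matching the resulting traces against the formula for $\widetilde{c}_t^{i*}$ --- in particular the appearance of the $(1,1)$-block $U_{t+1}^{i,(1,1)}$ against $G^i$ versus the full $U_{t+1}^i$ against the stacked $w_t$ channel. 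Nothing here is conceptually new beyond Theorems~\ref{thm:sufficient_statistics_gnr}, \ref{thm:dpp} and \ref{general_result}, but it is where a block-index or sign slip would most easily occur.
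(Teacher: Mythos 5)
Your proposal is correct and follows exactly the route the paper intends: the paper omits the proof of Theorem \ref{thm:NE_mixed}, stating only that it follows by applying the arguments of Theorems \ref{thm:sufficient_statistics}, \ref{thm:dpp} and \ref{general_result} to the partially observable block, and your backward induction on the stacked statistic $y_t^i$ — with the block-structured loadings $\overline{L}_t^{i,(1)},\overline{L}_t^{i,(2)}$, the zero-conditional-mean error/noise cross terms, and the split between the $U_{t+1}^{i,(1,1)}$-against-$G^i$ and full-$U_{t+1}^i$-against-$W$ traces — is precisely that argument carried out. The identifications you make (top blocks mirroring $L_2,L_1$ from \eqref{eqn:L2}--\eqref{eqn:L1}, bottom blocks from substituting $x_t^{(1)}=\widehat{x}_t^{P,(1)}-e_t^P$ and $u_t^E=F_t^{E*}y_t^E$ into the fully observed row, terminal constant $\Tr(Q_T^{i,(1,1)}\widehat{\Sigma}_T^i)$) all check out against the stated formulas.
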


\section{Numerical Experiment: the Bargaining Game}\label{sec:experiment}

In this section, we perform some numerical experiments on a bargaining game example which can be cast into the framework introduced in Section \ref{sec:more_general_setup}. Consider a two-player bargaining or negotiation game where a buyer and a seller must agree on the value of a good.
Each party has a target price, that is the price they want to achieve by agreement at the deadline. The target price depends on their view of the project's true value. The buyer (resp. seller) does not know the target price of the seller (resp. buyer) or the true value of the good. The challenge is to establish a model for the bargaining situation and  find the optimal bidding strategies when both parties have partial information about their counterparties, under some uncertainties (e.g. market fluctuations).

In this section we focus on the case $n_1=1$ when the opponent's state estimate can be inferred. The case where $n_1=2$, where this is not the case, has similar results and these are deferred to Appendix~\ref{app:experiment}.

\subsection{Mathematical Set-up}\label{sec:bargaining}
 We now cast this bargaining game into the  mathematical framework introduced in Section \ref{sec:more_general_setup}. Assume we have two players, a buyer $B$ and a seller $S$, who aim to reach an agreement on the value (or the price) of a good. The negotiation takes place over a finite period of time $T$. At each timestamp $t$, the buyer and seller simultaneously offer prices. We let $x_t^B\in \mathbb{R}$ be the price offered by the buyer and $x_t^S\in \mathbb{R}$ be the price offered by the seller.
The dynamics of the offers follow
    \begin{eqnarray}
     x_{t+1}^B = x_{t}^B + u_t^B +{\epsilon}_t^B,  \quad     x_{t+1}^S = x_{t}^S + u_t^S +{\epsilon}_t^S, \text{ with initial values } x_0^B, x_0^S,
     \label{bar_state_2}\label{bar_state1}
    \end{eqnarray}
 Here $u_t^B\in \mathbb{R}$ is the change in the buyer's offer and $u_t^S\in \mathbb{R}$ is the change in the seller's offer at time $t$. The random variables ${\epsilon}_t^B$ and ${\epsilon}_t^S$ are IID, representing the noise in both parties offers with ${\epsilon}_t^B\sim \mathcal{N}(0,\overline{W}^B)$ and ${\epsilon}_t^S\sim \mathcal{N}(0,\overline{W}^S)$, respectively. We note that  $\epsilon_t^B$ and $\epsilon_t^S$ serve as regularization terms to guarantee the non-degeneracy of the state noise. Another way of thinking about this is to consider $u_t^B$ and $u_t^S$ as the {\it intended} change of their offers when players can not completely control the differences between their offers (for example, due to some external restrictions). Both players can observe each other's exact offers. Thus $(x_t^B,x_t^S)^\top$ corresponds to  the fully observable part $x_t^{(2)}$ in Section \ref{sec:more_general}.
   
We assume the value of the good $p_t\in \mathbb{R}$ is not available to both players and its dynamics follow:
\vspace{-0.3cm}
\begin{equation}\label{val_dyna_1}
    p_{t+1} = p_t + w_t,
\end{equation}
where $\{w_t\}_{t=0}^{T-1}$ is a sequence of IID Gaussian random variables with zero mean and covariance $\overline{W}\in \mathbb{R}$. Both the buyer and the seller do not have access to the true value of the good. Instead, they observe a noisy version of the value using their private information. At time $t=0$,
player $i$ ($i=B,S$) believes that the initial value $p_0\sim \mathcal{N}(\widehat{p}_0^i,W^i_0)$,
and after that player $i$ observes the following noisy signal:
    \begin{equation}\label{eqn:B_obser}
    z_{t+1}^i = p_{t+1} + \,w^i_{t+1},\quad w^i_{t+1}\sim \mathcal{N}(0,G^i),\quad t=0,1,\cdots,T-1,
    \end{equation}
where $\{w^i_{t}\}_{t=1}^{T-1}$ is a sequence of IID random variables, and $\{w^B_{t}\}_{t=1}^{T-1}$ and $\{w^S_{t}\}_{t=1}^{T-1}$ are independent of each other.
Thus  $p_t$ corresponds to  the partially observable part $x_t^{(1)}$ in \eqref{eq:linear_dynamics_gnr} of Section \ref{sec:more_general}, with $n_1=1$.

We formulate player $i$'s ($i=B,S$) objective in the game as
\vspace{-0.1cm}
    \begin{equation}
   \min_{\{u_t^i\}_{t=0}^{T-1}}\mathbb{E}\left.\left[\alpha_i \left(x_T^B-x_T^S\right)^2+\beta_i \left(x_T^i-(1+\delta_i)p_T\right)^2+\sum_{t=0}^{T-1}R_t^i(u_t^i)^2\,\right\vert\, \mathcal{H}^i_0\right],\label{eqn:bar_cost_buyer}
\end{equation}
   where $\delta_B\in(-1,0)$ and $\delta_S\in(0,1)$  are the scalars that determine the buyer's and the seller's target price at terminal time $T$. The constants $\alpha_B>0$ and $\alpha_S>0$ are the penalties for not reaching an agreement, and $\beta_B>0$ and $\beta_S>0$ are the penalties for deviating from their target prices. The quadratic terms $\alpha_S \left(x^B_T-x^S_T\right)^2$ and $\alpha_B \left(x^B_T-x^S_T\right)^2$ can be viewed as a relaxation of the hard constraint $x^B_T=x^S_T$. The parameters $R_t^B>0$ and $R_t^S>0$ measure the cost of adjusting the offer price at each time step, thus the final terms represent the penalty for making concessions. The filtrations 
$\mathcal{H}^B_0:=\{\widehat{{\xi}}_0^B,W_0^B,W_0^S\} $ and $\mathcal{H}^S_0:=\{\widehat{{\xi}}_0^S,W_0^B,W_0^S\}$ represent the information available at time $0$.

Both players have the incentive to reach an agreement at terminal time $T$. The desire to reach this agreement is characterized by the value of $\alpha_B$ and $\alpha_S$, which may be different for the buyer and the seller. The hard constraint $x^B_T=x^S_T$ can be recovered by letting $\alpha_B$ and $\alpha_S$ tend to infinity. The seller wants to sell the good at a price that is higher than (his estimate of) the true price and thus $\delta_S>0$. Similarly $\delta_B<0$ as the buyer has the incentive to buy at a price lower than his estimated true price.

\subsection{Experiments}\label{subsec:3D_exp}
In this section, we present some numerical experiments and discuss the effect of observation noise and our information corrections for the bargaining game introduced in Section \ref{sec:bargaining}.  We focus on the case $n_1=1$, where the dynamics of the value of the good and the players' noisy observations are defined in \eqref{val_dyna_1}-\eqref{eqn:B_obser}. The bargaining model considered in this section satisfies the conditions for the special case described in point 5. of Remark \ref{remark:kalman}, where each player can fully recover the opponent's state estimate in the previous step.

\vspace{-0.2cm}
\paragraph{Experimental Set-up.} In the bargaining game  \eqref{bar_state1}-\eqref{eqn:bar_cost_buyer}, the model parameters are, 
\[
A_t = I,\,\, B_t^B =
\begin{bmatrix}
0 \\
1\\
0
\end{bmatrix},\quad
B_t^S =
\begin{bmatrix}
0 \\
0\\
1
\end{bmatrix},
\quad  W=
\begin{bmatrix}
    \overline{W} & 0 & 0\\
    0 & \overline{W}^B &0 \\
    0 & 0 & \overline{W}^S
\end{bmatrix},\quad
Q_t^B=Q_t^S=0, \text{ and }
\]
\[
Q_T^B=
\begin{bmatrix}
\beta_B(1+\delta_B)^2 & -\beta_B(1+\delta_B) & 0\\
-\beta_B(1+\delta_B) & \alpha_B+\beta_B & -\alpha_B\\
0 & -\alpha_B & \alpha_B
\end{bmatrix},\quad
Q_T^S=
\begin{bmatrix}
\beta_S(1+\delta_S)^2 & 0 & -\beta_S(1+\delta_S)\\
0 & \alpha_S &  -\alpha_S\\
-\beta_S(1+\delta_S) & -\alpha_S & \alpha_S + \beta_S
\end{bmatrix},
\]
for $ t=0,1,\ldots, T-1$. Also we have $H_t^i=I$ for $i=S,B$. 

In the experiments we let $\alpha_B=\alpha_S = 50$, $\beta_B=\beta_S=30$, $\delta_B=-0.05$, $\delta_S=0.05$, and $T=10$, so the players care more about reaching an agreement with each other. We set the penalty function to be $R_t^i=\rho_i\exp(-\gamma_i t)$ for $i=B,S$ with $\rho_B=\rho_S=15$ and $\gamma_B=\gamma_S=0.1$. The penalty function decays over time which allows players to be more flexible near the deadline to reach an agreement. For the initial state we set $p_0=50$, $x_0^B=10$, $x_0^S=90$. We also set $\overline{W}=9$ for the noise in the dynamics of the true value of the good, and $\overline{W}^B=\overline{W}^S=10^{-12}$. The reason for adding the small noise to $x_t^B$ and $x_t^S$ is to guarantee the well-definedness of the problem. In practice we can set $\overline{W}^B=\overline{W}^S=0$, and the numerical experiments will still work.

To see the effect of the observation noise, we let the buyer have a much more noisy observation of the true price ($G^B=100$ and $G^S=1$). We also set $\widehat{x}_0^B=40$ with $W_0^B=100$ for the buyer and $\widehat{x}_0^S=51$ with $W_0^S=1$ for the seller, thus the buyer has a far more inaccurate guess of the initial state. {In the figures and tables we will write IC for information corrections.}
\vspace{-0.2cm}
\paragraph{Effect of Observation Noise.} Since the buyer receives relatively noisy signals of the true price, their price estimate (indicated in orange) will be more inaccurate than the seller's (indicated in blue) in the example shown in Figure \ref{fig:effect_obs_noise}. The behaviour of both players is similar to that in the full information case, since the buyer utilizes the seller's accurate information to improve their own state estimate.
\vspace{-0.2cm}
\paragraph{Effect of Information Corrections.} A key contribution of our work is information corrections, where players correct their estimate of the state after observing their opponent's actions. We demonstrate the power of the information corrections 
in Figure \ref{fig:effect_info_corr}. When the buyer skips steps \eqref{eq:J_Kalman_gnr}-\eqref{eq:sig_plus_hat_gnr}, their state estimate will rely purely on their own observations and thus can be very inaccurate. However, 
with information corrections, they can obtain a better estimate which is less affected by the noisy observations. Hence they are more likely to reach an agreement with the seller at a reasonable price.

  \begin{figure}[H]
   \vspace{-0.5cm}
  \centering
  \begin{subfigure}[b]{0.35\textwidth}
    \includegraphics[width=\textwidth]
    {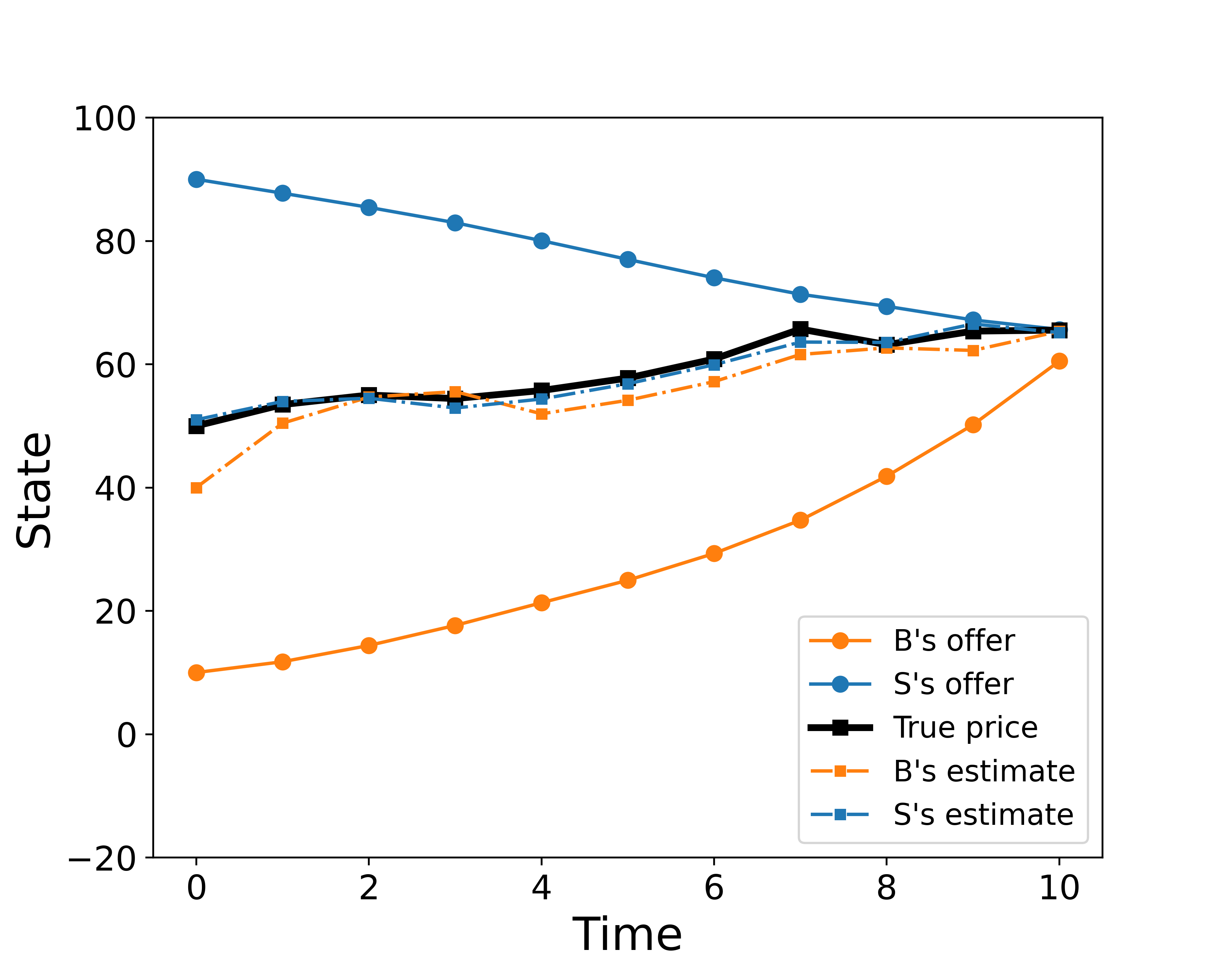}
    \end{subfigure}
      \begin{subfigure}[b]{0.35\textwidth}
    \includegraphics[width=\textwidth]{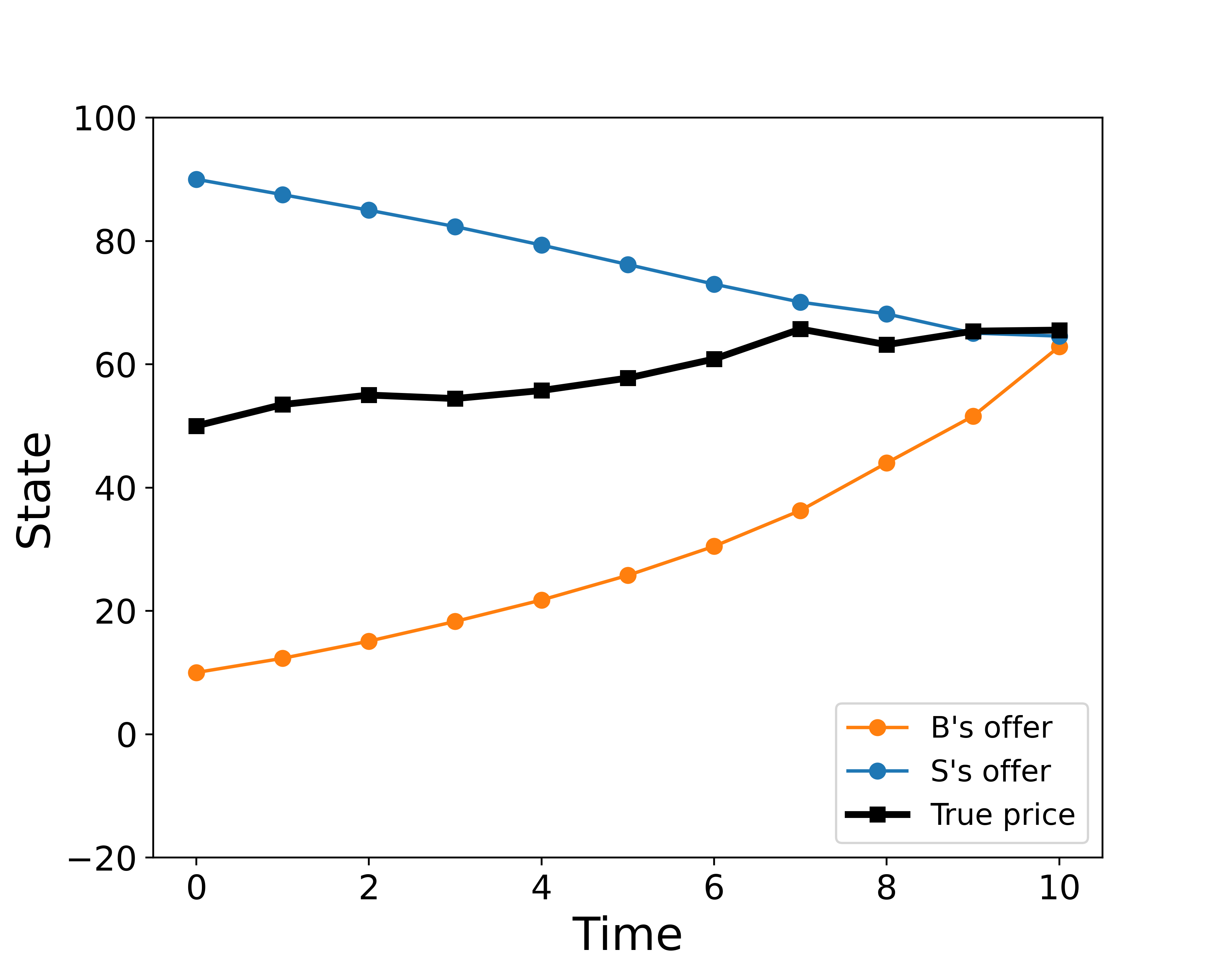}
  \end{subfigure}
  \vspace{-0.3cm}
  \caption{\label{fig:effect_obs_noise}Comparison between the full observation (right) and partial observation (left) cases.}
  \vspace{-0.5cm}
\end{figure}

\begin{figure}[H]
 \vspace{-0.5cm}
\centering
  \begin{subfigure}[b]{0.35\textwidth}
    \includegraphics[width=\textwidth]{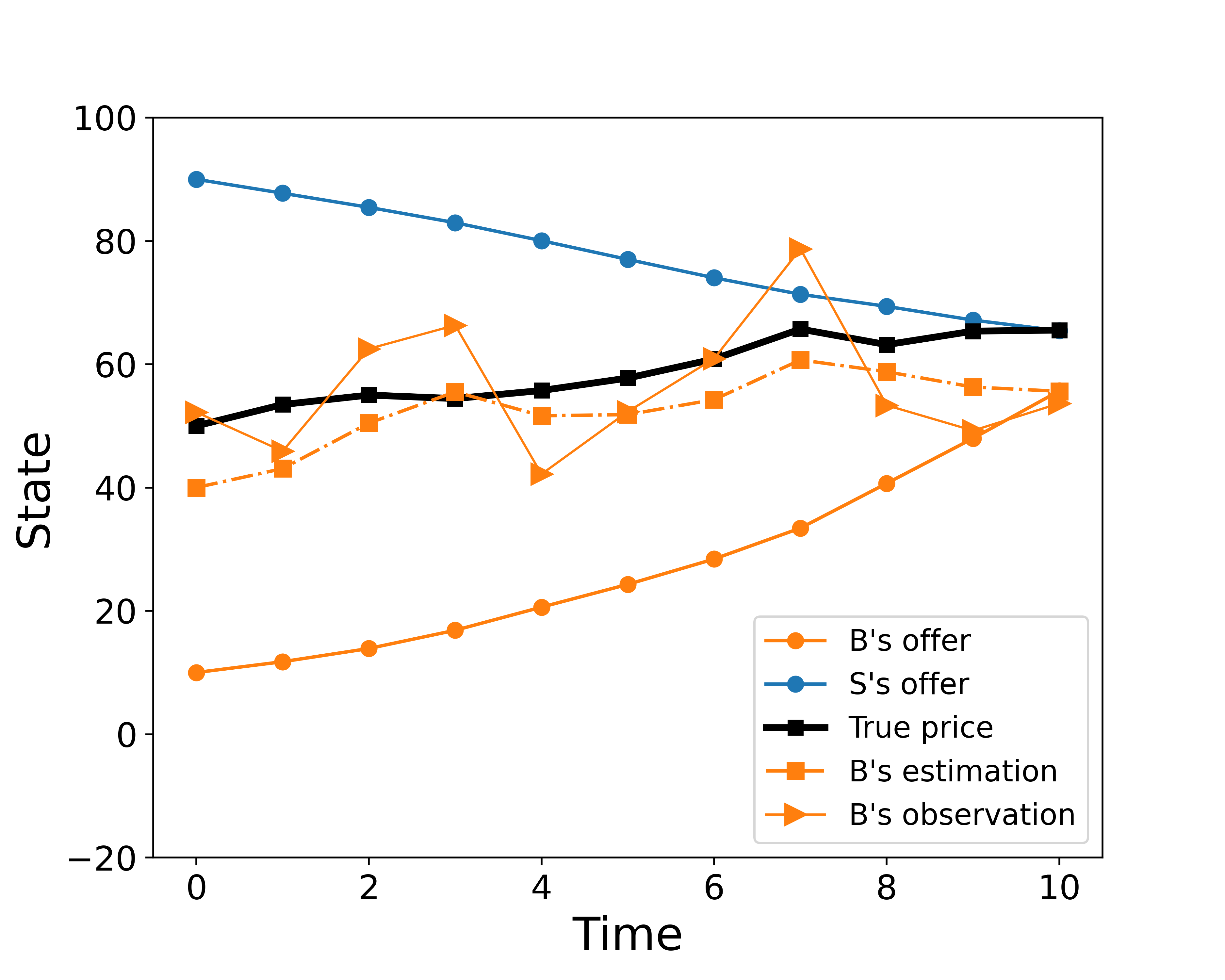}
  \end{subfigure}
  \begin{subfigure}[b]{0.35\textwidth}
    \includegraphics[width=\textwidth]{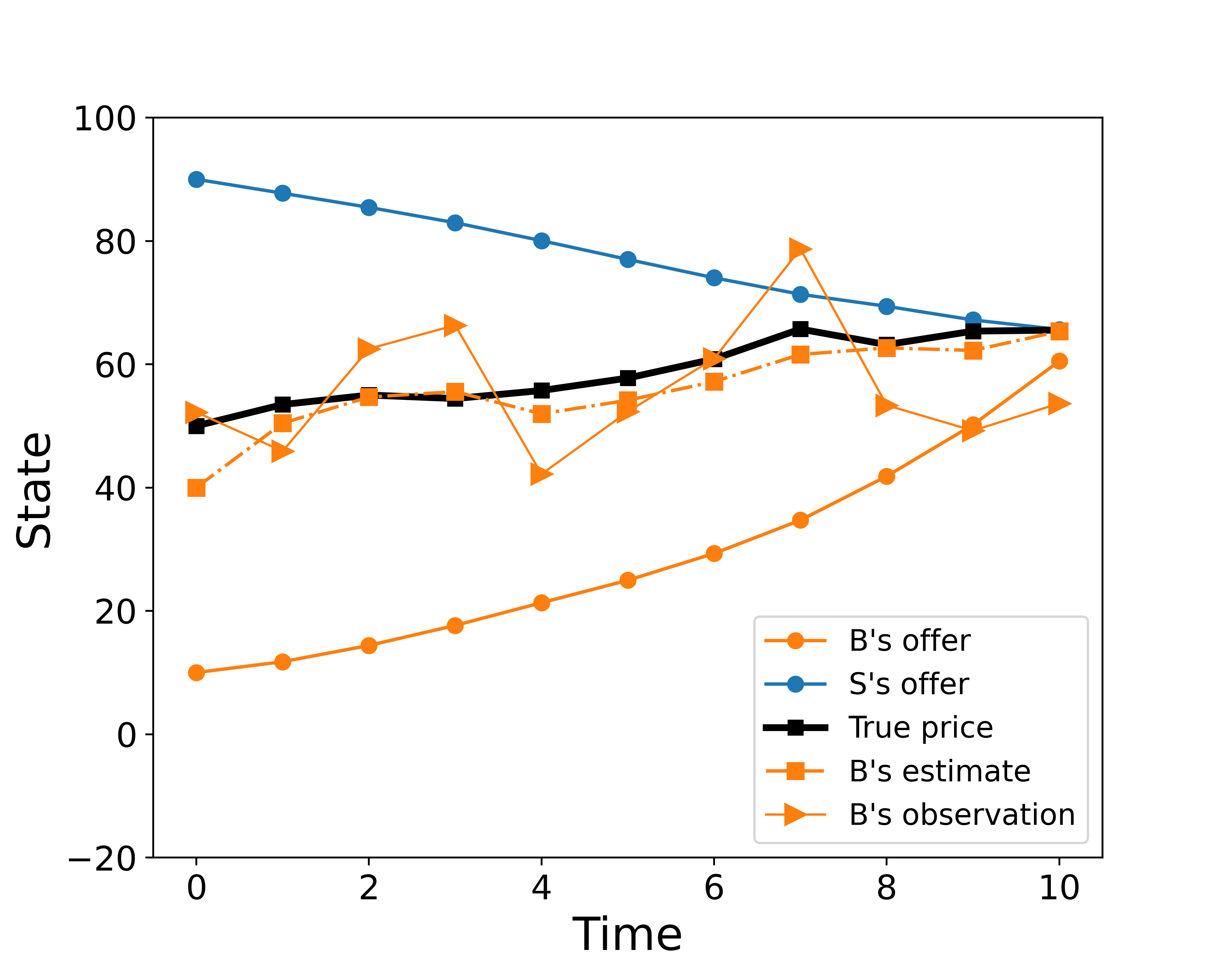}
    \end{subfigure}
    \caption{\label{fig:effect_info_corr}The buyer's price estimate with IC (right) and without IC (left).}
   \vspace{-0.5cm}
\end{figure}

We now show some statistics for the buyer's estimation error in Table \ref{tab:effect_inf_corr}, where both the average mean squared error and average mean absolute error of 500 experiments (each experiment consists of 10 rounds of bargaining) are smaller when information corrections are used. We also show the effect of using information corrections on the outcomes of the bargaining game in Table \ref{tab:bar_outcomes}. If the difference between the players' final offers is less than 3, we consider them to have reached an agreement. 
We can see that with information corrections the players are more likely to reach an agreement.
The difference is shown in Figure \ref{fig:effect_info_corr}. We can see that with information correction the final offers of the buyer and the seller are closer to each other and they have made the deal, while 
without information correction they are not able to reach an agreement in the end. 
\begin{table}[H] 
 \centering
        \begin{tabular}{l*{4}{c}r}
       & Mean squared error & Mean absolute error    \\
\hline\hline
 With information correction        & 17.43 & 3.10  \\ \hline
Without information correction & 35.91 & 4.83   \\
\end{tabular}
       \caption{Effect of IC on the buyer's estimation error (average over 500 experiments). }
       \label{tab:effect_inf_corr}
        \vspace{-0.2cm}
   \end{table}

Furthermore, the above setting is an ``asymmetric'' case, where the buyer has a more inaccurate estimate of the initial state and receives noisier signals during the negotiation. We now compare the number of agreements obtained in this case (Table \ref{tab:bar_outcomes}) with that obtained in two symmetric cases, which can be considered as benchmarks.  In the symmetric case where both players have ``accurate'' information with small observation noises, we let $G^B=G^S=1$ for both players, and set $\widehat{x}_0^B=49$, and $\widehat{x}_0^S=51$ with $W_0^B=W_0^S=1$; in the symmetric where they both have inaccurate information, we set $G^B=G^S=100$,  $\widehat{x}_0^B=40$, and $\widehat{x}_0^S=60$ with $W_0^B=W_0^S=100$. We run 500 experiments in each case, and the number of agreements when they both have ``accurate'' information is the same regardless of whether they utilize the observed actions from their opponent.
However, when both players receive very noisy signals, having information corrections significantly improves the number of agreements. This further demonstrates the need for incorporating information corrections especially when players have different levels of observation noise, as is often the case in practice since players may have a variety of different information sources. We also note that the number of agreements reached in the asymmetric case is closer to that in the inaccurate symmetric case. Although the players have improved their state estimate at the previous time step, when the players make offers they are based on their current noisy observation.

\begin{table}[H] 
 \centering
        \begin{tabular}{l*{4}{c}r}
       & Asymmetric & Symmetric (``accurate'') & Symmetric (inaccurate)    \\
\hline\hline
 With IC        & 371 & 470 & 367  \\ \hline
Without IC & 253 & 470 & 247   \\
\end{tabular}
       \caption{Number of agreements achieved in 500 experiments with and without IC.}
       \label{tab:bar_outcomes}
       \vspace{-0.3cm}
   \end{table}


We can also compare the players' cost in the asymmetric and symmetric cases (see Table \ref{tab:cost}). The costs are calculated empirically based on \eqref{eqn:bar_cost_buyer} 
In the asymmetric case and the symmetric case when they both have very noisy observations, both players achieve significantly lower costs when using information corrections. 

  \begin{table}[htbp] 
 \centering
        \begin{tabular}{l*{4}{c}r}
       & Asymmetric & Symmetric (``accurate'') & Symmetric (inaccurate)    \\
\hline\hline
 With IC (Buyer/Seller)        & 2250/2235 & 1923/2053 & 2505/2715  \\ \hline
Without IC (Buyer/Seller)  & 3068/2819 & 1924/2053 & 3244/3359 
\end{tabular}
       \caption{Players' average costs with and without IC in 500 experiments.}
       \label{tab:cost}
        \vspace{-0.5cm}
   \end{table}
\vspace{-0.2cm}
\paragraph{Aiming at Beneficial Prices.} In the above experiments we focused on the case where both players strive to reach an agreement with their opponent ($\alpha_i>\beta_i$ for $i=B,S$). However, in some situations players may be more keen on achieving their target price or a more beneficial price in negotiations and not be so concerned if an agreement is reached. 
We now let the buyer focus more on pursuing their target price and let the seller mainly seek an agreement with the buyer.
We let $\alpha_B=20$, $\alpha_S=50$ $\beta_B=40$, $\beta_S=30$, $\rho_B=\rho_S=10$, and $\overline{W}=1$. We also let the buyer have far more inaccurate information than the seller by setting $G^B=100$,  $G^S=1$, $\widehat{x}_0^B=70$, $W_0^B=100$, $\widehat{x}_0^S=53$, and $W_0^S=1$. Other parameters are set to be the same as in the previous experiments. 

In Table \ref{tab:agreement_prices}, we can see that the information corrections significantly improve the number of agreements achieved. 
Here we set the agreement price achieved to be the average of $x_T^B$ and $x_T^S$. For a fair comparison we 
only consider situations
where an agreement is achieved in both cases (with and without information corrections). We observe that there is a gap between the confidence intervals of the agreement prices, which illustrates that 
the buyer can obtain a better deal by using the information corrections to more effectively exploit the seller's willingness to sacrifice their target price in their desire to reach an agreement.

  \begin{table}[H] 
 \centering
        \begin{tabular}{l*{4}{c}r}
       & Number of agreements & Mean of APs  & 95\% confidence interval of APs    \\
\hline\hline
 With IC       & 442 & 48.58 & (48.31, 48.84)  \\ \hline
Without IC &342 & 49.61 & (49.34, 49.88)

\end{tabular}
       \caption{Bargaining outcomes and agreement prices (APs) with and without IC in 500 experiments.}
       \label{tab:agreement_prices}
   \end{table}


\newpage
\bibliographystyle{plain}
\bibliography{references}
\newpage
\appendix
\section{Tower Property}\label{app:tower}

We return to the calculation of \eqref{eqn:LHS_tower} and \eqref{eqn:RHS_tower}. Recall that we  consider the case when $F_t^P$ has rank $m<n$ and $F_t^E$ has rank $k< n$ for all $t=0,1,\ldots,T-1$.

\paragraph{Matching LHS with RHS.} We can see the first term (quadratic in $\widehat{x}_{T-1}^P$) on the LHS and RHS are the same. Thus we are left to show that the constants terms in \eqref{eqn:LHS_tower} and \eqref{eqn:RHS_tower} are the same. We prove this in the following steps.

\paragraph{\underline{Step 1}: Expanding $L_1$ and $L_2$ on the LHS.} We first calculate the three terms in \eqref{eqn:LHS_tower} that involves $L_1$ and $L_2$ using \eqref{eqn:L1} and \eqref{eqn:L2}. The first term is given by
\begin{eqnarray}
    \Tr\Big(L_1^\top Q_T^P L_1 \widehat{\Sigma}_{T-1}^E\Big) &=& \underbrace{\Tr\big(( B^E_{T-1}F_{T-1}^E)^\top Q_T^P B^E_{T-1}F_{T-1}^E \widehat{\Sigma}_{T-1}^E\big)}_{(1a)}\nonumber\\
    &&+\underbrace{\Tr\big(((I-K_T^PH_T^P)A_{T-1}\Pi_{T-1}^P)^\top Q_T^P (I-K_T^PH_T^P)A_{T-1}\Pi_{T-1}^P\widehat{\Sigma}_{T-1}^E\big)}_{(1b)}\nonumber\\
    &&+2\Tr\big(((I-K_T^PH_T^P)A_{T-1}\Pi_{T-1}^P)^\top Q_T^PB^E_{T-1}F_{T-1}^E \widehat{\Sigma}_{T-1}^E\big)\nonumber
\end{eqnarray}
For the second term, we first note that by adding and subtracting $A_{T-1}$, we obtain
\begin{eqnarray*}
L_2 &=& - (I-K_T^PH_T^P)A_{T-1}\Pi_{T-1}^P - B^E_{T-1}F_{T-1}^E - K_T^PH_T^PA_{T-1}\\
&=&- (I-K_T^PH_T^P)A_{T-1}\Pi_{T-1}^P - B^E_{T-1}F_{T-1}^E +(I-K_T^PH_T^P)A_{T-1} - A_{T-1}
\end{eqnarray*}
Then, the second term is given by
\begin{eqnarray}
    \Tr\Big(L_2^\top Q_T^P L_2 \widehat{\Sigma}_{T-1}^P\Big) &=& \underbrace{\Tr\big(((I-K_T^PH_T^P)A_{T-1}\Pi_{T-1}^P)^\top Q_T^P (I-K_T^PH_T^P)A_{T-1}\Pi_{T-1}^P \widehat{\Sigma}_{T-1}^P\big)}_{(2a)}\nonumber\\
    &&+\underbrace{\Tr\big((A_{T-1}+B^E_{T-1}F_{T-1}^E)^\top Q_T^P (A_{T-1}+ B^E_{T-1}F_{T-1}^E)\widehat{\Sigma}_{T-1}^P\big)}_{(2b)}\nonumber\\
    &&+\underbrace{\Tr\big(((I-K_T^PH_T^P)A_{T-1})^\top Q_T^P(I-K_T^PH_T^P)A_{T-1}\widehat{\Sigma}_{T-1}^P\big)}_{(2c)}\nonumber\\
    &&+2\Tr\big(((I-K_T^PH_T^P)A_{T-1}\Pi_{T-1}^P)^\top Q_T^P (A_{T-1}+B^E_{T-1}F_{T-1}^E)\widehat{\Sigma}_{T-1}^P\big)\nonumber\\
    &&- \underbrace{2\Tr\big(((I-K_T^PH_T^P)A_{T-1}\Pi_{T-1}^P)^\top Q_T^P(I-K_T^PH_T^P)A_{T-1}\widehat{\Sigma}_{T-1}^P\big)}_{(2d)}\nonumber\\
    &&- 2\Tr\big((A_{T-1}+ B^E_{T-1}F_{T-1}^E)^\top Q_T^P(I-K_T^PH_T^P)A_{T-1}\widehat{\Sigma}_{T-1}^P\big)\nonumber
\end{eqnarray}
Similarly the third term is given by
\begin{eqnarray}
    2\Tr\Big(L_1^\top Q_T^P L_2 \widetilde{\Sigma}_{T-1}^{(P,E)}\Big) &=& 
    \underbrace{-2\Tr\big((B^E_{T-1}F_{T-1}^E)^\top Q_T^P (A_{T-1}+B^E_{T-1}F_{T-1}^E)\widetilde{\Sigma}_{T-1}^{(P,E)}\big)}_{(3a)}\nonumber\\
    &&-2\Tr\big((B^E_{T-1}F_{T-1}^E)^\top Q_T^P (I-K_T^PH_T^P)A_{T-1}\Pi_{T-1}^P \widetilde{\Sigma}_{T-1}^{(P,E)}\big)\nonumber\\
     &&+2\Tr\big((B^E_{T-1}F_{T-1}^E)^\top Q_T^P (I-K_T^PH_T^P)A_{T-1} \widetilde{\Sigma}_{T-1}^{(P,E)}\big)\nonumber\\
    &&-2\Tr\big(((I-K_T^PH_T^P)A_{T-1}\Pi_{T-1}^P)^\top Q_T^P(A_{T-1}+B^E_{T-1}F_{T-1}^E)\widetilde{\Sigma}_{T-1}^{(P,E)}\big)\nonumber\\
    &&- \underbrace{2\Tr\big(((I-K_T^PH_T^P)A_{T-1}\Pi_{T-1}^P)^\top Q_T^P(I-K_T^PH_T^P)A_{T-1}\Pi_{T-1}^P\widetilde{\Sigma}_{T-1}^{(P,E)}\big)}_{(3b)}\nonumber\\
    &&+ \underbrace{2\Tr\big(((I-K_T^PH_T^P)A_{T-1}\Pi_{T-1}^P)^\top Q_T^P(I-K_T^PH_T^P)A_{T-1}\widetilde{\Sigma}_{T-1}^{(P,E)}\big)}_{(3c)}\nonumber
\end{eqnarray}

Now we observe that all the constant terms on RHS except $\Tr(\Gamma_{T-1}^\top Q_T \Gamma_{T-1}W)$ can be cancelled with (2b), (1a), and (3a) on LHS, that is
\begin{equation}\label{RHS_simplify}
    \text{Constant terms on RHS} = (2b) + (1a) + (3a) + \Tr(\Gamma_{T-1}^\top Q_T \Gamma_{T-1}W).
\end{equation}
Therefore, our goal now is to show that $\Tr(\Gamma_{T-1}^\top Q_T \Gamma_{T-1}W)$ equals the rest of terms on LHS. Before matching the terms, we first merge and simplify some of the terms on LHS.

\paragraph{\underline{Step 2}: Merging quadratic terms of $\Pi_{T-1}^P$ on LHS.} Recall that $\Pi_{T-1}^P$ is defined as $\Pi_{T-1}^P=\big(\widehat{\Sigma}_{t-1}^P -  \widetilde{\Sigma}_{t-1}^{(P,E)}\big)\big(\widehat{\Sigma}_{t-1}^{(P,E)}\big)^{-1}$. Thus we have 
\begin{equation}\label{eqn:trick_Pi}
    \Pi_{T-1}^P \widehat{\Sigma}_{t-1}^{(P,E)}=\widehat{\Sigma}_{t-1}^P -  \widetilde{\Sigma}_{t-1}^{(P,E)}.
\end{equation}
This provides a way to reduce the order of $\Pi_{T-1}^P$ on LHS. Collecting terms which are quadratic in $\Pi_{T-1}^P$ we obtain
\begin{eqnarray}
    (1b) + (2a) + (3b) &=& \Tr\Big(\big((I-K_T^PH_T^P)A_{T-1}\Pi_{T-1}^P\big)^\top Q_T^P (I-K_T^PH_T^P)A_{T-1}\Pi_{T-1}^P\cdot\nonumber\\
    &&\qquad\big(\widehat{\Sigma}_{T-1}^P + \widehat{\Sigma}_{T-1}^E - \widetilde{\Sigma}_{T-1}^{(P,E)}- \widetilde{\Sigma}_{T-1}^{(E,P)}\big)\Big)\nonumber\\
    &=& \Tr\Big(\big((I-K_T^PH_T^P)A_{T-1}\Pi_{T-1}^P\big)^\top Q_T^P (I-K_T^PH_T^P)A_{T-1}\Pi_{T-1}^P\widehat{\Sigma}_{T-1}^{(P,E)}\Big),\label{eqn:step2_inte1}\\
    &=& \Tr\Big(\big((I-K_T^PH_T^P)A_{T-1}\Pi_{T-1}^P\big)^\top Q_T^P (I-K_T^PH_T^P)A_{T-1}\big(\widehat{\Sigma}_{t-1}^P -  \widetilde{\Sigma}_{t-1}^{(P,E)}\big)\hspace{-0.1cm}\Big),\label{eqn:step2_inte2}
\end{eqnarray}
where \eqref{eqn:step2_inte1} holds by definition of $\widehat{\Sigma}_{T-1}^{(P,E)}$ given in \eqref{eq:estimation_error_cov}, and \eqref{eqn:step2_inte2} holds by \eqref{eqn:trick_Pi}. Now we observe that the term in \eqref{eqn:step2_inte2} will be cancelled with a half of the sum $(2d)+(3c)$, since
\[
(2d)+(3c) = 2\Tr\big(((I-K_T^PH_T^P)A_{T-1}\Pi_{T-1}^P)^\top Q_T^P(I-K_T^PH_T^P)A_{T-1}\big(\widetilde{\Sigma}_{T-1}^{(P,E)}-\widehat{\Sigma}_{T-1}^P\big)\Big).
\]
Therefore in summary, we have
\begin{equation*}
     (1b) + (2a) + (3b) + (2d)+(3c) = \Tr\big(((I-K_T^PH_T^P)A_{T-1}\Pi_{T-1}^P)^\top Q_T^P(I-K_T^PH_T^P)A_{T-1}\big(\widetilde{\Sigma}_{T-1}^{(P,E)}-\widehat{\Sigma}_{T-1}^P\big)\Big).
\end{equation*}

\paragraph{\underline{Step 3}: Merging the last three terms on LHS.} Going back to \eqref{eqn:LHS_tower}, we now consider the last three constant terms that do not involve $L_1$ and $L_2$. We now add the first two terms to (2c), which leads to
\begin{eqnarray}
    &&\Tr((K_T^P)^\top Q_T^P K_T^P G^P) + \Tr(\Gamma_{T-1}^\top (H_T^P)^\top (K_T^P)^\top Q_T^P K_T^P H_T^P\Gamma_{T-1}W) +(2c)\nonumber\\
    &=& \Tr\Big( (K_T^P)^\top Q_T^P K_T^P \big( H_T^P\Gamma_{T-1}W\Gamma_{T-1}^\top(H_T^P)^\top+G^P\big)\Big)\nonumber\\
    &&+\Tr\big(((I-K_T^PH_T^P)A_{T-1})^\top Q_T^P(I-K_T^PH_T^P)A_{T-1}\widehat{\Sigma}_{T-1}^P\big)\nonumber\\
    &=& \Tr\Big( (K_T^P)^\top Q_T^P K_T^P \big( H_T^P\Gamma_{T-1}W\Gamma_{T-1}^\top(H_T^P)^\top+G^P+H_T^P A_{T-1}\widehat{\Sigma}_{T-1}^P A_{T-1}^\top (H_T^P)^\top \big)\Big)\nonumber\\
    && -2\Tr(A_{T-1}^\top Q_T^P K_T^PH_T^PA_{T-1}\widehat{\Sigma}_{T-1}^P) + \Tr(A_{T-1}^\top Q_T^P A_{T-1}\widehat{\Sigma}_{T-1}^P)\label{eqn:step3_inte1}
\end{eqnarray}
To simply the first term in the above sum, we use the following two facts.
\paragraph{Fact 1.} We first note that
\begin{eqnarray}
    \big(\widehat{\Sigma}^{P}_T\big)^{-} &=& A_{T-1}(\widehat{\Sigma}^{P}_{T-1})^+A_{T-1}^{\top} +\Gamma_{T-1}W \Gamma_{T-1}^{\top}\label{eqn:step3_inte2}\\
    &=& A_{T-1}\Big(\widehat{\Sigma}^P_{T-1} - (\widehat{\Sigma}^P_{T-1}-\widetilde{\Sigma}_{T-1}^{(P,E)})(\widehat{\Sigma}_{t-1}^{(P,E)})^{-1} \big(\widehat{\Sigma}_{t-1}^P-\widetilde{\Sigma}_{T-1}^{(P,E)}\big)^\top\Big)A_{T-1}^{\top} +\Gamma_{T-1}W \Gamma_{T-1}^{\top},\label{eqn:step3_inte3}
\end{eqnarray}
where \eqref{eqn:step3_inte2} holds by \eqref{eq:Sigma_hat_minus} and \eqref{eqn:step3_inte3} holds by \eqref{eq:sig_plus_hat}. Thus by rearranging terms in \eqref{eqn:step3_inte2} and since $\Pi_{T-1}^P=\big(\widehat{\Sigma}_{t-1}^P -  \widetilde{\Sigma}_{t-1}^{(P,E)}\big)\big(\widehat{\Sigma}_{t-1}^{(P,E)}\big)^{-1}$, we have
\begin{equation}\label{step3_fact1}
     A_{T-1}\widehat{\Sigma}^P_{t-1}A_{T-1}^\top + \Gamma_{T-1}W \Gamma_{T-1}^{\top} =  \big(\widehat{\Sigma}^{P}_T\big)^{-} + A_{T-1}(\widehat{\Sigma}^P_{T-1}-\widetilde{\Sigma}_{T-1}^{(P,E)})(\Pi_{T-1}^P)^\top A_{T-1}^\top.
\end{equation}
\paragraph{Fact 2.} By definition of $K_t^i$ in \eqref{eq:K_def}, we have
\begin{equation*}
K_{T}^P\left(H_T^P\big(\widehat{\Sigma}^{P}_T\big)^{-} (H^P_T)^{\top}+G^P\right) = \big(\widehat{\Sigma}^{P}_{T}\big)^{-}(H_{T}^P)^{\top}.
\end{equation*}
Therefore,
\begin{equation}\label{eqn:step3_fact2}
(K_{T}^P)^\top Q_T^PK_{T}^P\left(H_T^P\big(\widehat{\Sigma}^{P}_T\big)^{-} (H^P_T)^{\top}+G^P\right) = (K_{T}^P)^\top Q_T^P\big(\widehat{\Sigma}^{P}_{T}\big)^{-}(H_{T}^P)^{\top}.
\end{equation}

Having the above two facts, we can plug in \eqref{step3_fact1} and \eqref{eqn:step3_fact2} into \eqref{eqn:step3_inte1} to get
\begin{eqnarray}
    &&\Tr((K_T^P)^\top Q_T^P K_T^P G^P) + \Tr(\Gamma_{T-1}^\top (H_T^P)^\top (K_T^P)^\top Q_T^P K_T^P H_T^P\Gamma_{T-1}W) +(2c)\nonumber\\
    &=& \Tr\Big( (K_T^P)^\top Q_T^P K_T^P \big( H_T^P\big(\big(\widehat{\Sigma}^{i}_T\big)^{-} + A_{T-1}(\widehat{\Sigma}^P_{T-1}-\widetilde{\Sigma}_{T-1}^{(P,E)})(\Pi_{T-1}^P)^\top A_{T-1}^\top\big)(H_T^P)^\top+G^P\big)\Big)\nonumber\\
    && -2\Tr(A_{T-1}^\top Q_T^P K_T^PH_T^PA_{T-1}\widehat{\Sigma}_{T-1}^P) + \Tr(A_{T-1}^\top Q_T^P A_{T-1}\widehat{\Sigma}_{T-1}^P)\nonumber\\
    &=& \Tr\Big((K_{T}^P)^\top Q_T^P\big(\widehat{\Sigma}^{P}_{T}\big)^{-}(H_{T}^P)^{\top}\Big) + \Tr\Big( (K_T^P)^\top Q_T^P K_T^P  H_T^P A_{T-1}(\widehat{\Sigma}^P_{T-1}-\widetilde{\Sigma}_{T-1}^{(P,E)})(\Pi_{T-1}^P)^\top A_{T-1}^\top(H_T^P)^\top\Big)\nonumber\\
    && -2\Tr(A_{T-1}^\top Q_T^P K_T^PH_T^PA_{T-1}\widehat{\Sigma}_{T-1}^P) + \Tr(A_{T-1}^\top Q_T^P A_{T-1}\widehat{\Sigma}_{T-1}^P)\nonumber\\
     &=& \Tr\Big( Q_T^P\big(\big(\widehat{\Sigma}^{P}_{T}\big)^{-}-\widehat{\Sigma}^{P}_{T}\big)\Big) + \Tr\Big( (K_T^P)^\top Q_T^P K_T^P  H_T^P A_{T-1}(\widehat{\Sigma}^P_{T-1}-\widetilde{\Sigma}_{T-1}^{(P,E)})(\Pi_{T-1}^P)^\top A_{T-1}^\top(H_T^P)^\top\Big)\nonumber\\
    && -2\Tr(A_{T-1}^\top Q_T^P K_T^PH_T^PA_{T-1}\widehat{\Sigma}_{T-1}^P) + \Tr(A_{T-1}^\top Q_T^P A_{T-1}\widehat{\Sigma}_{T-1}^P)\label{step3_inte4}
\end{eqnarray}
where the last equation holds since $\big(\widehat{\Sigma}^{P}_T\big)^{-}(H^P_T)^\top(K_T^P)^\top= \big(\widehat{\Sigma}^{P}_T\big)^{-}-\widehat{\Sigma}^{P}_{T} $ by \eqref{eq:x_hat_eq2}. Now by \eqref{step3_inte4}, the sum of (2c) and the last three terms in \eqref{eqn:LHS_tower} is given by
\begin{eqnarray}
&&\Tr((K_T^P)^\top Q_T^P K_T^P G^P) + \Tr(\Gamma_{T-1}^\top (H_T^P)^\top (K_T^P)^\top Q_T^P K_T^P H_T^P\Gamma_{T-1}W) +  \Tr(Q_T^P\widehat{\Sigma}_T^P) +(2c)\nonumber\\
    &=& \Tr\Big( Q_T^P\big(\widehat{\Sigma}^{P}_{T}\big)^{-}\Big) + \Tr\Big( (K_T^P)^\top Q_T^P K_T^P  H_T^P A_{T-1}(\widehat{\Sigma}^P_{T-1}-\widetilde{\Sigma}_{T-1}^{(P,E)})(\Pi_{T-1}^P)^\top A_{T-1}^\top(H_T^P)^\top\Big)\nonumber\\
    && -2\Tr(A_{T-1}^\top Q_T^P K_T^PH_T^PA_{T-1}\widehat{\Sigma}_{T-1}^P) + \Tr(A_{T-1}^\top Q_T^P A_{T-1}\widehat{\Sigma}_{T-1}^P)
\end{eqnarray}

\paragraph{\underline{Step 4}: Collecting all the terms on LHS.} Recall that (2b) + (1a) + (3a) can be cancelled with RHS in Step 1,  the sum of (1b) + (2a) + (3b) + (2d) + (3c) is given in Step 2, and the sum of the last three terms on LHS and (2c) is given in Step 3. We are now ready to sum up all the terms on LHS using the simplified forms obtained from Steps 2 and 3. 
\begin{eqnarray}
&&\text{Constant terms on LHS}\nonumber\\
&=& (2b) + (1a) + (3a)\nonumber\\
&&+\Tr\big(((I-K_T^PH_T^P)A_{T-1}\Pi_{T-1}^P)^\top Q_T^P(I-K_T^PH_T^P)A_{T-1}\big(\widetilde{\Sigma}_{T-1}^{(P,E)}-\widehat{\Sigma}_{T-1}^P\big)\Big)\nonumber\\
&&+2\Tr\big(((I-K_T^PH_T^P)A_{T-1}\Pi_{T-1}^P)^\top Q_T^PB^E_{T-1}F_{T-1}^E \widehat{\Sigma}_{T-1}^E\big)\label{step4_inte1}\\
&&+2\Tr\big(((I-K_T^PH_T^P)A_{T-1}\Pi_{T-1}^P)^\top Q_T^P (A_{T-1}+B^E_{T-1}F_{T-1}^E)\widehat{\Sigma}_{T-1}^P\big)\label{step4_inte2}\\
&&- 2\Tr\big((A_{T-1}+ B^E_{T-1}F_{T-1}^E)^\top Q_T^P(I-K_T^PH_T^P)A_{T-1}\widehat{\Sigma}_{T-1}^P\big)\label{step4_inte5}\\
&&-2\Tr\big((B^E_{T-1}F_{T-1}^E)^\top Q_T^P (I-K_T^PH_T^P)A_{T-1}\Pi_{T-1}^P \widetilde{\Sigma}_{T-1}^{(P,E)}\big)\label{step4_inte3}\\
     &&+2\Tr\big((B^E_{T-1}F_{T-1}^E)^\top Q_T^P (I-K_T^PH_T^P)A_{T-1} \widetilde{\Sigma}_{T-1}^{(P,E)}\big)\label{step4_inte6}\\
    &&-2\Tr\big(((I-K_T^PH_T^P)A_{T-1}\Pi_{T-1}^P)^\top Q_T^P(A_{T-1}+B^E_{T-1}F_{T-1}^E)\widetilde{\Sigma}_{T-1}^{(P,E)}\big)\label{step4_inte4}\\
    &&+ \Tr\Big( Q_T^P\big(\widehat{\Sigma}^{P}_{T}\big)^{-}\Big) + \Tr\Big( (K_T^P)^\top Q_T^P K_T^P  H_T^P A_{T-1}(\widehat{\Sigma}^P_{T-1}-\widetilde{\Sigma}_{T-1}^{(P,E)})(\Pi_{T-1}^P)^\top A_{T-1}^\top(H_T^P)^\top\Big)\nonumber\\
    && -2\Tr(A_{T-1}^\top Q_T^P K_T^PH_T^PA_{T-1}\widehat{\Sigma}_{T-1}^P) + \Tr(A_{T-1}^\top Q_T^P A_{T-1}\widehat{\Sigma}_{T-1}^P).\nonumber
\end{eqnarray}
We observe that by adding (part of the terms in) \eqref{step4_inte1}, \eqref{step4_inte2}, \eqref{step4_inte3}, and \eqref{step4_inte4} together, we can obtain the sum
\begin{eqnarray*}
    &&2\Tr\Big(\big((I-K_T^PH_T^P)A_{T-1}\Pi_{T-1}^P\big)^\top Q_T^PB^E_{T-1}F_{T-1}^E \big(\widehat{\Sigma}_{T-1}^P+\widehat{\Sigma}_{T-1}^E-\widetilde{\Sigma}_{T-1}^{(P,E)}-\widetilde{\Sigma}_{T-1}^{(E,P)}\big)\Big)\\
    &=&2\Tr\Big(\big((I-K_T^PH_T^P)A_{T-1}\Pi_{T-1}^P\big)^\top Q_T^PB^E_{T-1}F_{T-1}^E\widehat{\Sigma}_{T-1}^{(P,E)}\Big)\\
    &=& 2\Tr\Big(\big((I-K_T^PH_T^P)A_{T-1}\big)^\top Q_T^PB^E_{T-1}F_{T-1}^E\big(\widehat{\Sigma}_{t-1}^P -  \widetilde{\Sigma}_{t-1}^{(E,P)}\big)\Big)
\end{eqnarray*}
where in the last equation we use the trick \eqref{eqn:trick_Pi} again. This sum will be further cancelled with part of \eqref{step4_inte5} and \eqref{step4_inte6}. After these manipulations we have
\begin{eqnarray}
&&\text{Constant terms on LHS}\nonumber\\
&=& (2b) + (1a) + (3a)\nonumber\\
&&+\Tr\big(((I-K_T^PH_T^P)A_{T-1}\Pi_{T-1}^P)^\top Q_T^P(I-K_T^PH_T^P)A_{T-1}\big(\widetilde{\Sigma}_{T-1}^{(P,E)}-\widehat{\Sigma}_{T-1}^P\big)\Big)\label{step4_inte9}\\
&&+2\Tr\big(((I-K_T^PH_T^P)A_{T-1}\Pi_{T-1}^P)^\top Q_T^P A_{T-1}\widehat{\Sigma}_{T-1}^P\big)\label{step4_inte10}\\
&&- 2\Tr\big(A_{T-1}^\top Q_T^P(I-K_T^PH_T^P)A_{T-1}\widehat{\Sigma}_{T-1}^P\big)\label{step4_inte7}\\
    &&-2\Tr\big(((I-K_T^PH_T^P)A_{T-1}\Pi_{T-1}^P)^\top Q_T^PA_{T-1}\widetilde{\Sigma}_{T-1}^{(P,E)}\big)\label{step4_inte11}\\
    &&+ \Tr\Big( Q_T^P\big(\widehat{\Sigma}^{P}_{T}\big)^{-}\Big) + \Tr\Big( (K_T^P)^\top Q_T^P K_T^P  H_T^P A_{T-1}(\widehat{\Sigma}^P_{T-1}-\widetilde{\Sigma}_{T-1}^{(P,E)})(\Pi_{T-1}^P)^\top A_{T-1}^\top(H_T^P)^\top\Big)\nonumber\\
    && -2\Tr(A_{T-1}^\top Q_T^P K_T^PH_T^PA_{T-1}\widehat{\Sigma}_{T-1}^P) + \Tr(A_{T-1}^\top Q_T^P A_{T-1}\widehat{\Sigma}_{T-1}^P)\label{step4_inte8}.
\end{eqnarray}
Now adding terms in \eqref{step4_inte7} and \eqref{step4_inte8} together we have
\begin{eqnarray*}
    &&- 2\Tr\big(A_{T-1}^\top Q_T^P(I-K_T^PH_T^P)A_{T-1}\widehat{\Sigma}_{T-1}^P\big)-2\Tr(A_{T-1}^\top Q_T^P K_T^PH_T^PA_{T-1}\widehat{\Sigma}_{T-1}^P)\\
    &&+ \Tr(A_{T-1}^\top Q_T^P A_{T-1}\widehat{\Sigma}_{T-1}^P)\\
    &=& - \Tr(A_{T-1}^\top Q_T^P A_{T-1}\widehat{\Sigma}_{T-1}^P)
\end{eqnarray*}
Also, adding \eqref{step4_inte9}, \eqref{step4_inte10}, and \eqref{step4_inte11} together leads to
\begin{eqnarray*}
    && \Tr\big(((I-K_T^PH_T^P)A_{T-1}\Pi_{T-1}^P)^\top Q_T^P(I-K_T^PH_T^P)A_{T-1}\big(\widetilde{\Sigma}_{T-1}^{(P,E)}-\widehat{\Sigma}_{T-1}^P\big)\Big) \\
    &&+2\Tr\big(((I-K_T^PH_T^P)A_{T-1}\Pi_{T-1}^P)^\top Q_T^P A_{T-1}\widehat{\Sigma}_{T-1}^P\big) \\
    &&-2\Tr\big(((I-K_T^PH_T^P)A_{T-1}\Pi_{T-1}^P)^\top Q_T^PA_{T-1}\widetilde{\Sigma}_{T-1}^{(P,E)}\big)\\
    &=& -\Tr\big(((I-K_T^PH_T^P)A_{T-1}\Pi_{T-1}^P)^\top Q_T^PK_T^PH_T^PA_{T-1}\big(\widetilde{\Sigma}_{T-1}^{(P,E)}-\widehat{\Sigma}_{T-1}^P\big)\Big) \\
    &&+\Tr\big(((I-K_T^PH_T^P)A_{T-1}\Pi_{T-1}^P)^\top Q_T^P A_{T-1}\big(\widehat{\Sigma}_{T-1}^P-\widetilde{\Sigma}_{T-1}^{(P,E)}\big)\big)
\end{eqnarray*}
Therefore, the constant terms on LHS can be further simplified as
\begin{eqnarray}
&&\text{Constant terms on LHS}\nonumber\\
&=& (2b) + (1a) + (3a)\nonumber\\
 &&-\Tr\big(((I-K_T^PH_T^P)A_{T-1}\Pi_{T-1}^P)^\top Q_T^PK_T^PH_T^PA_{T-1}\big(\widetilde{\Sigma}_{T-1}^{(P,E)}-\widehat{\Sigma}_{T-1}^P\big)\Big) \nonumber\\
 &&+\Tr\big(((I-K_T^PH_T^P)A_{T-1}\Pi_{T-1}^P)^\top Q_T^P A_{T-1}\big(\widehat{\Sigma}_{T-1}^P-\widetilde{\Sigma}_{T-1}^{(P,E)}\big)\big)\nonumber\\
&&- \Tr(A_{T-1}^\top Q_T^P A_{T-1}\widehat{\Sigma}_{T-1}^P)+ \Tr\Big( Q_T^P\big(\widehat{\Sigma}^{P}_{T}\big)^{-}\Big)\nonumber\\
&&+ \Tr\Big( (K_T^PH_T^PA_{T-1}\Pi_{T-1}^P)^\top Q_T^P K_T^P  H_T^P A_{T-1}(\widehat{\Sigma}^P_{T-1}-\widetilde{\Sigma}_{T-1}^{(P,E)})\Big)\nonumber\\
&=&(2b) + (1a) + (3a)\nonumber\\
 &&-\Tr\big((A_{T-1}\Pi_{T-1}^P)^\top Q_T^PK_T^PH_T^PA_{T-1}\big(\widetilde{\Sigma}_{T-1}^{(P,E)}-\widehat{\Sigma}_{T-1}^P\big)\Big) \nonumber\\
 &&+\Tr\big(((I-K_T^PH_T^P)A_{T-1}\Pi_{T-1}^P)^\top Q_T^P A_{T-1}\big(\widehat{\Sigma}_{T-1}^P-\widetilde{\Sigma}_{T-1}^{(P,E)}\big)\big)\nonumber\\
&&- \Tr(A_{T-1}^\top Q_T^P A_{T-1}\widehat{\Sigma}_{T-1}^P)+ \Tr\Big( Q_T^P\big(\widehat{\Sigma}^{P}_{T}\big)^{-}\Big)\nonumber\\
&=& (2b) + (1a) + (3a)\nonumber\\
 &&+\Tr\big((A_{T-1}\Pi_{T-1}^P)^\top Q_T^P A_{T-1}\big(\widehat{\Sigma}_{T-1}^P-\widetilde{\Sigma}_{T-1}^{(P,E)}\big)\big)\nonumber\\
&&- \Tr(A_{T-1}^\top Q_T^P A_{T-1}\widehat{\Sigma}_{T-1}^P)+ \Tr\Big( Q_T^P\big(\widehat{\Sigma}^{P}_{T}\big)^{-}\Big)\nonumber\\
&=&(2b) + (1a) + (3a) +\Tr(\Gamma_{T-1}^\top Q_T^P \Gamma_{T-1} W)\label{LHS_simplify}
\end{eqnarray}
where the second last equality holds since $\big(\widehat{\Sigma}_{T-1}^P-\widetilde{\Sigma}_{T-1}^{(P,E)}\big)(\Pi_{T-1}^P)^\top=\Pi_{T-1}^P\big(\widehat{\Sigma}_{T-1}^P-\widetilde{\Sigma}_{T-1}^{(P,E)}\big)^\top$, and the last equality holds by \eqref{eqn:step3_inte3}. 

Combining \eqref{LHS_simplify} and \eqref{RHS_simplify}, we can see that the constant terms on LHS and RHS are the same. Recall that we have already matched the terms that are quadratic in $\widehat{x}_{T-1}^P$ on both sides. Therefore, the tower property given in \eqref{tower_pro} holds.

\section{Bargaining Game with $n_1\ge 2$}\label{app:experiment}

In Section \ref{sec:experiment} we examined a bargaining model where the players only have noisy observations of the true value of the good ($n_1=1$), and they can fully recover the opponent's state estimate. In this section we introduce a more complex setting, where the true value of the good depends on a set of factors and the players can only observe noisy versions of the factors. We will first construct the bargaining model which involves high dimensional factors, and then focus on the case when $n_1=2$. In contrast to Section \ref{sec:experiment}, here players cannot fully recover their opponent's state estimate. We will finally present some numerical results to show the effect of observation noise and information corrections. 

\paragraph{Dynamics of the Value of the Good When $n_1>1$.} Assume that 
that the value of the good takes the form 
\begin{eqnarray}\label{val_dyna_factors}
    p_t = \langle \theta, \xi_t \rangle,
\end{eqnarray}
with  $\xi_t \in \mathbb{R}^{n_1}$ a set of factors  that determines the value of the good with coefficient $\theta \in \mathbb{R}^{n_1}$. For the value of common commodities, the factors could include weather, government policies, international events, consumer preferences, shifting input costs, and  supply and demand imbalance.
We assume the factors follow a simple model:
   \begin{equation*}
    \xi_{t+1} = \xi_t + w_t,
   \end{equation*}
 where $\{w_t\}_{t=0}^{T-1}$ is a sequence of IID Gaussian random variables with zero mean and covariance $\overline{W}\in \mathbb{R}^{n_1\times n_1}$.
    
Both the buyer and the seller do not have access to the true value of the good nor the precise value of the factors. Instead, they observe a noisy version of the factors using their private information. At time $t=0$, player $i$ ($i=B,S$) believes that the initial factor signal:
\begin{eqnarray}\label{eq:initial_belief_B}
    {\xi}_0\sim \mathcal{N}(\widehat{{\xi}}_0^{i},W^i_0),
\end{eqnarray}
and thereafter player $i$ observes the following noisy factor signal:
    \begin{eqnarray}\label{eqn:B_obser_2}
    z_{t+1}^i = {\xi}_{t+1} + \,w^i_{t+1},\quad w^i_{t+1}\sim \mathcal{N}(0,G^i),\quad t=0,1,\cdots,T-1,
    \end{eqnarray}
    where $\{w^i_{t}\}_{t=1}^{T-1}$ is a sequence of IID random variables, and $\{w^B_{t}\}_{t=1}^{T-1}$ and $\{w^S_{t}\}_{t=1}^{T-1}$ are independent.
    
We Note that this case does not degenerate to the $n_1=1$ case as players have different noisy factor signals leading to different value estimates

\paragraph{The Bargaining Model when  \texorpdfstring{$n_1=2$}{n1=2}.}

Now we focus on the case where the value of the good depends on a set of two factors and players can only observe a noisy version of the factors. The dynamics of the value of the good and the noisy signals are defined in \eqref{val_dyna_factors}-\eqref{eqn:B_obser_2}. In contrast to the previous section, here the players are not able to fully recover their opponent's state estimate.

We set the coefficient $\theta=(\theta_1,\theta_2)^\top$ and the model parameters to be, for $ t=0,1,\ldots, T-1$
\[
A_t = I,\,\, B_t^B =
\begin{bmatrix}
0 \\
0 \\
1\\
0
\end{bmatrix},\quad
B_t^S =
\begin{bmatrix}
0 \\
0\\
0 \\
1
\end{bmatrix},
\quad  W=
\begin{bmatrix}
    \overline{W}^1 & 0 & 0 & 0\\
    0 & \overline{W}^2 & 0 & 0\\
    0 & 0 & \overline{W}^B &0 \\
    0 & 0 & 0 & \overline{W}^S
\end{bmatrix},\quad
Q_t^B=Q_t^S=0.
\]
and
\[
Q_T^B=
\begin{bmatrix}
\beta_B(1+\delta_B)^2\theta_1^2 & \beta_B(1+\delta_B)^2\theta_1\theta_2 & -\beta_B(1+\delta_B)\theta_1 & 0 \\
\beta_B(1+\delta_B)^2\theta_1\theta_2 &-\beta_B(1+\delta_B)^2\theta_2^2 & -\beta_B(1+\delta_B)\theta_2 & 0 \\
-\beta_B(1+\delta_B)\theta_1 & -\beta_B(1+\delta_B)\theta_2 & \alpha_B+\beta_B & -\alpha_B\\
0 & 0 & -\alpha_B & \alpha_B
\end{bmatrix},
\]
\[
Q_T^S=
\begin{bmatrix}
\beta_S(1+\delta_S)^2\theta_1^2 & \beta_S(1+\delta_S)^2\theta_1\theta_2 & 0 & -\beta_S(1+\delta_S)\theta_1\\
\beta_S(1+\delta_S)^2\theta_1\theta_2 & \beta_S(1+\delta_S)^2\theta_2^2 & 0 & -\beta_S(1+\delta_S)^2\theta_2\\
0 & 0 & \alpha_S &  -\alpha_S\\
-\beta_S(1+\delta_S)\theta_1 & - \beta_S(1+\delta_S)\theta_2 &  -\alpha_S & \alpha_S + \beta_S
\end{bmatrix}.
\]
Also we have $H_t^i=I$ for $i=S,B$. 

For comparison we let $\alpha_B, \alpha_S$, $\beta_B, \beta_S$, $\delta_B,\delta_S$, $T$, and the penalty function to be the same as in Section. We also set $\theta_1 = \theta_2 = 1$. For the initial state we set $\xi_0=(30,20)^\top$, $x_0^B=10$, $x_0^S=90$. We also set $\overline{W}^1=\overline{W}^2=4.5$ for the noise in the dynamics of the factors, and $\overline{W}^B=\overline{W}^S=10^{-12}$.

To see the effect of the observation noise, we let the buyer have a much noisier observation of the true price by setting 
\[
G^B=
\begin{bmatrix}
    50 & 0 \\
    0 & 50
\end{bmatrix},\quad
G^S=
\begin{bmatrix}
    0.5 & 0 \\
    0 & 0.5
\end{bmatrix}.
\]
For the players' belief about the initial state, we set $\widehat{x}_0^B=(25,15)^\top$ and $\widehat{x}_0^S=(31,20)^\top$ with \[
W_0^B=
\begin{bmatrix}
    50 & 0 \\
    0 & 50
\end{bmatrix},
W_0^S=
\begin{bmatrix}
    0.5 & 0 \\
    0 & 0.5
\end{bmatrix},
\]
thus the buyer has a far more inaccurate guess of the initial state.

\paragraph{Effect of Observation Noise.} As for the case when $n_1=1$, the seller has a more accurate state estimate since he receives less noisy signals. The behaviour of the players is similar to that in the full information case.

  \begin{figure}[H]
  \centering
  \begin{subfigure}[b]{0.35\textwidth}
    \includegraphics[width=\textwidth]
    {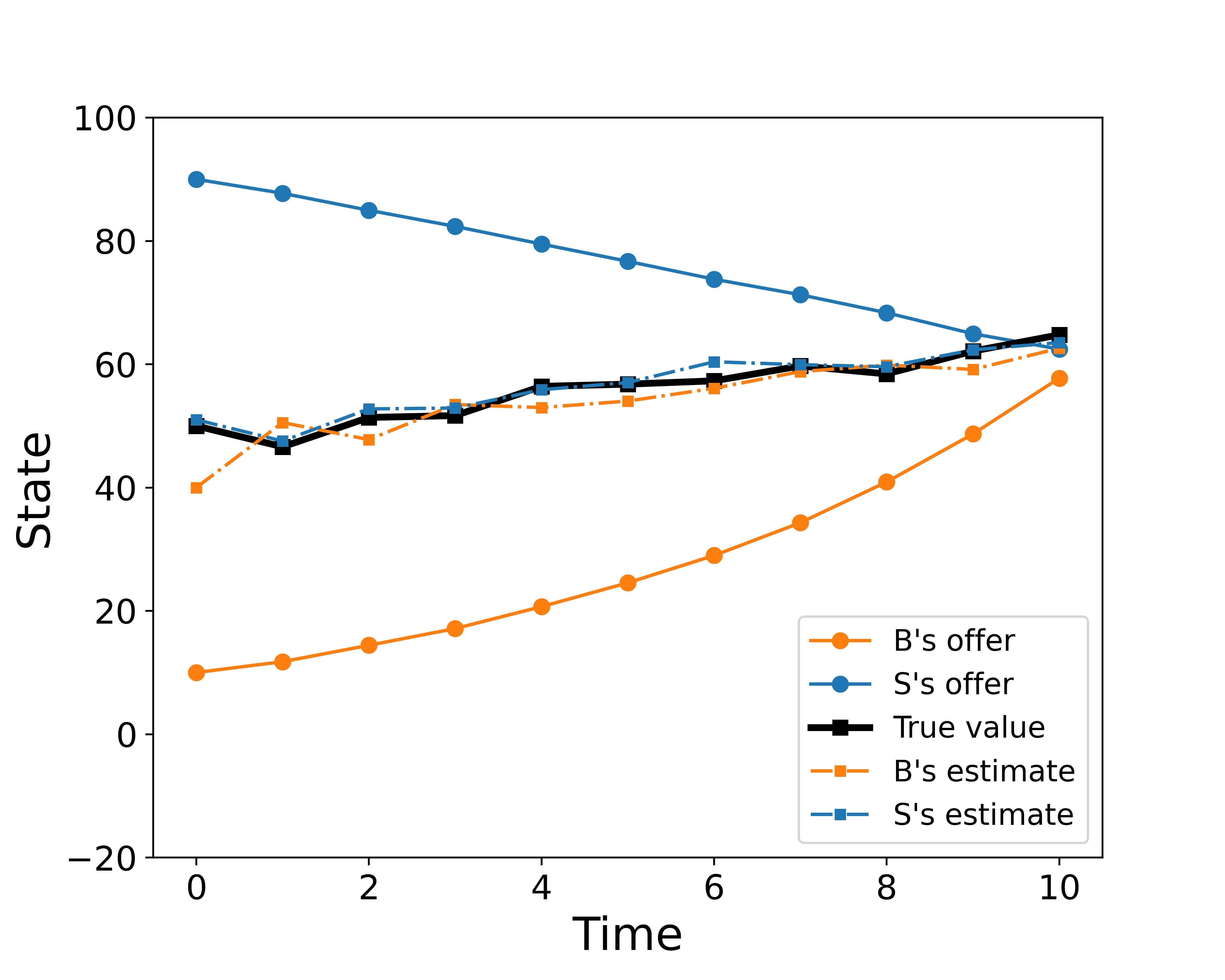}
    \end{subfigure}
      \begin{subfigure}[b]{0.35\textwidth}
    \includegraphics[width=\textwidth]{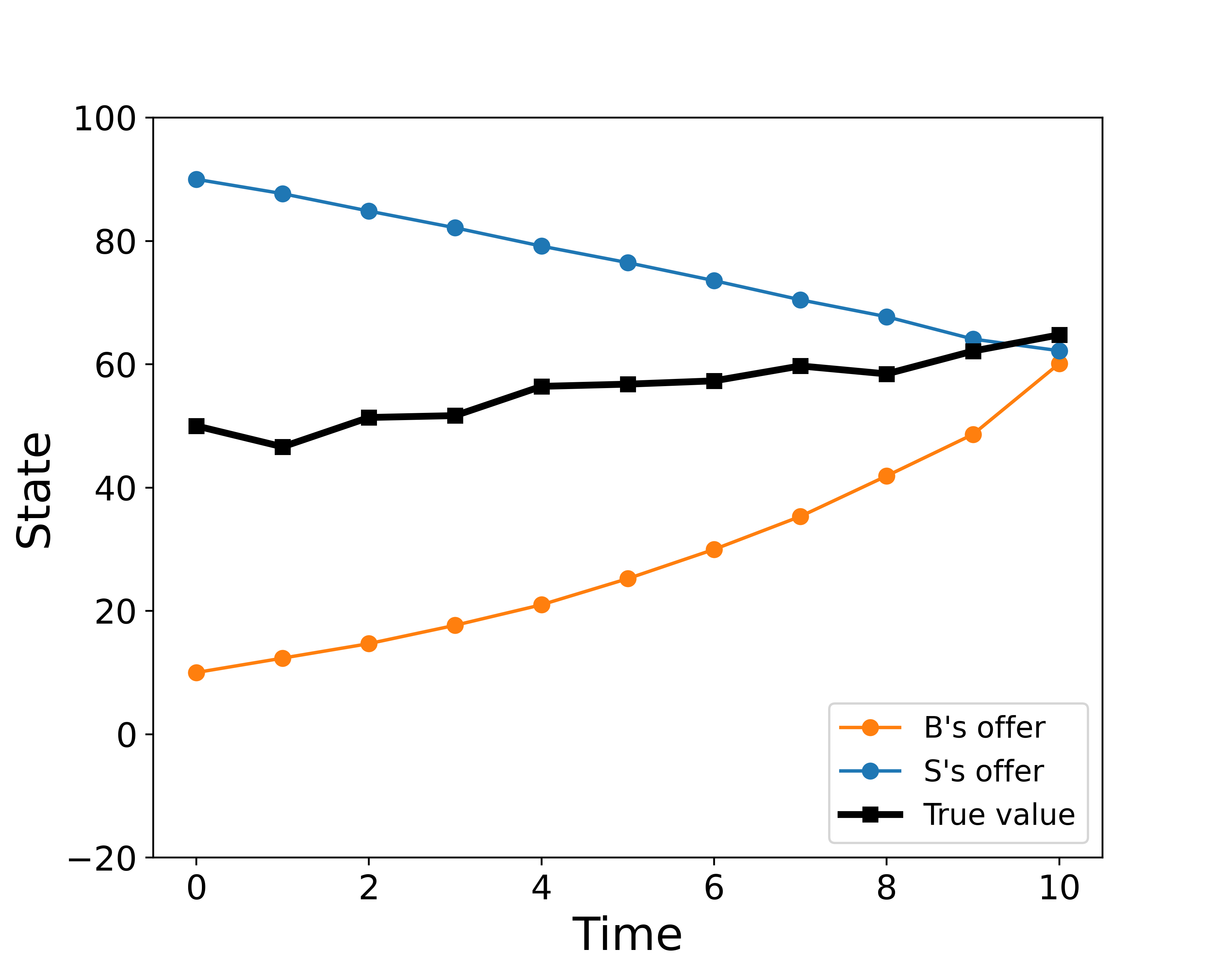}
  \end{subfigure}
  \caption{\label{fig:4D_effect_obs_noise}Comparison between the full observation (right) and partial observation (left) cases.}
\end{figure}
 
\paragraph{Effect of Information Corrections.} With information corrections the buyer's estimate of the value is less affected by the noisy observations (see Figure \ref{fig:4D_effect_info_corr}) and players are more likely to achieve an agreement in each of the asymmetric and symmetric case (see Table \ref{tab:4D_bar_outcomes}). Compared with the case when $n_1=1$, the gap between the number of agreements achieved in the asymmetric and symmetric (inaccurate) case is larger, since players receive noisy signals from the different factors.

\begin{figure}[H]
\centering
  \begin{subfigure}[b]{0.35\textwidth}
    \includegraphics[width=\textwidth]{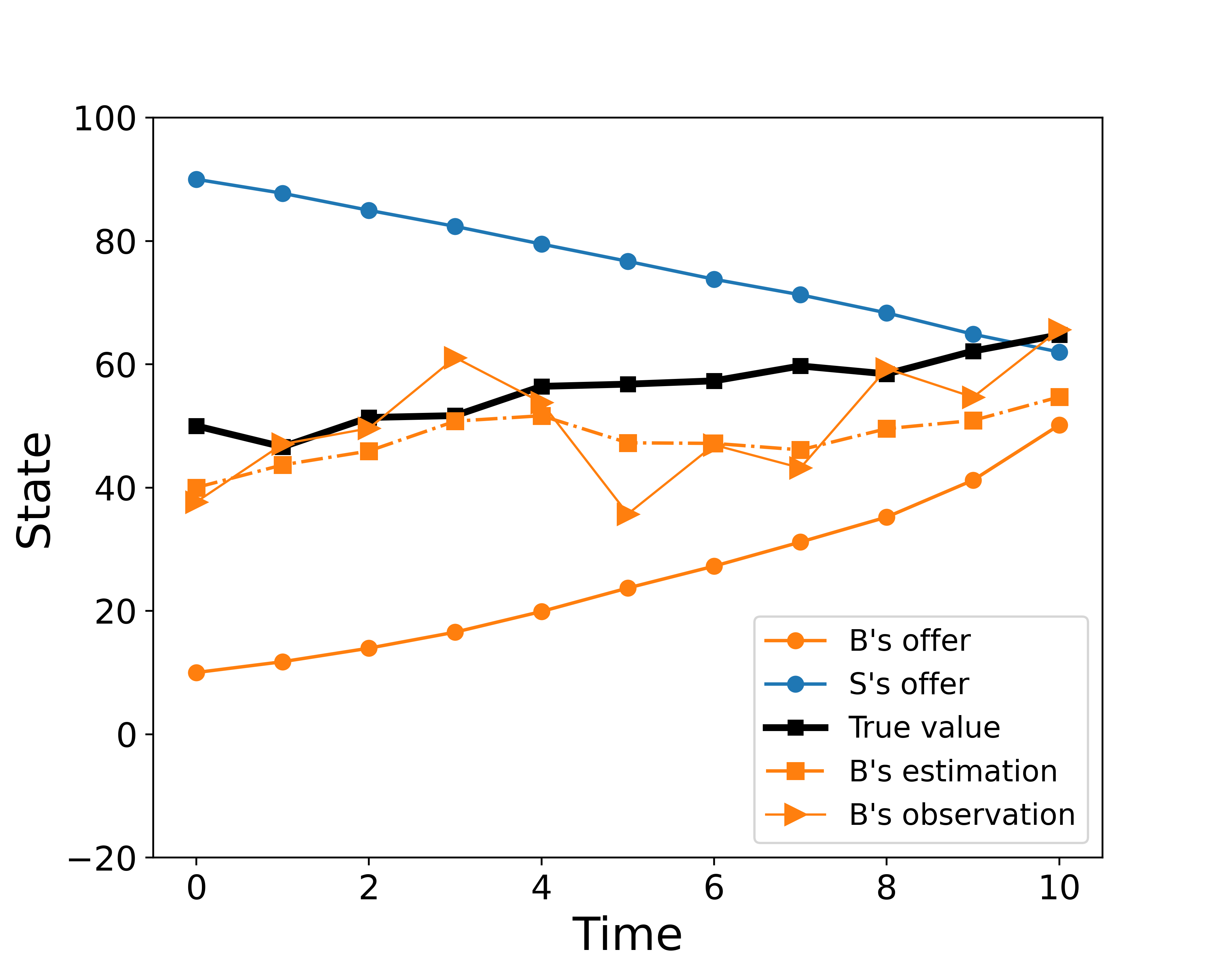}
  \end{subfigure}
  \begin{subfigure}[b]{0.35\textwidth}
    \includegraphics[width=\textwidth]{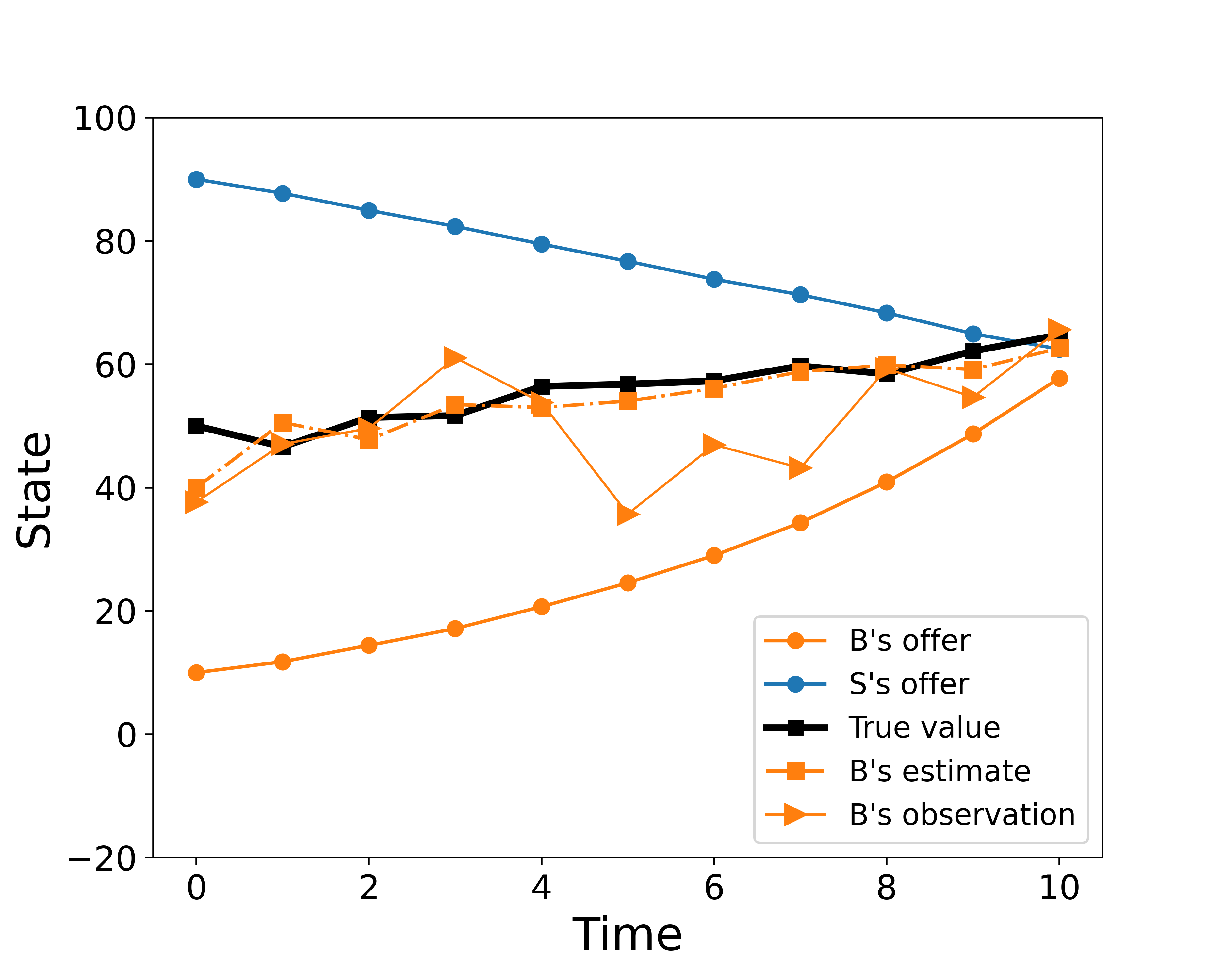}
  \end{subfigure}\caption{\label{fig:4D_effect_info_corr}The buyer's price estimate with information corrections (right) and without information corrections (left).}
\end{figure}


\begin{table}[H] 
 \centering
        \begin{tabular}{l*{4}{c}r}
       & Asymmetric & Symmetric (``accurate'') & Symmetric (inaccurate)    \\
\hline\hline
 With IC        & 329 & 439 & 315  \\ \hline
Without IC & 237 & 440 & 207   \\
\end{tabular}
       \caption{Number of agreements achieved in 500 experiments with and without information corrections (IC).}
       \label{tab:4D_bar_outcomes}
   \end{table}


\end{document}